\newcommand{\dom}{\mathcal D}
\newcommand{\obs}{\mathcal O}
\def\beq{\begin{equation}}
\def\eeq{\end{equation}}
\newcommand{\billiards}{\mathbf{B}}
\newcommand{\Z}{{\mathbb Z}}
\newcommand{\R}{{\mathbb R}}
\newcommand{\T}{{\mathbb T}}
\newcommand{\N}{{\mathbb N}}
\theoremstyle{plain}
\newtheorem{theorem}{Theorem}[section]
\newtheorem{lemma}[theorem]{Lemma}
\newtheorem{prop}[theorem]{Proposition}
\newtheorem{corollary}[theorem]{Corollary}
\newtheorem{claim}[theorem]{Claim}
\newtheorem*{main thm}{Main Theorem}
\newtheorem*{main thm bis}{Main Theorem (alternate version)}
\newtheorem{theoalph}{Theorem}
\newtheorem{coralph}[theoalph]{Corollary}
\theoremstyle{definition}
\newtheorem{remark}[theorem]{Remark}
\newtheorem{defi}[theorem]{Definition}
\newtheorem{notation}[theorem]{Notation}
\begin{document}
\numberwithin{equation}{section}

\title{Smooth conjugacy classes of 3D Axiom A flows}

\author[Anna Florio]{Anna Florio$^1$}
\address{$^1$Sorbonne Universit\'e, Universit\'e de Paris, CNRS, Institut de Math\'ematiques de Jussieu-Paris Rive Gauche, F-75005 Paris, France \& CEREMADE, Univ. Paris Dauphine, Place du Mar\'echal de Lattre de Tassigny, F-75016 Paris, France}
\email{anna.florio@imj-prg.fr}

\author[Martin Leguil]{Martin Leguil$^2$}
\address{$^2$Laboratoire Ami\'enois de Math\'ematiques Fondamentales et Appliqu\'ees, CNRS-UMR 7352, Universit\'e de Picardie Jules Verne, 33 rue Saint Leu, 80039 Amiens cedex 1, France}
\email{martin.leguil@u-picardie.fr}

\maketitle

\begin{abstract}
	We show a rigidity result for $3$-dimensional contact \emph{Axiom A} flows: given two $\mathrm{3D}$ contact Axiom A flows $\Phi_1,\Phi_2$ whose restrictions $\Phi_1|_{\Lambda_1},\Phi_2|_{\Lambda_2}$ to basic sets $\Lambda_1,\Lambda_2$ are orbit equivalent, we prove that if periodic orbits in correspondence have the same length, then the conjugacy is as regular as the flows and respects the contact structure, extending a previous result due to Feldman-Ornstein \cite{FelOrn}. Some of the ideas are reminiscent of the work of Otal \cite{Otal}. As an application, we show that the billiard maps of two open dispersing billiards without eclipse and with the same marked length spectrum are \textit{smoothly} conjugated. 
\end{abstract}

\tableofcontents

\section{Introduction, statement of the results}

The concept of \emph{rigidity} arises in several ways in dynamics; one of them is the problem of knowing when two smooth systems which are topologically conjugated are actually \emph{smoothly} conjugated. It appears for instance in the framework of diffeomorphisms of the circle. In \cite{Arn} Arnold proved the first $\mathcal{C}^\omega$-linearization result. More precisely, he showed that an analytic diffeomorphism with Diophantine rotation number $\alpha$  and sufficiently close to the rotation $R_\alpha$ is analytically conjugated to $R_\alpha$. A global result in the  $\mathcal{C}^\infty$ category  is due to Herman, in \cite{Her}, where he also proved the optimality of the Diophantine condition in the smooth case; see also 
\cite{Yoccoz},  \cite{KhaSin} for related works. 

For low dimensional \emph{Anosov} systems, the question of rigidity has been investigated in many works, see for instance the series of papers by de la Llave, Marco and Moriy\'on  
\cite{MM,InvII,InvIII,InvIV}, \cite{LMM1}, and \cite{deLL}. While renormalization is one of the main tools behind the study of rigidity for circle diffeomorphisms, the approach for \emph{hyperbolic} systems is quite different. Indeed, for such systems, \emph{periodic orbits} are abundant, and each of them carries with itself an obstruction to smooth conjugacy, namely the associated eigenvalues of the differential. In the aforementioned works of de la Llave-Marco-Moriy\'on, it is shown that those obstructions are actually complete invariants for smooth conjugacy classes. The Anosov assumption can be relaxed, namely, we may consider systems where hyperbolicity is only observed on a subset of the phase space. In particular, when the \emph{non-wandering set} 
is hyperbolic, this leads to the notion of \emph{Axiom A} systems. In \cite{PRarxiv}, Pinto-Rand showed that Lipschitz conjugacy classes of hyperbolic basic sets on surfaces, which possess an invariant measure absolutely continuous with respect to Hausdorff measure, can be characterised in many ways, in particular, in terms of   eigenvalues at periodic points. Let us also mention the works \cite{PRrig} and \cite{BedFish}, where other rigidity results for hyperbolic sets have been obtained. In the context of expanding maps in any dimension, Gogolev and Rodriguez-Hertz \cite{RHG} have shown that, open and densely, smooth conjugacy classes are determined by the value of the Jacobian of the return maps at periodic points. 
 
 Let us now say a few words on rigidity questions in \emph{geometric}  frameworks. A natural setting is that of hyperbolic \emph{geodesic flows}. In this case, the general hope is that periodic data, in particular,  the \emph{length spectrum}, may be sufficient to characterize not only smooth conjugacy classes, but also to recover some \emph{geometric} information. The question of  \emph{spectral rigidity}  asks whether the (marked) length spectrum is sufficient to determine the metric up to isometry.  There exist various instances of this problem, both local and non-local. 
Guillemin-Kazhdan \cite{GuilKazh} have shown that compact negatively curved surfaces are spectrally rigid in the \emph{deformative} sense: a family $(g_s)_{s \in (0,1)}$ of isospectral negatively curved metrics is \emph{isometric}, that is, for each $s \in (0,1)$, there exists a diffeomorphism $\phi_s$ such that $g_s=\phi_s^* g_0$. Later, Paternain-Salo-Uhlmann \cite{PaternainSU} proved that any Anosov surface is spectrally rigid in the deformative sense. 
 Let us recall that for hyperbolic surfaces, periodic trajectories can be  naturally marked by  free homotopy classes.  The question of spectral rigidity for hyperbolic surfaces was addressed by
Otal \cite{Otal} and independently by Croke \cite{Croke}, who obtained the following \emph{global} result: two negatively curved metrics $g_0$ and $g_1$
 on a closed surface   with the same \emph{marked length spectrum} are isometric (see  also \cite{CrSh} for the multidimensional case). Recently, Guillarmou-Lefeuvre \cite{GuiLef} proved that in all dimensions, the marked length spectrum of a Riemannian manifold with Anosov geodesic flow and non-positive curvature locally determines the metric. See also the recent work \cite{RHG2} where a sharpened version of Otal and Croke's result was obtained. Other works have also investigated the case where the hyperbolic set is not the whole manifold. For instance, in \cite{Guill}, Guillarmou considers a smooth one-parameter family $(g_s)_{s \in (0,1)}$  of metrics on a smooth connected compact manifold with strictly convex boundary. When the metrics have no conjugate points, and the trapped set is a hyperbolic set for the geodesic flow, he proved that if all the metrics in the family are lens equivalent, then they are isometric. Following this work, Lefeuvre \cite{Lefeu} studied the X-ray transform on a smooth compact connected Riemannian manifold with hyperbolic trapped set. Other results in this direction have been recently obtained also by Chen, Erchenko and Gogolev in \cite{ChenErchGogol}.

Another setting where rigidity questions for the length spectrum have been investigated is the case of \textit{planar billiards}, in particular, for convex domains. One of the first results was obtained by Colin de Verdi\`ere \cite{DeVer}, who  established dynamical spectral rigidity for convex domains with analytic boundary and a $\Z_2\times \Z_2$-symmetry. De Simoi-Kaloshin-Wei \cite{dSKW} have proved dynamical spectral rigidity for $\Z_2$-symmetric strictly convex domains close to a circle; see also the recent work \cite{DeSAyub} by Ayub-De Simoi for ellipses of eccentricity smaller than $0.30$. Moreover, recently, for smoothly conjugate billiard maps of Birkhoff billiards, Kaloshin-Koudjinan \cite{KK} study rigidity in the form of Marvizi-Melrose invariants. Let us also recall that the question of spectral rigidity for convex billiards can be considered for other kinds of spectra as well: one of the most famous examples concerns the \textit{Dirichlet (or Neumann) spectrum}, which has been investigated at length, in particular, in a series of works by Zelditch \cite{Zel} and Hezari-Zelditch \cite{HeZ1,HeZ2,HeZ3,HeZ4}\dots, but also in many others. 

Yet, even more than convex billiards, hyperbolic billiards, in particular, \emph{dispersing billiards} are the most natural analogue of hyperbolic geodesic flows; indeed, although convex billiards may exhibit some hyperbolicity, for dispersing billiards, hyperbolicity is present on the whole phase space. The case of \emph{Sinai billiards} is very interesting, due to the abundance of periodic orbits; 
yet, the complicated structure of the set of periodic orbits as well as the presence of singularities make them hard to deal with. Many works have been dedicated to the study of \emph{open dispersing billiards} 
(see \cite{GARi,LoMar,Morita1991,Morita2004,Morita2007,Sto} for instance). Recall that the dynamics of open dispersing billiards is of \emph{Axiom A } type, and that their non-wandering set can be described symbolically (see \cite{Morita1991} for instance), which allows to define a marked length spectrum.  
The rigidity of scattering lengths and travelling times has been investigated in a series of works by Noakes, Stoyanov, and Petkov (see \cite{STNO,NoST,NoSto,StoLu} and also the book \cite{PetSto2}). In particular, \textit{lens rigidity} was established by Noakes-Stoyanov \cite{STNO,NoSto}: open dispersing billiards in $\R^d$, $d \geq 2$, are uniquely determined by the travelling times of billiard orbits and also by their  scattering length spectra. It is interesting to observe that as in the present work, the conjugacy between the billiard flows of two billiards with the same spectral data plays an important role: yet, Noakes-Stoyanov consider the conjugacy between the billiard flows \emph{outside the trapped set}, while here, we study the conjugacy precisely \emph{on the trapped set}; this is due to the fact that these works deal with different spectra. 
 
Let us conclude this introduction by mentioning recent rigidity results for hyperbolic billiards in terms of the \textit{length spectrum}. In \cite{KalChenZh}, Chen-Kaloshin-Zhang established the dynamical spectral rigidity of piecewise analytic Bunimovich stadia and squash type stadia. 
In \cite{DKL}, De Simoi-Kaloshin and the second author solved the question of marked length spectral determination for non-eclipsing open dispersing billiards with analytic boundary and two partial symmetries, 
under some mild non-degeneracy condition. Observe that in the $\mathcal{C}^k$ category, $k \in \N_{\geq 3} \cup \{+\infty\}$, the marked length spectrum is insufficient to fully determine the geometry of such tables; indeed,  periodic orbits are not dense in the whole phase space, so it is possible to deform the geometry of the arcs of the table which are not ``seen'' by the trapped set, i.e., which come from ``gaps'' of the projection on the table of the Cantor set on which we have information through periodic orbits.\\

In the present work,  we generalize the result of Feldman-Ornstein \cite{FelOrn} from contact Anosov flows on 3-manifolds to contact Axiom A flows on 3-manifolds. More precisely, equality of the length data allows us to upgrade an orbit equivalence to a 
flow conjugacy as regular as the flows, see Theorem \ref{thm princ dyn}. We apply this result to billiards exhibiting some hyperbolicity, and obtain a \textit{dynamical} rigidity result: for $k \geq 3$, we show that two $\mathcal{C}^k$ open dispersing billiards\footnote{Actually, the same result also holds for more general billiards, see Theorem \ref{theorem main billi}.} whose billiard maps are topologically conjugated on some horseshoe and have the same length data are actually \textit{smoothly} conjugated, in a \textit{canonical} way 
(see Theorem \ref{theorem main billi} and Corollary \ref{spectral rig cor}). In a previous version of this work, we were discussing geometric implications of the smoothness of the conjugacy, in connection with the question of spectral rigidity; yet, an error was found  in this part by Jacopo De Simoi. 

 

\subsection{Preliminaries}

Let $\Phi=(\Phi^t)_{t \in \R}$ be a continuous flow defined on a manifold $M$. For each point $x \in M$, we denote by $\mathcal{O}_\Phi(x):=\{\Phi^t(x)\}_{t \in \R}$ the $\Phi$-orbit of $x$. We denote by $\mathrm{Fix}(\Phi):=\{x \in M: \Phi^t(x)=x\text{ for all }t \in \R\}$ the set of \emph{fixed points} of $\Phi$, and we denote by $\mathrm{Per}(\Phi):=\{y \in M: \Phi^{T}(y)=y\text{ for some }T>0\}$ the set of \emph{periodic points} of $\Phi$; for any $x \in \mathrm{Per}(\Phi)$, we let $T_\Phi(x)=T_\Phi (\mathcal{O}_\Phi(x))>0$ be the prime period of $x$. Recall that the \emph{non-wandering set} $\Omega(\Phi)\subset M$ is the set of points $x\in M$ such that for any open set $U \ni x$, any $T_0>0$, there exists $T>T_0$ such that $\Phi^T (U) \cap U \neq \emptyset$. When $\Phi$ is a differentiable flow on some smooth manifold $M$, we denote by $X_\Phi(\cdot):=\frac{d}{dt}|_{t=0}\Phi(\cdot,t)$ its \emph{flow vector field}. 

In the following, given an integer $n \geq 1$, and $\beta \in (0,1)$, we say that a function $f$ is of class $\mathcal{C}^{n,\beta}$ if $f$ is $\mathcal{C}^{n}$, and its $n^{\mathrm{th}}$ derivative is $\beta$-H\"older continuous. 

\begin{defi}[Orbit equivalence]
	For $i=1,2$, let $\Phi_i=(\Phi_i^t)_{t \in \R}$ 
	be a flow defined on a manifold $M_i$, and let $\Lambda_i\subset M_i$ be a $\Phi_i$-invariant subset. 
	We say that the flows $\Phi_1,\Phi_2$ are \emph{orbit equivalent} on $\Lambda_1,\Lambda_2$ if there exists a  homeomorphism $\Psi \colon \Lambda_1 \to \Lambda_2$ such that for some continuous function $\theta \colon \Lambda_1 \times \R \to \R$, we have for each $x \in \Lambda_1$:
	\begin{itemize}
		\item $\theta (x,0)=0$, and $\theta (x,\cdot)$ is an increasing $\mathcal{C}^{1,\beta}$ homeomorphism of $\R$, for some $\beta\in (0,1)$;   
		\item $\Psi \circ \Phi_1^t(x)=\Phi_2^{\theta (x,t)} \circ \Psi(x)$, for all $t \in \R$.
	\end{itemize}  
In other words, $\Psi$ sends $\Phi_1$-orbits to $\Phi_2$-orbits: 
$$
\Psi (\mathcal{O}_{\Phi_1}(x))=\mathcal{O}_{\Phi_2} (\Psi(x)),\quad \text{for all } x \in \Lambda_1.
$$
Recall that $\Psi$ is automatically $\mathcal{C}^{\delta}$ for some  $\delta\in (0,1)$,  if $\Lambda_1,\Lambda_2$ are compact hyperbolic sets (see Katok-Hasselblatt \cite[Theorem 19.1.2]{HaKa}). 

Moreover, we say that $\Psi$ is 
\emph{iso-length-spectral} if 
$$
T_{\Phi_1}(x)=T_{\Phi_2}(\Psi(x)),\quad \forall\, x\in \mathrm{Per}(\Phi_1)\cap \Lambda_1,
$$
i.e., the flows $\Phi_1,\Phi_2$ have the same periodic length data.

If $M_1,M_2$ are smooth, and $\Phi_1,\Phi_2$ are differentiable flows, we abbreviate as $X_i:=X_{\Phi_i}$ the flow vector field, for $i =1,2$, and we say that $\Psi$ is \emph{differentiable along 
	$\Phi_1$-orbits} (in $\Lambda_1$) 
if the Lie derivative $$
\Lambda_1\ni x \mapsto L_{X_1}  \Psi(x):=\lim_{t \to 0} \frac{1}{t}\big(\Psi\circ \Phi_1^t(x)-\Psi(x)\big)\in \R X_2\circ \Psi(x)
$$ 
is a well-defined continuous function. 
\end{defi}

\begin{defi}[Adapted contact form]\label{adapted contact form}
	Given a smooth (connected) $3$-manifold $M$, recall that a \emph{contact form} is a smooth differential one-form that satisfies the \emph{non-integrability condition} $\alpha\wedge d \alpha \neq 0$; without loss of generality, we may assume that $\alpha\wedge d \alpha > 0$.    
	
	Let $k \geq 2$, and let $\Phi=(\Phi^t)_{t \in \R}$ be a $\mathcal{C}^k$ Axiom A flow defined on a smooth $3$-manifold $M$. Given a basic set $\Lambda\subset M$ for $\Phi$, we say that a contact form $\alpha$ is \emph{adapted to $\Lambda$} if it satisfies the following Reeb conditions:
	\begin{enumerate}[(a)]
		\item\label{condition un} $\imath_{X}\alpha|_{\Lambda}\equiv 1$;
		\item\label{condition deux} $X|_{\mathcal{W}_\Phi^{cs}(\Lambda)} \in \ker d\alpha|_{\mathcal{W}_\Phi^{cs}(\Lambda)}$ and $X|_{\mathcal{W}_\Phi^{cu}(\Lambda)} \in \ker d\alpha|_{\mathcal{W}_\Phi^{cu}(\Lambda)}$.
	\end{enumerate}
\end{defi}

In the following, we fix a  $\mathcal{C}^\infty$ smooth Riemannian manifold $M$, and we consider a  $\mathcal{C}^2$  flow  $\Phi=(\Phi^t)_{t \in \R}$ on $M$.  

\begin{defi}[Hyperbolic set]
	A $\Phi$-invariant compact subset $\Lambda \subset M\setminus \mathrm{Fix}(\Phi)$ is called a \textit{(uniformly) hyperbolic} set (for $\Phi$) if there exists a $D\Phi$-invariant splitting 
	$$
	T_x M=E^s(x) \oplus \R X(x) \oplus E^u(x),\qquad \forall\, x \in \Lambda,
	$$
	where the \emph{(strong) stable bundle} $E_\Phi^s$, resp. the \emph{(strong) unstable bundle} $E_\Phi^u$ is uniformly contracted, resp. expanded, i.e., there exist $C>0$, $\lambda\in (0,1)$ such that
	\begin{align*}
	\| D\Phi^t(x) \cdot v\| &\leq C \lambda^t \|v\|,\qquad \forall\, x \in \Lambda,\, \forall\, v\in E_\Phi^s(x),\, \forall\, t \geq 0, \\
	\| D\Phi^{-t}(x) \cdot v\| &\leq C \lambda^t \|v\|,\qquad \forall\, x \in \Lambda,\,  \forall\, v \in E_\Phi^u(x),\, \forall\, t \geq 0. 
	\end{align*}
	We also denote by $E_\Phi^{cs}$, resp. $E_\Phi^{cu}$, the \emph{weak stable bundle} $E_\Phi^{cs}:=E_\Phi^s \oplus \R X$, resp. the \emph{weak unstable bundle}  $E_\Phi^{cu}:=\R X\oplus E_\Phi^u$. 
\end{defi}

Let us recall the definition of an \emph{Axiom A} flow:
\begin{defi}[Axiom A flow]\label{def axiom a}
	A flow $\Phi \colon M\times\R \to M$ is called \emph{Axiom A} if the non-wandering set $\Omega(\Phi) \subset M$ can be written as a disjoint union $\Omega(\Phi)=\Lambda \cup F$, where $\Lambda$ is a closed hyperbolic set such that periodic orbits are dense in $\Lambda$,  and $F\subset \mathrm{Fix}(\Phi)$ is a finite union of hyperbolic fixed points.  
\end{defi}

\begin{defi}[Lamination]
	Let $n \geq 1$, $\beta\in (0,1)$. A $\mathcal{C}^{n,\beta}$-lamination of a set $\Lambda \subset M$ is a disjoint collection of $\mathcal{C}^{n,\beta}$ submanifolds of a given same dimension, which vary continuously in the $\mathcal{C}^{n,\beta}$-topology, and whose union contains the set $\Lambda$. 
\end{defi}

Let $\Phi\colon M\times\R \to M$ be an Axiom A flow with a decomposition $\Omega(\Phi)=\Lambda \cup F$ as in Definition \ref{def axiom a}. The stable bundle $E_\Phi^s$, resp. the unstable bundle $E_\Phi^u$, over $\Lambda$ integrates to a continuous lamination $\mathcal{W}^s_\Phi$, resp. $\mathcal{W}_\Phi^{u}$, called the \emph{(strong) stable lamination}, resp. the \emph{(strong) unstable lamination}.  Similarly, $E_\Phi^{cs}$, resp. $E_\Phi^{cu}$ integrates to a continuous lamination $\mathcal{W}_\Phi^{cs}$, resp. $\mathcal{W}_\Phi^{cu}$, called the \emph{weak stable lamination}, resp. the \emph{weak unstable lamination}. For each point $x \in \Lambda$, a local orbit segment in $\mathcal{O}_\Phi(x)$ containing $x$ will also be denoted as $\mathcal{W}_{\Phi,\mathrm{loc}}^c(x)=\mathcal{W}_{\Phi,\mathrm{loc}}^{cs}(x)\cap \mathcal{W}_{\Phi,\mathrm{loc}}^{cu}(x)$. 
Each of these laminations is invariant under the dynamics, i.e., $\Phi^t(\mathcal{W}_\Phi^*(x))=\mathcal{W}_\Phi^*(\Phi^t(x))$, for all $x \in M$ and $*=s,u,c,cs,cu$. For each subset $S \subset \Lambda$, we also denote  $\mathcal{W}^*_\Phi(S):=\cup_{x \in S} \mathcal{W}^*_\Phi(x)$, for $*=s,u,c,cs,cu$. 

Besides, we have $\Lambda=\Lambda_1\cup \cdots \cup \Lambda_m$ for some integer $m \geq 1$, where for each $i \in \{1,\dots,m\}$,  $\Lambda_i$ is a hyperbolic set such that  $\Phi|_{\Lambda_i}$ is transitive, and $\Lambda_i=\cap_{t \in \R} \Phi^t (U_i)$ for some open set $U_i \supset \Lambda_i$. The set $\Lambda_i$ is called a \emph{basic set} of $\Phi$. 

\begin{remark}\label{remarque reg}
	In general, the stable/unstable distributions $E_{\mathcal{F}}^{s/u}$ at a hyperbolic invariant set $\Lambda$ of some diffeomorphism $\mathcal{F}$ are only H\"older continuous, but according to Pinto-Rand \cite{PinRan}, when the stable, resp. unstable leaves are 
	one-dimensional, and $\Lambda$ has local product structure, then the stable holonomies, resp. unstable holonomies are of class $\mathcal{C}^{1,\beta}$, $\beta \in (0,1)$. In our case, both distributions are one-dimensional, so the holonomies will be $\mathcal{C}^{1,\beta}$, for some $\beta \in (0,1)$.
\end{remark}

Let us recall the following version of the extension theorem due to Whitney \cite{Whitney}. It legitimates the notion of differentiability in Whitney sense.

\begin{theorem}
	Fix an integer $k\geq 1$. Let $A \subset \R^n$ be a closed subset, $n \geq 1$, and let $f_0,\dots,f_k \colon A \to \R$ be 
	continuous functions such that  for some $\beta\in (0,1)$, it holds 
	\begin{equation}\label{f zero}
	f_0(y)-f_0(x)=\sum_{j=1}^k \dfrac{f_j(x)}{j!}(y-x)^j +O(|y-x|^{k+\beta}),\quad \forall\, x,y \in A.
	\end{equation}
	Then, there exists a $\mathcal{C}^{k,\beta}$ function $f \colon \R^n \to \R$ such that $f|_A=f_0|_A$, $f^{(j)}|_{A}=f_j|_A$ for $j=1,\dots, k$, and $f|_{\R^n\setminus A}$ is $\mathcal{C}^\omega$. A function $f_0\colon A \to \R$ which satisfies \eqref{f zero} for some functions $f_1,\dots, f_k\colon A \to \R$ is said to be $\mathcal{C}^{k,\beta}$ in Whitney sense. 
\end{theorem}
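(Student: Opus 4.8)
This is the classical \emph{Whitney extension theorem} in H\"older form, so I would follow the standard argument: extend $f_0$ across $A$ by means of a \emph{Whitney decomposition} of the open set $U:=\R^n\setminus A$ together with a subordinate $\mathcal{C}^\infty$ partition of unity, glueing local Taylor polynomials built from the data $f_0,\dots,f_k$. First I would take a locally finite family of dyadic cubes $\{Q_i\}_i$ with disjoint interiors covering $U$ and satisfying $\mathrm{diam}(Q_i)\le\mathrm{dist}(Q_i,A)\le 4\,\mathrm{diam}(Q_i)$ (the Whitney covering lemma), pass to slightly dilated cubes $Q_i^{*}$ of bounded overlap with $\mathrm{diam}(Q_i^{*})\approx\mathrm{dist}(Q_i^{*},A)$, and fix $\{\varphi_i\}_i$ with $\mathrm{supp}\,\varphi_i\subset Q_i^{*}$, $\sum_i\varphi_i\equiv1$ on $U$, and $|\partial^\gamma\varphi_i|\le C_\gamma\,\mathrm{diam}(Q_i)^{-|\gamma|}$ for all multi-indices $\gamma$. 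For each $i$ I would pick $a_i\in A$ nearest to $Q_i$, let $P_i(x):=\sum_{j=0}^{k}\tfrac{f_j(a_i)}{j!}(x-a_i)^j$ be the degree-$k$ Taylor polynomial of the data at $a_i$ (for $n>1$, $f_j$ is to be read as a compatible family of functions indexed by multi-indices $|\gamma|=j$), and finally set $f:=f_0$ on $A$ and $f:=\sum_i\varphi_i P_i$ on $U$.

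On $U$ this sum is locally finite with $\mathcal{C}^\infty$ summands, so $f\in\mathcal{C}^\infty(U)$ automatically; the real work is controlling $f$ and its derivatives of order $\le k$ near a point $p\in A$, and here I would use the compatibility hypothesis \eqref{f zero} twice. First, a finite-difference argument upgrades \eqref{f zero} to the Taylor-remainder bounds
\[
\Big|f_j(x)-\sum_{\ell=j}^{k}\tfrac{f_\ell(p)}{(\ell-j)!}(x-p)^{\ell-j}\Big|\le C\,|x-p|^{\,k+\beta-j},\qquad x\in A,\ j\le k.
\]
Second, if the dilated cubes $Q_i^{*},Q_{i'}^{*}$ share a point $x\in U$ with $\mathrm{dist}(x,A)\approx d$, then $|a_i-a_{i'}|\le Cd$, so \eqref{f zero} yields $|\partial^\gamma(P_i-P_{i'})(x)|\le C\,d^{\,k+\beta-|\gamma|}$ for $|\gamma|\le k$. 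Fixing $i_0$ with $x\in Q_{i_0}^{*}$, writing $f=P_{i_0}+\sum_i\varphi_i(P_i-P_{i_0})$ near $x$, and applying Leibniz while combining $|\partial^\gamma\varphi_i|\lesssim d^{-|\gamma|}$ with the previous estimate, the negative powers of $d$ cancel and I would obtain
\[
\big|\partial^\gamma f(x)-\partial^\gamma P_{i_0}(x)\big|\le C\,d^{\,k+\beta-|\gamma|},\qquad |\gamma|\le k,\ x\in U.
\]
Since $\partial^\gamma P_{i_0}(a_{i_0})=f_\gamma(a_{i_0})$ and $|x-a_{i_0}|\le Cd$, combining the two displays shows $\partial^\gamma f(x)\to f_\gamma(p)$ as $x\to p$, that $f$ is $k$ times differentiable at every point of $A$ with $j$-th derivative $f_j$, and that the order-$k$ derivatives are $\beta$-H\"older; hence $f\in\mathcal{C}^{k,\beta}(\R^n)$ with $f|_A=f_0$ and $f^{(j)}|_A=f_j$.

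To also get $f|_U\in\mathcal{C}^\omega$, which the construction above does not provide (the $\varphi_i$ are merely $\mathcal{C}^\infty$), I would replace $f$ on $U$ by an analytic approximation leaving the $k$-jet along $A$ untouched: mollify with an entire kernel (Gaussian or Cauchy type) whose bandwidth $\eta(x)$ tends to $0$ faster than $\mathrm{dist}(x,A)^{N}$ for every $N$; a convolution against an entire kernel is real-analytic, and the rescaled mollification of a $\mathcal{C}^{k,\beta}$ function converges to it together with all derivatives of order $\le k$ at a rate beating $\mathrm{dist}(\cdot,A)^{k+\beta}$, so the resulting function is $\mathcal{C}^\omega$ on $U$ and still $\mathcal{C}^{k,\beta}$ on $\R^n$ with the prescribed jet on $A$ (alternatively one simply cites the classical statement that a $\mathcal{C}^\infty$ function on an open set can be approximated by a real-analytic one in a weighted norm degenerating at the boundary).

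I expect the main obstacle to be the middle step: the exact cancellation between the blow-up $\mathrm{diam}(Q_i)^{-|\gamma|}$ of the partition-of-unity derivatives and the $\mathrm{diam}(Q_i)^{k+\beta-|\gamma|}$ closeness of the competing Taylor polynomials $P_i$ — a cancellation that is available precisely because of \eqref{f zero} — together with the passage from these asymptotic estimates to genuine pointwise differentiability at the points of $A$. The bounded-overlap bookkeeping of the cubes is routine but must be handled with care, and the real-analyticity off $A$ is a softer, essentially independent point.
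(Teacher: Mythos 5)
The paper does not prove this statement: it is the classical Whitney extension theorem in the H\"older scale, cited from \cite{Whitney} with no argument supplied, so there is no author proof to compare against. What follows is therefore an assessment of your sketch on its own.

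Your argument is the standard one --- Whitney cube decomposition of $U=\R^n\setminus A$ with $\mathrm{diam}(Q_i)$ comparable to $\mathrm{dist}(Q_i,A)$, a subordinate partition of unity with $|\partial^\gamma\varphi_i|\lesssim\mathrm{diam}(Q_i)^{-|\gamma|}$, local Taylor polynomials anchored at nearest points of $A$, and the cancellation in $f=P_{i_0}+\sum_i\varphi_i(P_i-P_{i_0})$ between the blow-up of the cut-off derivatives and the closeness of neighbouring polynomials. This is the heart of Whitney's original construction, and the analyticity-off-$A$ add-on (mollification with an entire kernel whose bandwidth shrinks superpolynomially in $\mathrm{dist}(\cdot,A)$, or a citation of Whitney's real-analytic approximation theorem) is a correct, essentially independent softer step.

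The one step that needs more care is your sentence ``a finite-difference argument upgrades \eqref{f zero} to the Taylor-remainder bounds'' for every $j\leq k$. For $n=1$ this is legitimate and is itself a theorem of Whitney: symmetrizing \eqref{f zero} in $x,y$ and forming divided differences along the totally ordered set $A\subset\R$ propagates the order-$0$ compatibility down the whole jet. For $n\geq 2$, however, condition \eqref{f zero} at order $j=0$ alone does \emph{not} in general imply the higher-order conditions --- Whitney constructed counterexamples --- and the classical extension theorem must postulate the full family of compatibility relations, namely $f_\gamma(y)=\sum_{|\delta|\leq k-|\gamma|}\tfrac{f_{\gamma+\delta}(x)}{\delta!}(y-x)^\delta+O(|y-x|^{k+\beta-|\gamma|})$ for every multi-index $|\gamma|\leq k$ and all $x,y\in A$. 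The paper's statement, written in scalar form with only the $j=0$ relation yet later applied to two-dimensional traces of basic sets in the Markov rectangles, is slightly imprecise on this point, and your sketch inherits the imprecision. The fix is either to assume the full multi-index compatibility explicitly in the hypotheses, or to restrict the finite-difference derivation to $n=1$.
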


\subsection{Dynamical spectral rigidity of contact Axiom A flows}

Our main dynamical result is the following. 
\begin{theoalph}[Length spectral rigidity on basic sets]\label{thm princ dyn}
	Fix $k \geq 2$. For $i=1,2$, let $\Phi_i=(\Phi_i^t)_{t \in \R}$ be a $\mathcal{C}^k$ Axiom A flow defined on a $3$-manifold $M_i$. Let $\Lambda_i$ be a basic set for $\Phi_i$, and assume that there exists a smooth contact form $\alpha_i$ on $M_i$ that is adapted to $\Lambda_i$. If there exists an orbit equivalence $\Psi_0\colon \Lambda_1 \to  \Lambda_2$ between $\Phi_1|_{\Lambda_1}$ and $\Phi_2|_{\Lambda_2}$ that is differentiable along  $\Phi_1$-orbits and iso-length-spectral, 
	then
	 \begin{enumerate}
	 	\item $\Phi_1|_{\Lambda_1}$, $\Phi_2|_{\Lambda_2}$ are $\mathcal{C}^{k}$-conjugate; 
	 	more precisely,  there exists a H\"older continuous homeomorphism $\Psi\colon \Lambda_1 \to  \Lambda_2$ that is $\mathcal{C}^{k}$ in Whitney sense, such that
	 	$$
	 	\Psi \circ \Phi_1^t(x)=\Phi_2^{t} \circ \Psi(x),\quad  \text{for all } (x,t) \in \Lambda_1\times \R;
	 	$$
	 	\item $\Psi$ preserves the contact form, i.e., $\Psi^* \alpha_2|_{\Lambda_1}=\alpha_1|_{\Lambda_1}$. 
	 \end{enumerate}
\end{theoalph}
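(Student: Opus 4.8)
The plan is to argue in the style of Otal \cite{Otal} and de la Llave--Marco--Moriy\'on \cite{MM}, in three stages. \emph{Step 1: from orbit equivalence to conjugacy.} Put $a(x):=\frac{d}{dt}\big|_{t=0}\theta(x,t)$; this is positive and continuous since $\Psi_0$ is differentiable along $\Phi_1$-orbits. Applying the intertwining relation twice gives the cocycle identity $\theta(x,t+s)=\theta(\Phi_1^s x,t)+\theta(x,s)$, hence $\theta(x,t)=\int_0^t a(\Phi_1^s x)\,ds$. If $x\in\mathrm{Per}(\Phi_1)\cap\Lambda_1$ has prime period $T$, then $\Psi_0$ maps $\mathcal{O}_{\Phi_1}(x)$ onto $\mathcal{O}_{\Phi_2}(\Psi_0(x))$ in an orientation-preserving way (as $\theta(x,\cdot)$ is increasing), so $\theta(x,T)=T_{\Phi_2}(\Psi_0(x))=T$ by iso-length-spectrality; equivalently, $1-a$ has zero integral over every periodic orbit of $\Phi_1|_{\Lambda_1}$. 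Since $\Lambda_1$ is a transitive basic set, the Liv\v{s}ic theorem gives a continuous $\tau\colon\Lambda_1\to\R$ with $L_{X_1}\tau=1-a$, and then $\Psi(x):=\Phi_2^{\tau(x)}(\Psi_0(x))$ satisfies $\Psi\circ\Phi_1^t=\Phi_2^t\circ\Psi$, using $\tau(\Phi_1^t x)-\tau(x)=\int_0^t(1-a(\Phi_1^s x))\,ds=t-\theta(x,t)$. Thus $\Psi$ is a time-preserving conjugacy, automatically H\"older by rigidity of conjugacies between hyperbolic sets (cf. \cite{HaKa}); it maps each $\mathcal{W}^*_{\Phi_1}$ onto $\mathcal{W}^*_{\Phi_2}$ for $*\in\{s,u,c,cs,cu\}$, and along flow lines it is $\mathcal{C}^\infty$ with $L_{X_1}\Psi=X_2\circ\Psi$.

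\emph{Step 2: regularity along the laminations.} Fix a periodic point $p$. Then $\Psi$ maps $\mathcal{W}^s_{\Phi_1}(p)$ onto $\mathcal{W}^s_{\Phi_2}(\Psi(p))$, conjugating the corresponding one-dimensional Poincar\'e contractions, and likewise on the unstable side. The Reeb conditions imply that along $\Lambda_i$ the flow preserves $\alpha_i$, that $\ker\alpha_i|_{\Lambda_i}=E^s_{\Phi_i}\oplus E^u_{\Phi_i}$, and that $d\alpha_i$ restricts there to a $D\Phi_i^t$-invariant area form; in particular each linearized Poincar\'e map is area-preserving, so its stable and unstable multipliers are reciprocal. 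The decisive point --- and, I expect, the main obstacle --- is to promote this to genuine matching: for every periodic orbit $\gamma$ of $\Phi_1|_{\Lambda_1}$, the stable multiplier of $\Phi_1$ at $\gamma$ should equal that of $\Phi_2$ at $\Psi(\gamma)$. Equality of periods alone does not control contraction rates; one has to combine the contact geometry (which rigidly couples $E^s_{\Phi_i}$ and $E^u_{\Phi_i}$ through $d\alpha_i$) with the differentiability of $\Psi$ along orbits, in the spirit of Otal's use of the marked length spectrum, and this is where the bulk of the work lies. Granting the multiplier matching, the standard eigenvalue-rigidity mechanism for basic sets on surfaces --- de la Llave--Marco--Moriy\'on \cite{MM}, Pinto--Rand \cite{PRarxiv}, applied to the Poincar\'e sections, whose stable/unstable holonomies are $\mathcal{C}^{1,\beta}$ by Remark \ref{remarque reg} --- shows that $\Psi$ is $\mathcal{C}^{1,\beta}$ in Whitney sense along $\mathcal{W}^s_{\Phi_1}$ and along $\mathcal{W}^u_{\Phi_1}$. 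Combining this with $\mathcal{C}^\infty$-regularity along flow lines through a Journ\'e-type gluing lemma, and invoking the Whitney extension theorem quoted above, gives that $\Psi$ is $\mathcal{C}^{1,\beta}$ in Whitney sense on $\Lambda_1$; this is assertion (1).

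\emph{Step 3: invariance of the contact form.} Being $\mathcal{C}^1$ in Whitney sense and time-preserving, $\Psi$ has a continuous derivative $D\Psi\colon T_xM_1\to T_{\Psi(x)}M_2$ for $x\in\Lambda_1$ (where $T_xM_1=E^s_{\Phi_1}(x)\oplus\R X_1(x)\oplus E^u_{\Phi_1}(x)$), intertwining the linearized flows; by uniqueness of the hyperbolic splitting it maps $\R X_1$, $E^s_{\Phi_1}$, $E^u_{\Phi_1}$ onto $\R X_2$, $E^s_{\Phi_2}$, $E^u_{\Phi_2}$ respectively. Then $\Psi^*\alpha_2-\alpha_1$ vanishes on $X_1$, since $(\Psi^*\alpha_2)(X_1)=\alpha_2(X_2)\circ\Psi\equiv 1\equiv\alpha_1(X_1)$ by condition (a); and it vanishes on $E^s_{\Phi_1}\oplus E^u_{\Phi_1}$, since $\alpha_i$ annihilates $E^s_{\Phi_i}\oplus E^u_{\Phi_i}$ along $\Lambda_i$, so that $(\Psi^*\alpha_2)(v)=\alpha_2(D\Psi\,v)=0=\alpha_1(v)$ for $v\in E^s_{\Phi_1}\oplus E^u_{\Phi_1}$. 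As these summands span $T_xM_1$ for every $x\in\Lambda_1$, we conclude $\Psi^*\alpha_2|_{\Lambda_1}=\alpha_1|_{\Lambda_1}$, which is assertion (2).

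\emph{On the main obstacle.} Everything outside the multiplier-matching claim of Step 2 is either classical rigidity machinery (Liv\v{s}ic, de la Llave--Marco--Moriy\'on, Pinto--Rand, Journ\'e, Whitney) or bookkeeping. The genuinely new input --- and the precise point where both the contact hypothesis and the equality of the length spectrum are indispensable, and where the analogy with Otal \cite{Otal} and the generalization of Feldman--Ornstein \cite{FelOrn} take place --- is the identification of the periodic multipliers of the two flows; I would expect this to occupy the technical heart of the argument.
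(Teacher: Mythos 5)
Your Step 1 matches the paper's Proposition \ref{upgrading orbit eq} exactly: cocycle identity for $\theta$, Liv\v{s}ic on $\Lambda_1$, and the synchronization $\Psi(x):=\Phi_2^{\tau(x)}(\Psi_0(x))$. Your Step 3 is also essentially the paper's argument in Subsection \ref{sous sect contact}; the one ingredient you take for granted is that $E^s_{\Phi_i}\oplus E^u_{\Phi_i}=\ker\alpha_i$ on $\Lambda_i$, which is not one of the stated Reeb conditions --- the paper derives it (Lemma \ref{lemma noyau ker}) from condition~\eqref{condition deux}, boundedness of $\alpha_i$, and contraction along stable/unstable arcs.

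Step 2 is where the genuine gap sits, and you acknowledge it yourself. You reduce the theorem to matching the stable Floquet multiplier of $\Phi_1$ at every periodic orbit with that of $\Phi_2$ at the corresponding orbit, observe that equality of periods alone does not deliver this, write that ``this is where the bulk of the work lies,'' and then simply grant it. You cannot: a posteriori, multiplier matching is equivalent to differentiability of the conjugacy along periodic orbits, so granting it amounts to assuming the conclusion at the dense set of periodic points. You suggest the contact structure should couple $E^s$ and $E^u$ and force the matching, but you give no argument, and I see no reason this route would close.

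The paper in fact takes a different route --- the one Otal actually uses, even though you cite him --- which bypasses multiplier matching entirely. The hard core is the temporal-displacement/area identity for small quadrilaterals. Concretely: one fixes a Markov family on $\Lambda$, forms small quadrilaterals $\mathscr{Q}=(x_0,x_1,x_2,x_3)\in\Lambda^4$ from $s/u$-holonomies, and attaches to each the scalar temporal displacement $H(\mathscr{Q})$ of \eqref{temporal displacemetn}. Constructing periodic orbits with prescribed symbolic combinatorics shows (Lemma \ref{lemma approx}, Proposition \ref{main propo restr}) that $H(\mathscr{Q})$ is a limit of linear combinations of periodic lengths, hence a length-spectral invariant. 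Independently, the Reeb conditions and Stokes' theorem identify $H(\mathscr{Q})=-\int_{\widehat{\mathscr{Q}}}d\alpha$ (Proposition \ref{lien h aire}). Therefore the flow conjugacy from your Step 1 preserves $d\alpha$-areas of all small quadrilaterals (Corollary \ref{egalite aires}), and a comparison of nested quadrilaterals --- using the $\mathcal{C}^{1,\beta}$ regularity of the $s/u$-holonomies from Pinto--Rand --- yields existence and H\"older continuity of the directional derivatives $\partial_s\Psi$, $\partial_u\Psi$ (Proposition \ref{differnti}). Only then does the Journ\'e/Nicol--T\"or\"ok lemma enter, as you anticipated. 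So your scaffolding (Liv\v{s}ic, leafwise regularity, Journ\'e, Whitney, contact invariance at the end) is right, but the engine driving the leafwise regularity in the paper is area-preservation of quadrilaterals, not eigenvalue rigidity, and the step you identify as the ``main obstacle'' is left unproved in your proposal.
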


In other terms, iso-length-spectral orbit equivalence classes between basic sets of $\mathcal{C}^k$ Axiom A flows with an adapted contact form are in one-to-one correspondence with $\mathcal{C}^{k}$ flow conjugacy classes 
between these basic sets, where the conjugacy preserves the contact form. Besides, it will be clear from the proof that the $\mathcal{C}^k$-regularity is actually needed on $\Lambda_i$ (in Whitney sense).  

\begin{remark}\label{remark Fisher Hassl}
	Let $\Phi_1,\Phi_2$, and let $\Lambda_1$, $\Lambda_2$ be as in Theorem \ref{thm princ dyn}. The flow conjugacy $\Psi\colon \Lambda_1 \to  \Lambda_2$ between $\Phi_1|_{\Lambda_1}$ and $\Phi_2|_{\Lambda_2}$ given by Theorem \ref{thm princ dyn} is essentially unique. Indeed, for any other flow conjugacy $\widetilde{\Psi}\colon \Lambda_1 \to  \Lambda_2$, it holds 
	$$
	(\Psi^{-1}\circ \widetilde{\Psi} )\circ \Phi_1^t=\Phi_1^t \circ (\Psi^{-1}\circ \widetilde{\Psi})\quad \text{on }\Lambda_1,
	$$ 
	that is, $\Psi^{-1}\circ\widetilde{\Psi}$ is in the diffeomorphism centralizer of $\Phi_1|_{\Lambda_1}$. By \cite[Theorem 1.4]{BFH}, the centralizer is trivial, hence $\widetilde{\Psi}=\Psi\circ\Phi_1^T$, for some $T\in\R$. 
	In Subsection \ref{section symmetry ies}, we explain that in some cases (when the system has a time-reversal symmetry) there is a natural way to choose $T$ so as to make the conjugacy \textit{canonical}. 
\end{remark}

 Since the Hausdorff dimension is preserved by Lipschitz continuous homeomorphisms, and since the stable/unstable Hausdorff dimensions are constant on $\Lambda$ (see for instance \cite{Pesinlecochino}), we deduce from Theorem \ref{thm princ dyn} the following result:

\begin{coralph}
	Let $\Phi=(\Phi^t)_{t \in \R}$ be a $\mathcal{C}^k$ Axiom A flow defined on a smooth $3$-manifold $M$, $k \geq 2$. Let $\Lambda$ be a basic set for $\Phi$ with an adapted smooth contact form $\alpha$. Then, the Hausdorff dimensions $\mathrm{dim}_H(\Lambda)$, $\delta^{(s)}(\Lambda)$, $\delta^{(u)}(\Lambda)$ are invariant under iso-length-spectral orbit equivalences, where for $*=s,u$, we let $\delta^{(*)}(\Lambda)=\delta^{(*)}:=\dim_{H}(\Lambda\cap \mathcal{W}_{\Phi}^*(x))$, for any $x \in \Lambda$. 
\end{coralph}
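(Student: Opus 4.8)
The plan is to reduce the statement to Theorem \ref{thm princ dyn} together with the bi-Lipschitz invariance of Hausdorff dimension. So let $\Phi_2=(\Phi_2^t)_{t\in\R}$ be a second $\mathcal C^k$ Axiom A flow on a smooth $3$-manifold $M_2$, let $\Lambda_2$ be a basic set for $\Phi_2$ carrying an adapted smooth contact form $\alpha_2$, and let $\Psi_0\colon\Lambda\to\Lambda_2$ be an iso-length-spectral orbit equivalence, differentiable along orbits, between $\Phi|_{\Lambda}$ and $\Phi_2|_{\Lambda_2}$, so that Theorem \ref{thm princ dyn} applies. First I would invoke Theorem \ref{thm princ dyn} to replace $\Psi_0$ by a H\"older homeomorphism $\Psi\colon\Lambda\to\Lambda_2$ which is a genuine flow conjugacy, $\Psi\circ\Phi^t=\Phi_2^t\circ\Psi$, and is $\mathcal C^{1,\beta}$ in the Whitney sense; by the Whitney extension theorem quoted above, $\Psi$ then extends to a $\mathcal C^{1,\beta}$ map on a neighbourhood of $\Lambda$, hence is Lipschitz on the compact set $\Lambda$. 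The inverse $\Psi^{-1}\colon\Lambda_2\to\Lambda$ is again a flow conjugacy, hence in particular an iso-length-spectral orbit equivalence differentiable along orbits, so Theorem \ref{thm princ dyn} applies to it as well and produces a $\mathcal C^{1,\beta}$-in-Whitney (hence Lipschitz) conjugacy $\Lambda_2\to\Lambda$; by Remark \ref{remark Fisher Hassl} this conjugacy coincides with $\Psi^{-1}$ up to post-composition with some $\Phi^T$, and since $\Phi$ is $\mathcal C^k$ with $k\ge 2$ we conclude that $\Psi^{-1}$ is Lipschitz too. Thus $\Psi$ is a bi-Lipschitz homeomorphism of $\Lambda$ onto $\Lambda_2$.

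Next I would check that $\Psi$ carries the strong stable and strong unstable laminations of $\Phi$ on $\Lambda$ to those of $\Phi_2$ on $\Lambda_2$. This uses that $\Psi$ is a genuine conjugacy, not merely an orbit equivalence: if $y\in\mathcal W_\Phi^s(x)\cap\Lambda$ then $d(\Phi^t y,\Phi^t x)\to0$ as $t\to+\infty$, whence $d\big(\Phi_2^t\Psi(y),\Phi_2^t\Psi(x)\big)=d\big(\Psi\Phi^t(y),\Psi\Phi^t(x)\big)\to0$ by uniform continuity of $\Psi$ on the compact set $\Lambda$, and therefore $\Psi(y)\in\mathcal W_{\Phi_2}^s(\Psi(x))$ by the usual characterisation of the strong stable manifold of a point in a hyperbolic set (see e.g. \cite{HaKa}); running the same argument with $\Psi^{-1}$, and symmetrically with the unstable leaves, one gets for every $x\in\Lambda$ that
$$
\Psi\big(\Lambda\cap\mathcal W_\Phi^s(x)\big)=\Lambda_2\cap\mathcal W_{\Phi_2}^s(\Psi(x)),\qquad \Psi\big(\Lambda\cap\mathcal W_\Phi^u(x)\big)=\Lambda_2\cap\mathcal W_{\Phi_2}^u(\Psi(x)).
$$

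Finally I would conclude by dimension counting. As $\Psi\colon\Lambda\to\Lambda_2$ is bi-Lipschitz and bi-Lipschitz maps preserve Hausdorff dimension, $\dim_H(\Lambda)=\dim_H(\Lambda_2)$. Restricting $\Psi$ to a single strong stable leaf through $x$, the first displayed identity exhibits it as a bi-Lipschitz bijection of $\Lambda\cap\mathcal W_\Phi^s(x)$ onto $\Lambda_2\cap\mathcal W_{\Phi_2}^s(\Psi(x))$, so that $\dim_H\big(\Lambda\cap\mathcal W_\Phi^s(x)\big)=\dim_H\big(\Lambda_2\cap\mathcal W_{\Phi_2}^s(\Psi(x))\big)$; since by \cite{Pesinlecochino} these numbers do not depend on the base point, this is precisely $\delta^{(s)}(\Lambda)=\delta^{(s)}(\Lambda_2)$, and the unstable case is identical, replacing $\mathcal W^s$ by $\mathcal W^u$ and $\delta^{(s)}$ by $\delta^{(u)}$. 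Once Theorem \ref{thm princ dyn} is granted this is essentially formal; the only point that deserves care is the bi-Lipschitz regularity of $\Psi$, equivalently the control of $\Psi^{-1}$, which is why the argument above feeds $\Psi^{-1}$ back into Theorem \ref{thm princ dyn} and uses the uniqueness of conjugacies up to reparametrization from Remark \ref{remark Fisher Hassl}.
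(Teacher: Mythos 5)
Your proof is correct and follows essentially the same route as the paper, which simply invokes Theorem \ref{thm princ dyn} and the fact that bi-Lipschitz homeomorphisms preserve Hausdorff dimension (the paper dispatches this corollary in one sentence). You have filled in precisely the details the paper leaves implicit: that the Whitney-$\mathcal C^{1,\beta}$ conjugacy $\Psi$ is Lipschitz via Whitney extension, that $\Psi^{-1}$ is also Lipschitz (obtained by feeding the inverse orbit equivalence back into Theorem \ref{thm princ dyn} and using Remark \ref{remark Fisher Hassl} to identify the output with $\Psi^{-1}$ up to a smooth time shift), and that a flow conjugacy sends the trace of $\Lambda$ on each strong stable/unstable leaf bijectively onto the corresponding trace for $\Lambda_2$.
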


\subsection{Open dispersing billiards}\label{sec prelim}
We consider a billiard table $\dom = \R^{2}\setminus \bigcup^{\ell}_{i = 1}\obs_{i}$ obtained by removing from the plane $\ell\geq 3$ \emph{obstacles} $\obs_1,\dots,\obs_\ell$, each 
$\obs_{i}$ being a convex domain with $\mathcal{C}^k$ boundary $\partial\obs_{i}$, for some $k \geq 3$, such that $\overline{\obs_1},\dots,\overline{\obs_\ell}$ are pairwise disjoint. For each $i \in \{1,\dots,\ell\}$, we let $|\partial\obs_{i}|$ be the
corresponding perimeter, and parametrize
each $\partial\obs_{i}$ counterclockwisely  in arc-length by some map
$\Upsilon_i \in \mathcal{C}^k( \T_i, \R^2)$, $s\mapsto \Upsilon_i(s)$, where $\T_i:=\R/(|\partial\obs_{i}|\Z)$.  
The set of all such billiard tables will be denoted by $\billiards$, and for each $\ell \geq 3$, we let $\billiards(\ell)\subset \billiards$ be the subset of tables with $\ell$ obstacles. 

Let 
$\dom = \R^{2}\setminus \bigcup^{\ell}_{i = 1}\obs_{i} \in
\billiards$, for some $\ell \geq 3$. We denote the collision space by
\begin{align*}
\mathcal{M} &:= \bigcup_i \mathcal{M}_i, &
\mathcal{M}_i &:=\{(q,v),\ q \in \partial\obs_{i},\ v\in \R^2,\ \|v\|=1,\ \langle v,n\rangle\geq 0\},
\end{align*}
where $n$ is the unit normal
vector to $\partial\obs_{i}$ pointing outside $\obs_i$. For each
$x=(q,v) \in \mathcal{M}$, we have $q=\Upsilon_i(s)$, for some $i\in \{1,\dots,\ell\}$ and some arc-length
parameter $s \in \mathbb{T}_i$; we let $\varphi\in [-\frac{\pi}{2},\frac{\pi}{2}]$
be the oriented angle between $n$ and $v$, and set $r:=\sin \varphi$. Therefore,  each $\mathcal{M}_i$ can be seen as
a cylinder $\T_i \times [-1,1]$ endowed with
coordinates $(s,r)$. In the following, given a point $x=(s,r) \in \mathcal{M}$, we let
$\Upsilon(s):=q$ be the associated point of $\partial \mathcal{D}$. 

For each pair $(s_1,r_1), (s_2,r_2) \in \mathcal{M}$, we denote by
\begin{equation}\label{def hsspremie}
h(s_1,s_2):=\|\Upsilon(s_1)-\Upsilon(s_2)\|
\end{equation}
the Euclidean length of the segment connecting the associated points 
of the table.

Let $\mathfrak{M} :=\{(q,v) \in \mathcal{D} \times \mathbb{S}^1\}/\sim$ be the quotient of $\mathcal{D} \times \mathbb{S}^1$ by the relation $\sim$:
$$
(q_1,v_1)\sim (q_2,v_2)\quad \Longleftrightarrow \quad q_1=q_2 \in \partial \mathcal{D}\text{ and } v_2=\mathcal{R}_{q_1}(v_1),
$$ 
where $\mathcal{R}_{q_1}$ is the reflection in $\R^2$ with respect to the tangent line $T_{q_1}\partial \mathcal{D}$. An element of $\mathfrak{M}$ will be denoted as $[(q,v)]$. 
In the following, we identify a point $[(q,v)]\in \mathfrak{M}$, $q\in \partial \mathcal{D}$, with the corresponding element $(q,v)\in \mathcal{M}$. 
Let $\Phi=(\Phi^t)_{t \in \R}$ be the associated billiard flow on $\mathfrak{M}$. We can describe this flow with coordinates $(x,y,\omega)$, where $(x,y)\in \R^2$ are the Cartesian coordinates of some point $q \in \mathcal{D}$ on the table and $\omega \in [0,2\pi)$ denotes the couterclockwise angle between the positive $x$ axis and the velocity vector $v$. For each $x \in \mathcal{M}$, we let $\tau(x)\in \R_+\cup \{+\infty\}$ be the first return time of the $\Phi$-orbit of $x$ to $\mathcal{M}$, and denote by
$$
\mathcal{F}=\mathcal{F}(\mathcal{D}) \colon \mathcal{M}\cap \{\tau \neq +\infty\} \to \mathcal{M},\quad x \mapsto \Phi^{\tau(x)}(x)
$$
the associated billiard map, which we see as a map 
$
\mathcal{F}\colon (s,r)\mapsto (s',r')
$, with $s'=s'(s,r)$ and $r'=r'(s,r)$. 

\begin{figure}[h]
	\includegraphics[scale=0.7, trim=0 1.3cm 0 1cm, clip]{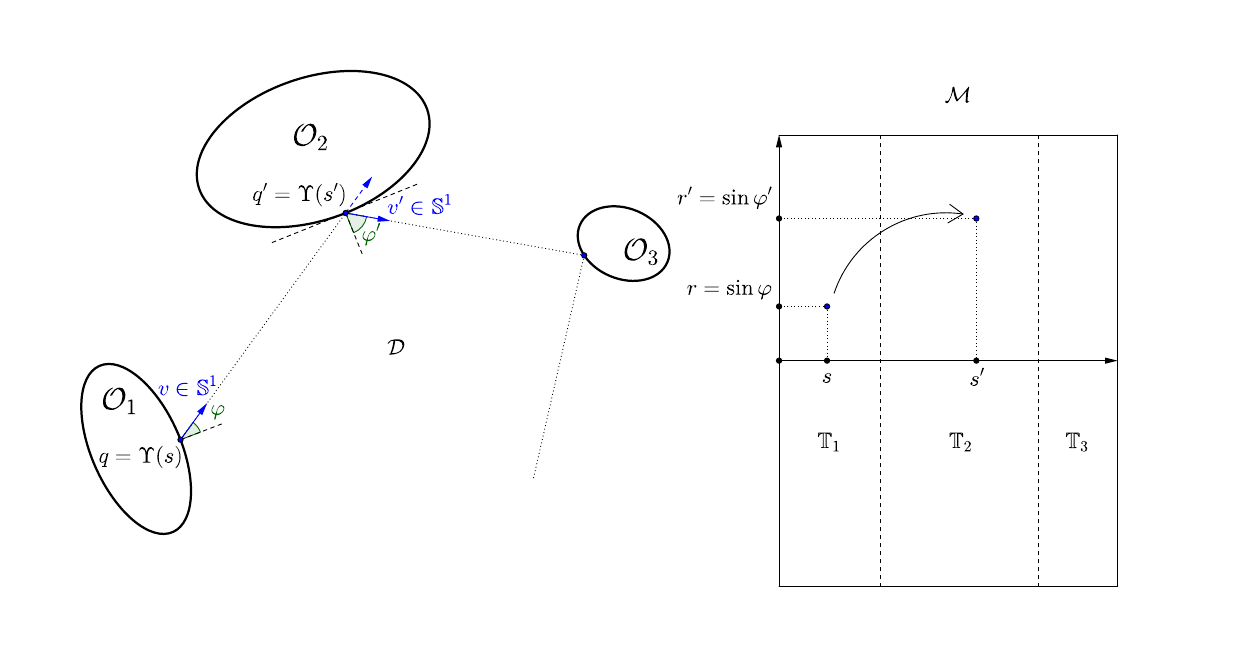}
	\caption{An open dispersing billiard and its phase space.}
\end{figure}

For any point $x=(s,r)\in \mathcal{M}$ with a well-defined image $(s',r')=\mathcal{F}(s,r)$, recall that $h=h(s,s')$ is the distance between the two points of collision. Note that $h(s,s')=h(s,s'(s,r))=\tau(s,r)$ is the first return time of $(s,r)\in \mathcal{M}$ to $\mathcal{M}$. Let $\mathcal{K}:=\mathcal{K}(s)$, $\mathcal{K}':=\mathcal{K}(s')$ be the respective curvatures, and set $\nu=\nu(r):=\sqrt{1-r^2}$,  $\nu':=\nu(r')=\sqrt{1-(r')^2}$. 
By the formulas in Chernov-Markarian \cite{ChernovMarkarian}, the differential of the billiard map is
\begin{equation}\label{magtrice diff billi}
D\mathcal{F}(s,r)=-\begin{bmatrix}
\frac{1}{ \nu'}(h \mathcal{K}+\nu) & \frac{h}{\nu \nu'}\\
h \mathcal{K}\mathcal{K}'+\mathcal{K} \nu'+\mathcal{K}'\nu & \frac{1}{\nu}(h \mathcal{K}'+\nu')
\end{bmatrix}.
\end{equation}
The map $\mathcal{F}$ is exact symplectic for the Liouville form $\lambda=-rds$:
\begin{equation}\label{gen fun billllii}
\mathcal{F}^* \lambda - \lambda = d\tau.
\end{equation}
Fix a lift $\widetilde{\mathcal{F}}$ of $\mathcal{F}$ to $\R \times [-1,1]$. We let $|\partial \mathcal{D}|:=|\partial \obs_1|+\dots+|\partial \obs_m|$ be the total perimeter, and extend the definition of $h$ by letting $h(s+p|\partial \mathcal{D}|,s'+q|\partial \mathcal{D}|)=h(s,s')$, for any $p,q \in \Z$. Then, $h$ is a \emph{generating function}\footnote{In the following, we will also refer to the function $\tau=\tau(s,r)$ as a generating function.} for the dynamics of $\widetilde{\mathcal{F}}$ (or $\mathcal{F}$):
\begin{equation*}
\left\{
\begin{array}{lcl}
r &=& \frac{\partial h(s,s')}{\partial s},\\
r' &=& -\frac{\partial h(s,s')}{\partial s'}.
\end{array}
\right.
\end{equation*}
Observe that $\mathcal{F}$ is a negative twist map, i.e., $\frac{\partial s'}{\partial r}(s,r)<0$, and that $-\frac{\partial^2 h}{\partial s \partial s'}(s,s')>0$. Let us also recall that the time-reversal involution $\mathcal{I}\colon (s,r)\mapsto (s,-r)$ conjugates the billiard map with its inverse, i.e., $\mathcal{F}\circ \mathcal{I} = \mathcal{I}\circ \mathcal{F}^{-1}$. 

Due to the strict convexity of the obstacles, the dynamics is of Axiom A type (see  \cite{Morita2004,Morita2007} or \cite[Subsection 2.1]{Sto} for more details). In connection with Remark \ref{remarque reg}, let us also recall that several works have been dedicated to the smoothness of stable/unstable laminations of open dispersing billiards 
(see Morita \cite{Morita2004} and Stoyanov \cite{Sto}). 
Besides, if the non-wandering set 
$$
\Omega(\mathcal{F}):= \bigcap_{j \in \Z} \mathcal{F}^{j}(\mathcal{M})
$$  
has no tangential collisions, then it 
is a hyperbolic set; moreover, we have $\Omega(\mathcal{F})=\Lambda \cup F$,  $\Lambda \cap F=\emptyset$, where $F$ is a finite union of periodic points, and  
$\Lambda$ can be written as a disjoint union $\Lambda=\Lambda_1\cup\dots \cup \Lambda_m$, $m \geq 1$, each $\Lambda_i$ being a horseshoe such that $\mathcal{F}|_{\Lambda_i}$ is conjugated to a non-trivial subshift of finite type. In the following, for each point $x\in\Omega(\mathcal{F})$, we denote by $\mathcal{W}_\mathcal{F}^s(x)$, resp. $\mathcal{W}_\mathcal{F}^u(x)$, its stable, resp. unstable manifold for the map $\mathcal{F}$.   The non-wandering set $\Omega(\Phi)$ of the billiard flow $\Phi$ is the set of all points in the orbit of some $x \in \Omega(\mathcal{F})$. Similarly, when speaking about a basic set for $\Phi$ in the following, we mean the union of orbits of all the points in a set $\Lambda_i$ as above, for some $i\in\{1,\dots,m\}$. 
Let us define the quotient set $\Lambda_i^\tau:=\{(s,r,t)\in \Lambda_i\times \R : 0 \leq t \leq \tau(s,r)\}/\approx$, where 
$$
((s,r),\tau(s,r))\approx (\mathcal{F}(s,r),0).
$$
We can identify $\Lambda_i^\tau$ with the set $\{(s,r,t)\in \Lambda_i\times \R: 0 \leq t < \tau(s,r)\}$, and define the projection $\Pi \colon \Lambda_i^\tau \to \partial \mathcal{D}$ as\footnote{By a slight abuse of notation, we will also denote by $\Pi \colon \Lambda_i \to \partial \mathcal{D}$ the projection $(s,r)\mapsto s$.}
\begin{equation}\label{projection obst}
\Pi(s,r,t):=s \simeq \Upsilon(s).
\end{equation}
The billiard flow $\Phi$ restricted to the orbits of points in $\Lambda_i$ is defined at all times and can be seen as a \emph{special flow} induced by the vertical vector field $X=\frac{\partial}{\partial t}=(0,0,1)$ on $\Lambda_i^{\tau}$. 
 Actually, the $(x,y,\omega)$-coordinates introduced above are slightly more convenient, as they also allow to describe points which are not in $\Omega(\Phi)$: for any point $(s,r,t)\in \Lambda_i^\tau$, with $r=\sin \varphi \in (-1,1)$, $t \in [0,\tau(s,r))$, we let $U(s,r,t):=(x,y,\omega)\in \mathfrak{M}$ be the corresponding $(x,y,\omega)$-coordinates, with $x=x(s,r,t)$, $y=y(s,r,t)$, and 
 \begin{gather*}
 \omega=\omega(s,r)=\angle \big(R_{-\frac \pi 2+\varphi}\big(\Upsilon'(s)\big),(1,0)\big)=\angle \big(R_{-\frac \pi 2+\arcsin r}\big(\Upsilon'(s)\big),(1,0)\big),\\
 \big(x(s,r,t),y(s,r,t) \big)=\Upsilon(s)+t(\cos \omega,\sin \omega),
 \end{gather*}
 where $\Upsilon(s)$ is the associated point of $\partial \mathcal{D}$, and for $\theta \in \R$, $R_{\theta}$ is the rotation of angle $\theta$. The map $\Upsilon$ is $\mathcal{C}^k$, hence the change of coordinates $U$ is of class $\mathcal{C}^{k-1}$.
 
\begin{claim}
	The contact form $\alpha=\lambda+dt$ is adapted to $\Lambda_i^\tau$ (recall Definition \ref{adapted contact form}).
\end{claim}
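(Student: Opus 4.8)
The plan is to recognize that $\alpha=\lambda+dt$ is exactly the canonical contact form associated to the exact symplectic return map $\mathcal{F}$ through the suspension (mapping torus) construction, and that the suspension vector field $X=\frac{\partial}{\partial t}=(0,0,1)$ is precisely its Reeb vector field; granting this, both Reeb conditions of Definition~\ref{adapted contact form} become automatic. Concretely, I would proceed in four short steps: (i) check that $\alpha$ — and hence $d\alpha$ — descends to a well-defined smooth $1$-form on $\Lambda_i^\tau$ (and on the suspension of a neighborhood of $\Lambda_i$ in $\mathcal{M}$); (ii) verify that $\alpha\wedge d\alpha$ is a volume form, so that $\alpha$ is a contact form; (iii) compute $\imath_X\alpha$ and $\imath_X d\alpha$ to identify $X$ with the Reeb field of $\alpha$; (iv) read off conditions (a) and (b) of Definition~\ref{adapted contact form}.

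For step (i), in a suspension chart with coordinates $(s,r,t)$, $0\le t<\tau(s,r)$, one has $\alpha=-r\,ds+dt$, and two such charts are glued by the map $g\colon(x,t)\mapsto(\mathcal{F}(x),t-\tau(x))$ on $\Lambda_i\times\R$; since $\lambda=-r\,ds$ is pulled back from the base and $t$ is the fiber coordinate, $g^*(\lambda+dt)=\mathcal{F}^*\lambda-d\tau+dt$, which equals $\lambda+dt$ by the exact symplectic identity $\mathcal{F}^*\lambda-\lambda=d\tau$ of \eqref{gen fun billllii} (i.e.\ by the fact that $h$ is a generating function for $\mathcal{F}$). Hence $\alpha$ descends, and so does $d\alpha=d\lambda$, consistently with $\mathcal{F}$ being symplectic. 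For steps (ii)--(iii): in the chart, $d\alpha=d\lambda=-dr\wedge ds$ has no $dt$-component, so $\alpha\wedge d\alpha=dt\wedge(-dr\wedge ds)=ds\wedge dr\wedge dt$ is a non-vanishing $3$-form — positive for the orientation making $ds\wedge dr$ the positive symplectic area on $\mathcal{M}$ — so $\alpha$ is a contact form; and since $X=\frac{\partial}{\partial t}$, we get $\imath_X\alpha=dt(X)=1$ and $\imath_X d\alpha=\imath_{\partial/\partial t}(-dr\wedge ds)=0$, so $X$ is the Reeb vector field of $\alpha$ on the whole neighborhood.

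Finally, for step (iv): condition~(a), $\imath_X\alpha|_{\Lambda_i^\tau}\equiv1$, is the first identity above. Condition~(b) is automatic as well: the leaves of $\mathcal{W}^{cs}_\Phi$ and $\mathcal{W}^{cu}_\Phi$ are unions of $\Phi$-orbits, hence tangent to $X$ at every point, so the restriction of $d\alpha$ to each such leaf annihilates $X$ because $\imath_X d\alpha\equiv0$; that is, $X|_{\mathcal{W}^{cs}_\Phi(\Lambda_i^\tau)}\in\ker d\alpha|_{\mathcal{W}^{cs}_\Phi(\Lambda_i^\tau)}$ and $X|_{\mathcal{W}^{cu}_\Phi(\Lambda_i^\tau)}\in\ker d\alpha|_{\mathcal{W}^{cu}_\Phi(\Lambda_i^\tau)}$. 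Thus $\alpha$ is adapted to $\Lambda_i^\tau$. The computation is essentially obstruction-free; the two points that deserve care are the gluing identity $g^*\alpha=\alpha$ — which is precisely the generating-function property of $h$ — and fixing the orientation so that $\alpha\wedge d\alpha>0$ rather than $<0$.
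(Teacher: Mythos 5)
Your proof takes essentially the same approach as the paper's: you verify that $\alpha$ descends to the mapping torus via the generating-function identity $\mathcal{F}^*\lambda-\lambda=d\tau$, and you check the Reeb conditions by computing $\imath_X\alpha=1$ and $\imath_X d\alpha=0$ directly in the suspension coordinates. Your write-up is somewhat more explicit than the paper's (you spell out that $\imath_X d\alpha\equiv 0$ together with the fact that the weak stable/unstable leaves are $X$-saturated gives condition~(b), and you also check $\alpha\wedge d\alpha>0$, which the paper leaves implicit), but these are expansions of the same argument rather than a different route.
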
 

\begin{proof}
Let us verify that $\imath_X \alpha=1$ and $\imath_X d\alpha=0$. Indeed, for any $(s,r,t)=((s,r),t)\in \mathcal{M} \times \R$, we have 
$$
\alpha(s,r,t) \big(X(s,r,t)\big)=(\lambda(s,r)+dt)\frac{\partial}{\partial t}=1,
$$  
and
$$
d\alpha(s,r,t)(X(s,r,t))=d\lambda(s,r)\frac{\partial}{\partial t}=0.
$$
Besides, for $W\colon(s,r,t)\mapsto (\mathcal{F}(s,r),t-\tau(s,r))$, we have 
\begin{align*}
W^* \alpha (s,r,t) &= \alpha \circ W (s,r,t) = \alpha(\mathcal{F}(s,r),t- \tau(s,r))\\
&=\lambda(\mathcal{F}(s,r))+d(t-\tau(s,r))=\mathcal{F}^*\lambda(s,r)+dt-d\tau(s,r)\\
&=\lambda(s,r)+d\tau(s,r)+dt-d\tau(s,r)=\alpha(s,r,t).
\end{align*}
Therefore, $\alpha$ descends to an adapted contact form on $\Lambda_i^\tau$. 
\end{proof}

Let us also recall how the contact structure looks like in $(x,y,\omega)$-coordinates.  
For each point $X=(x,y,\omega) \in \mathfrak{M}$, we let
\begin{align*}
T_X\mathfrak{M}\supset T_X^0\mathfrak{M} &:=\ker\big( -\sin \omega dx+\cos \omega dy\big) \cap \ker \big(d\omega\big),\\
T_X\mathfrak{M}\supset T_X^\perp\mathfrak{M} &:=\ker \big(\cos \omega dx+\sin \omega dy\big).
\end{align*}
The one-dimensional subbundle $T^0\mathfrak{M}\subset T\mathfrak{M}$ and the two-dimensional subbundle $T^\perp\mathfrak{M}\subset T\mathfrak{M}$ are $D \Phi$-invariant. 
More precisely, for any $t \in \R$, the differential $D\Phi^t$ acts on $T^0\mathfrak{M} \oplus T^\perp\mathfrak{M}$ as follows (see Chernov-Markarian \cite{ChernovMarkarian} for more details):
$$
D\Phi^t(X)=\begin{bmatrix}
1 & 0 \\
0 & D^\perp\Phi^t(X)
\end{bmatrix},\quad \forall\, X \in \mathfrak{M}. 
$$
In particular, $\cos \omega dx+\sin \omega dy$ is the contact form in $(x,y,\omega)$-coordinates, and $T^\perp\mathfrak{M}$ is the associated contact distribution. 

\begin{defi}
	Let $\mathcal{D}_1,\mathcal{D}_2$ be two billiards with $\mathcal{C}^k$ boundaries, for some $k\geq 3$, and let $\Phi_1,\Phi_2$ be the associated billiard flows. If there exist two basic sets $\Lambda_1^{\tau_1}\subset \Omega(\Phi_1)$, $\Lambda_2^{\tau_2}\subset \Omega(\Phi_2)$, and an iso-length-spectral orbit equivalence between $\Phi_1|_{\Lambda_1^{\tau_1}}$ and $\Phi_2|_{\Lambda_2^{\tau_2}}$, we will simply say that $\mathcal{D}_1,\mathcal{D}_2$ are \textit{iso-length-spectral} on $\Lambda_1^{\tau_1},\Lambda_2^{\tau_2}$.
\end{defi}

\begin{theoalph}[Smooth conjugacy of billiard maps of isospectral hyperbolic billiards]\label{theorem main billi}
Let $\mathcal{D}_1,\mathcal{D}_2$ be two billiards with $\mathcal{C}^k$ boundaries, for some $k\geq 3$, and let $\Phi_1,\Phi_2$ be the associated billiard flows.  
Let us consider a basic set $\Lambda_i^{\tau_i}$ for $\Phi_i$, $i=1,2$, and let $\Lambda_i$ be the \emph{horseshoe}\footnote{Let us recall that a \textit{horseshoe} for a diffeomorphism $f$ is a transitive, locally maximal hyperbolic set that is totally disconnected and not finite.}  obtained by projecting $\Lambda_i^{\tau_i}$ onto the first two coordinates $(s_i,r_i)$. 
If $\mathcal{D}_1,\mathcal{D}_2$ are iso-length-spectral on $\Lambda_1^{\tau_1}$, $\Lambda_2^{\tau_2}$, then 
there exists a map $\widetilde{\Psi}\colon (s_1,r_1,t_1)\mapsto (s_2,r_2,t_2)$ 
which conjugates $\Phi_1,\Phi_2$ on $\Lambda_1^{\tau_1},\Lambda_2^{\tau_2}$ respectively. The map $\widetilde{\Psi}$ induces a conjugacy $\Psi\colon \Lambda_1 \to \Lambda_2$ between the respective billiard maps $\mathcal{F}_1|_{\Lambda_1},\mathcal{F}_2|_{\Lambda_2}$ which is $\mathcal{C}^{k-1}$ in Whitney sense, and such that $\Psi^* (ds_2\wedge dr_2)=ds_1\wedge dr_1$ on $\Lambda_1$. 

Moreover, the respective  generating functions $\tau_1,\tau_2$ of $\mathcal{F}_1,\mathcal{F}_2$ satisfy
\begin{equation}\label{cohomology functions gen}
\tau_2 \circ \Psi - \tau_1 = \chi\circ \mathcal{F}_1 - \chi\quad \text{on }\Lambda_1,
\end{equation}
for some function $\chi\colon \Lambda_1 \to \R$ which is $\mathcal{C}^{k-1}$ in Whitney sense, such that 
\begin{equation}\label{liouville im}
\Psi^* \lambda_2-\lambda_1 = d\chi\quad \text{on }\Lambda_1,\quad\text{where }\lambda_i=-r_i ds_i,\ i=1,2.
\end{equation}
Let $\mathcal{I}_i \colon (s_i,r_i)\mapsto (s_i,-r_i)$, $i=1,2$, be the respective time-reversal involutions. If $\Psi^{-1}\circ \mathcal{I}_2 \circ \Psi\circ \mathcal{I}_1|_{\Lambda_1}$ fixes  $\mathcal{F}_1$-orbits, i.e., $\mathcal{I}_2\circ \Psi(x_1)$ and $\Psi \circ\mathcal{I}_1(x_1)$ are in the same $\mathcal{F}_2$-orbit, for all $x_1\in \Lambda_1$, and if there exists a $2$-periodic point $x_1 \in \Lambda_1$ or a point  $x_1 \in \Lambda_1\cap \{r_1=0\}$ whose orbit is dense in $\Lambda_1$ and such that $\mathcal{F}_2^m\circ \Psi(x_1) \in \{r_2=0\}$, for some $m \in \Z$, then $\Psi$, resp. $\chi$, can be chosen in a unique way such that 
\begin{equation}\label{image temp rev}
\Psi \circ \mathcal{I}_1|_{\Lambda_1} = \mathcal{I}_2 \circ \Psi|_{\Lambda_1},\qquad \text{resp. }\chi\circ \mathcal{I}_1|_{\Lambda_1}=-\chi|_{\Lambda_1}.
\end{equation} 
\end{theoalph}

The proof of Theorem \ref{theorem main billi} is given in Section \ref{section rig bill}.
 
\begin{remark}
Theorem \ref{theorem main billi} applies naturally to open dispersing billiards, as those exhibit uniformly hyperbolic dynamics. Yet, even in the case of convex billiards, generically, hyperbolic dynamics arises from \emph{Aubry-Mather} periodic orbits with transverse heteroclinic intersections (see for instance \cite{ArnaudMC,HKS} for more details). Thus, our result may also be applied to the associated horseshoes.
\end{remark}

\begin{remark}\label{stable action spectrum}
The coboundary $\chi$ in Theorem \ref{theorem main billi} can be interpreted as the difference between stable (or unstable) actions for the billiard maps $\mathcal{F}_1,\mathcal{F}_2$. Indeed, fix a $2$-periodic point $p_1\in \Lambda_1$, and let 
$p_2:=\Psi(p_1)\in \Lambda_2$. Let us consider a point $x_1  \in \Lambda_1$ in the stable manifold $\mathcal{W}_{\mathcal{F}_1}^s(p_1)$ of $p_1$, and let $x_2:=\Psi(x_1)\in\mathcal{W}_{\mathcal{F}_2}^s(p_2)\cap \Lambda_2$. 
For $i=1,2$, we define the \textit{stable action} of $x_i$ as the sum of the following convergent series:
$$
\mathcal{A}_{p_i,\mathcal{F}_i}^s(x_i)=\mathcal{A}_i^s(x_i):=\sum_{k=0}^{+\infty} \big(\tau_i \circ \mathcal{F}_i^k(x_i)-\tau_i \circ \mathcal{F}_i^k(p_i)\big).
$$ 
Since the two billiards have the same periodic length data, and since $p_1,p_2$ are $2$-periodic, we have $\tau_1 \circ \mathcal{F}_1^k(p_1)=\tau_2 \circ \mathcal{F}_2^k(p_2)$, for each $k \in \Z$.  Observe that 
$
\lim_{k \to+\infty}\chi \circ \mathcal{F}_1^k(x_1)=\chi(p_1)=0
$ (as $\chi(p_1)=\chi\circ \mathcal{I}_1(p_1)=-\chi(p_1)$, by \eqref{image temp rev}). 
By \eqref{cohomology functions gen}, we thus conclude that
\begin{equation*}
\mathcal{A}_1^s(x_1)-\mathcal{A}_2^s(x_2)=\sum_{k=0}^{+\infty} \big(\tau_1 \circ \mathcal{F}_1^k(x_1)-\tau_2 \circ \mathcal{F}_2^k(x_2)\big)
=\chi(x_1)-\lim_{k \to+\infty}\chi \circ \mathcal{F}_1^k(x_1)=\chi(x_1),
\end{equation*}
i.e., $\chi(x_1)$ is the difference between the stable actions $\mathcal{A}_1^s(x_1)$ and $\mathcal{A}_2^s(\Psi(x_1))$.
\end{remark}

\subsection{Open dispersing billiards without eclipse}

We now discuss the following important example (see for instance \cite{Morita1991,DKL} for more details). 
Fix an integer $\ell \geq 3$. We let $\billiards_{ne}(\ell)\subset \billiards(\ell)$ be the set of all billiards
$\dom = \R^{2}\setminus \bigcup^{\ell}_{i = 1}\obs_{i}\in \billiards(\ell)$ which satisfy the following\\

\noindent \textsc{Non-eclipse condition:} The convex hull  of any two obstacles
is disjoint from any other obstacle.\\

\begin{figure}[h]
	\includegraphics[scale=0.5, trim=0 1.2cm 0 1.1cm, clip]{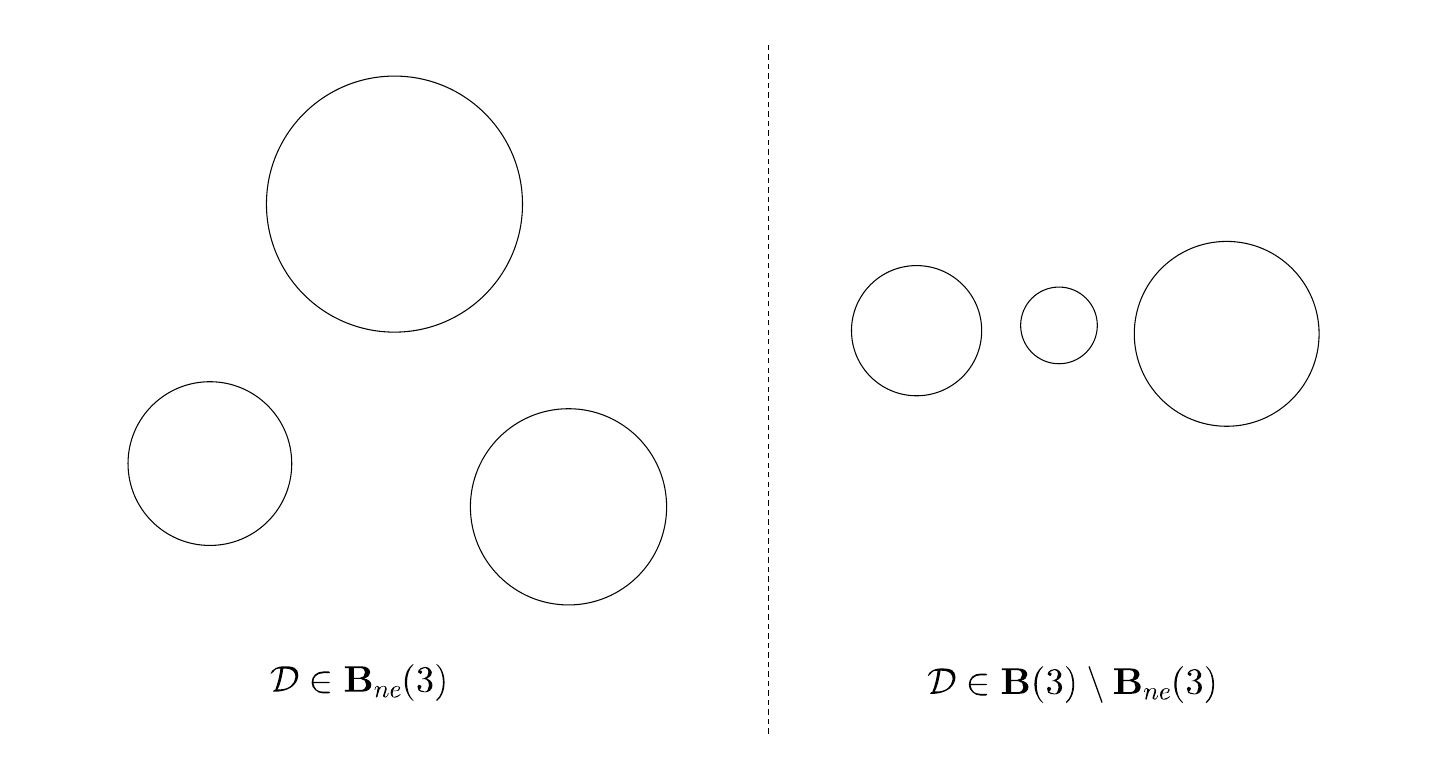}
\end{figure}

Let $\mathcal{F}$, resp. $\Phi$ be the associated billiard map, resp. billiard flow. 
The non-wandering set $\Omega(\mathcal{F})$ is reduced to a single basic set $\Lambda$. Moreover,  $\mathcal{F}|_\Lambda$ is conjugated by some H\"older homeomorphism to the subshift of finite type associated with the transition matrix $(1-\delta_{i,j})_{1 \leq i,j \leq \ell}$, where $\delta_{i,j}=1$, when $i=j$, and $\delta_{i,j}=0$ otherwise, when $i\neq j$. 
In other words, any \textit{admissible} word $\varsigma\in \text{Adm}_{\infty}$, i.e., such that $\varsigma=(\varsigma_j)_{j}\in \{1,\dots,\ell\}^\Z$ with
$\varsigma_{j+1}\neq \varsigma_j$, for all $j \in \Z$, can be realized
by an orbit, and by hyperbolicity of the dynamics, this orbit is
unique. We denote by $x(\varsigma)\in \Omega(\mathcal{F})$ the point with symbolic coding $\varsigma$. Let $\text{Adm} \subset \cup_{j \geq 2} \{1,\dots,\ell\}^j$ be the set of all finite words $\sigma=\sigma_1\dots\sigma_j$, $j \geq 2$, such that $\sigma^\infty:=\cdots\sigma\sigma\sigma\cdots \in \text{Adm}_{\infty}$. It is the set of symbolic codings of periodic orbits. In particular, we may thus define the \emph{marked length spectrum} $\mathcal{MLS}(\mathcal{D})$ as the map 
$$
\mathcal{MLS}(\mathcal{D})\colon \text{Adm} \to \R,\quad \sigma \mapsto \mathcal{L}(\sigma),
$$
where $\mathcal{L}(\sigma)=T_\Phi(x(\sigma^\infty))$ is the perimeter of the periodic orbit encoded by $\sigma$. 

For any billiards $\dom_1,\dom_2\in \billiards_{ne}(\ell)$ with respective billiard maps $\mathcal{F}_1,\mathcal{F}_2$, the restrictions $\mathcal{F}_1|_{\Omega(\mathcal{F}_1)}$, $\mathcal{F}_2|_{\Omega(\mathcal{F}_2)}$ are topologically conjugated in a canonical way, by sending a point $x_1\in \Omega(\mathcal{F}_1)$ to the point $x_2\in \Omega(\mathcal{F}_2)$ with the same coding. The billiard flows $\Phi_1,\Phi_2$ are thus orbit equivalent through some H\"older continuous orbit equivalence.  

\begin{coralph}
	\label{spectral rig cor}
		Fix $\ell \geq 3$, and let $\mathcal{D}_1,\mathcal{D}_2 \in \billiards_{ne}(\ell)$ with $\mathcal{C}^k$ boundaries, for some $k\geq 3$. If $\mathcal{D}_1,\mathcal{D}_2$ have the same marked length spectrum, then the respective billiard maps $\mathcal{F}_1,\mathcal{F}_2$ are conjugated on $\Omega_1:=\Omega(\mathcal{F}_1)$, $\Omega_2:=\Omega(\mathcal{F}_2)$ by a map $\Psi\colon \Omega_1 \to \Omega_2$ that is $\mathcal{C}^{k-1}$ in Whitney sense, such that $\Psi^* (ds_2\wedge dr_2)=ds_1\wedge dr_1$ and $\Psi \circ \mathcal{I}_1 = \mathcal{I}_2 \circ \Psi$ on $\Omega_1$, where $\mathcal{I}_i \colon (s_i,r_i)\mapsto (s_i,-r_i)$, for $i=1,2$, is the time-reversal involution. Moreover, \eqref{cohomology functions gen}-\eqref{liouville im}-\eqref{image temp rev} in Theorem \ref{theorem main billi} hold for some coboundary $\chi\colon \Lambda_1 \to \R$ which is $\mathcal{C}^{k-1}$ in Whitney sense.
\end{coralph}


\noindent{\textbf{Acknowledgements:} We thank P\'eter B\'alint, Sylvain Crovisier,  Jacques F\'ejoz, Andrey Gogolev, Colin Guillarmou, Umberto L. Hryniewicz, Thibault Lefeuvre, Jean-Pierre Marco, Federico Rodriguez Hertz, Disheng Xu for their encouragement and several useful discussions. We are especially grateful to Marie-Claude Arnaud, Jacopo De Simoi, Livio Flaminio, Vadim Kaloshin, and Ke Zhang for many conversations about this work, and for having pointed out a mistake in a previous version of the paper.

\section{Smooth conjugacy classes for $\mathrm{3D}$ Axiom A flows on basic sets}

\subsection{Synchronization of the flows using periodic data}
 
Let us start by recalling the fact that an orbit equivalence between two hyperbolic flows can be upgraded to a flow conjugacy as long as the lengths of associated periodic orbits coincide. 

\begin{prop}\label{upgrading orbit eq}
	Let $k \geq 2$, and let $\Phi_1=(\Phi_1^t)_{t \in \R}$, resp. $\Phi_2=(\Phi_2^t)_{t \in \R}$ be a $\mathcal{C}^k$ Axiom A flow defined on a smooth manifold $M_1$, resp. $M_2$, and let $\Lambda_1$, resp. $\Lambda_2$ be a basic set for $\Phi_1$, resp. $\Phi_2$. Assume that there exists an orbit equivalence $\Psi_0\colon \Lambda_1 \to  \Lambda_2$ differentiable along  $\Phi_1$-orbits, and that
	\begin{equation}\label{same length}
	T_{\Phi_1}(x)=T_{\Phi_2}(\Psi(x)),\quad \text{
	for each } x\in \mathrm{Per}(\Phi_1)\cap \Lambda_1.
	\end{equation}
	Then the flows $\Phi_1,\Phi_2$ are topologically conjugate, i.e., there exists a homeomorphism $\Psi\colon \Lambda_1 \to  \Lambda_2$ such that 
		$$
		\Psi \circ \Phi_1^t(x)=\Phi_2^{t} \circ \Psi(x),\quad  \text{for all } (x,t) \in \Lambda_1\times \R. 
		$$
\end{prop}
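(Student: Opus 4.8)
The plan is to reduce the statement to the Liv\v{s}ic periodic-orbit theorem for topologically transitive hyperbolic flows. Write $X_i:=X_{\Phi_i}$, and let $\theta\colon\Lambda_1\times\R\to\R$ be the time reparametrization attached to the orbit equivalence $\Psi_0$, so that $\Psi_0\circ\Phi_1^t(x)=\Phi_2^{\theta(x,t)}\circ\Psi_0(x)$ for all $(x,t)$. Since $\Psi_0$ is differentiable along $\Phi_1$-orbits, the function $a(x):=\partial_t\theta(x,0)$ is a well-defined, positive, continuous function on $\Lambda_1$, characterized by $L_{X_1}\Psi_0(x)=a(x)\,X_2\circ\Psi_0(x)$; it records the infinitesimal time change. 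First I would record the cocycle identity $\theta(x,s+t)=\theta(\Phi_1^s(x),t)+\theta(x,s)$: it follows from the defining relation of $\Psi_0$ on the dense set of points $x$ for which $\Psi_0(x)$ is not $\Phi_2$-periodic (so that the exponent in $\Phi_2^{\theta(\cdot,\cdot)}\Psi_0(x)$ is unique), and then extends to all of $\Lambda_1$ by continuity. Differentiating in $t$ yields $\partial_t\theta(x,t)=a(\Phi_1^t(x))$, hence $\theta(x,t)=\int_0^t a(\Phi_1^s(x))\,ds$.

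The next step is to seek the conjugacy in the form $\Psi:=\Phi_2^{\rho}\circ\Psi_0$ for a continuous function $\rho\colon\Lambda_1\to\R$ to be found. A direct computation gives $\Psi\circ\Phi_1^t(x)=\Phi_2^{\theta(x,t)+\rho(\Phi_1^t(x))}\circ\Psi_0(x)$ and $\Phi_2^t\circ\Psi(x)=\Phi_2^{t+\rho(x)}\circ\Psi_0(x)$, so $\Psi$ is a flow conjugacy exactly when $\theta(x,t)-t=\rho(x)-\rho(\Phi_1^t(x))$ for all $(x,t)$. Differentiating at $t=0$, this amounts to solving the Liv\v{s}ic cohomological equation $L_{X_1}\rho=1-a$ with $\rho$ continuous; conversely, if $\rho$ solves this equation, both sides of the previous identity have the same derivative along $\Phi_1$-orbits (namely $a(\Phi_1^t(x))-1$) and vanish at $t=0$, so they coincide for every $t$. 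By the Liv\v{s}ic theorem, a continuous solution $\rho$ exists provided the periodic obstruction vanishes, i.e.\ $\int_0^{T_{\Phi_1}(x)}\bigl(1-a(\Phi_1^s(x))\bigr)\,ds=0$ for every $x\in\mathrm{Per}(\Phi_1)\cap\Lambda_1$.

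To verify this, fix such an $x$ with prime period $T_1:=T_{\Phi_1}(x)$. By the integral formula, $\int_0^{T_1}a(\Phi_1^s(x))\,ds=\theta(x,T_1)$. Since $\Phi_1^{T_1}(x)=x$, the relation $\Psi_0\circ\Phi_1^{T_1}(x)=\Psi_0(x)$ shows that $\theta(x,T_1)$ is a period of $\Psi_0(x)$ under $\Phi_2$; as $\theta(x,\cdot)$ is an orientation-preserving homeomorphism of $\R$, it maps one full loop to one full loop, so it is in fact the prime period: $\theta(x,T_1)=T_{\Phi_2}(\Psi_0(x))$, which equals $T_1$ by the iso-length-spectral hypothesis \eqref{same length}. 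Hence the periodic integral is $T_1-T_1=0$, Liv\v{s}ic's theorem applies, and $\Psi:=\Phi_2^{\rho}\circ\Psi_0$ satisfies $\Psi\circ\Phi_1^t=\Phi_2^t\circ\Psi$ on $\Lambda_1\times\R$. It remains to check that $\Psi$ is a homeomorphism of $\Lambda_1$ onto $\Lambda_2$: it is continuous, maps into the flow-invariant set $\Lambda_2$, and both injectivity and surjectivity follow by chasing the conjugacy relation along orbits (using once more that corresponding periods agree); being a continuous bijection between compact metric spaces, it is a homeomorphism.

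I expect the main obstacle to be the verification that the periodic obstruction vanishes, which is exactly the point where the length-spectrum hypothesis enters; a secondary technical point requiring care is the application of Liv\v{s}ic's theorem at the regularity available — the function $a$ is a priori only continuous, but is in fact H\"older since $\Psi_0$ is H\"older and $\theta$ is $\mathcal{C}^{1,\beta}$ in $t$, which is what makes the transfer function $\rho$ continuous.
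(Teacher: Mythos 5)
Your proof is correct and follows essentially the same route as the paper's: both set $a(x)=\partial_t\theta(x,0)$ (called $v_{\Psi_0}$ in the paper), use the iso-length-spectral hypothesis to show the Liv\v{s}ic periodic obstruction for $1-a$ vanishes, invoke Liv\v{s}ic's theorem to produce a transfer function, and then postcompose $\Psi_0$ with a time-shift $\Phi_2^{\rho}$ (the paper writes $\Phi_2^{-u}$, with $u=-\rho$) to obtain the conjugacy. Your write-up is slightly more careful on two points the paper leaves implicit — that $\theta(x,T_{\Phi_1}(x))$ is the \emph{prime} period of $\Psi_0(x)$ rather than merely a period, and that the resulting $\Psi$ is a homeomorphism — but these are refinements of the same argument rather than a different approach.
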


\begin{proof}
The proof is classical but we recall it here for completeness. 

We fix an orbit equivalence $\Psi_0\colon \Lambda_1 \to  \Lambda_2$ that is differentiable along $\Phi_1$-orbits. 
Let $X_1,X_2$ be the respective flow vector fields of $\Phi_1,\Phi_2$, and let  $L_{X_1} \Psi_0$ be the Lie derivative of $\Psi_0$ along $\Phi_1$. As $\Psi_0$ sends $\Phi_1$-orbits to $\Phi_2$-orbits, it holds 
$$
L_{X_1} \Psi_0(x)=v_{\Psi_0}(x) X_2( \Psi_0(x)),\quad \text{for all }x \in \Lambda_1,
$$
for some function $v_{\Psi_0} \colon \Lambda_1\to \R$ which measures the ``speed" of $\Psi_0$ along the flow direction. Observe that $v_{\Psi_0}(x)=\frac{d}{dt}|_{t=0}\theta (x,t)$.

By \eqref{same length}, for each $x\in \mathrm{Per}(\Phi_1)\cap\Lambda_1$ we have 
$$
\int_0^{T_{\Phi_1}(x)}dt=T_{\Phi_1}(x)=T_{\Phi_2}(\Psi_0(x))=\int_0^{T_{\Phi_1}(x)}\frac{d}{ds}|_{s=0}\theta (\Phi_1^t(x),s)dt=\int_0^{T_{\Phi_1}(x)}v_{\Psi_0}(\Phi_1^t(x))dt,
$$
hence
$$
\frac{1}{T_{\Phi_1}(x)}\int_{0}^{T_{\Phi_1}(x)} \big(v_{\Psi_0}(\Phi_1^t(x)) -1\big) dt=0,\quad \text{
	for each } x\in \mathrm{Per}(\Phi_1)\cap \Lambda_1.
$$ 
We deduce from Livsic's theorem (see \cite[Subsection 19.2]{HaKa}) that there exists a continuous function $u \colon \Lambda_1 \to \R$ differentiable along $\Phi_1$-orbits such that  $v_{\Psi_0}-1=L_{X_{1}} u$.  Let us set $\Psi\colon x \mapsto \Phi_2^{- u(x)} \circ \Psi_0 (x)$. Given any $x \in \Lambda_1$, we compute
\begin{align*}
v_{\Psi}(x)X_2(\Psi(x))&=L_{X_{1}}\big(\Phi_2^{- u(x)} \circ \Psi_0 \big)(x)\\
&=\lim_{t\rightarrow0}\frac{1}{t}\Big(\Phi_2^{\theta (x,t)-u(\Phi_1^t(x))}\circ\Psi_0(x)-\Phi_2^{-u(x)}\circ\Psi_0(x)\Big)\\
&=X_2(\Phi_2^{-u(x)}\circ\Psi_0(x))\, \lim_{t\rightarrow0}\frac{1}{t}\Big(\theta (x,t)-u(\Phi_1^t(x))+u(x)\Big)\\
&=\big(v_{\Psi_0} (x) - L_{X_{1}} u (x)\big) X_2\big(\Phi_2^{- u(x)} \circ \Psi_0  (x)\big)=X_2(\Psi (x)),
\end{align*}
i.e., $v_{\Psi}\equiv 1$ on $\Lambda_1$.

As a result, the homeomorphism $\Psi$ is a flow conjugacy between $\Phi_1$ and $\Phi_2$ on $\Lambda_1$:
$$
\Psi \circ \Phi_1^t(x)=\Phi_2^{t} \circ \Psi(x),\quad  \text{for all } (x,t) \in \Lambda_1\times \R. 
$$
\end{proof}

\subsection{Markov families for Axiom A flows on basic sets}\label{subsection markov}

In this part, we recall some classical facts about Markov families for Axiom A flows on basic sets, following the presentation given in \cite{Chernov}. 

	Let $k \geq 2$, and let $\Phi=(\Phi^t)_{t \in \R}$ be a $\mathcal{C}^k$ Axiom A flow defined on a smooth manifold $M$. 

\begin{defi}[Rectangle, proper family] 
	A closed subset $R \subset M$ is called a \emph{rectangle} if there is a small closed codimension one smooth disk $D \subset M$ transverse to the flow $\Phi$ such that $R \subset D$, and for any $x,y \in R$, the point 
	$$
	[x,y]_R:=D \cap \mathcal{W}_{\Phi,\mathrm{loc}}^{cs}(x)\cap \mathcal{W}_{\Phi,\mathrm{loc}}^{cu}(y)
	$$
	exists and also belongs to $R$. A  rectangle $R$ is called \emph{proper} if $R=\overline{\mathrm{int}(R)}$ in the topology of $D$. For any rectangle $R$ and any $x \in R$, we let
	$$
	\mathcal{W}_R^s(x):= R \cap \mathcal{W}_{\Phi,\mathrm{loc}}^{cs}(x),\quad \mathcal{W}_R^u(x):= R \cap \mathcal{W}_{\Phi,\mathrm{loc}}^{cu}(x).
	$$
	
	A finite collection of proper rectangles $\mathcal{R}=\{R_1,\dots,R_m\}$, $m \geq 1$, is called a \emph{proper family of size} $\varepsilon>0$ if:
	\begin{enumerate}
		\item $M=\{\Phi^{t}(\mathcal{S}):t \in [-\varepsilon,0]\}$, where $\mathcal{S}:=R_1\cup \dots \cup R_m$;
		\item $\mathrm{diam}(D_i)< \varepsilon$, for each $i=1,\dots,m$, where $D_i\supset R_i$ is a disk   as above;
		\item for any $i\neq j$, $D_i\cap \{\Phi^{t}(D_j):t \in [0,\varepsilon]\}=\emptyset$ or $D_j\cap \{\Phi^{t}(D_i):t \in [0,\varepsilon]\}=\emptyset$. 
	\end{enumerate}
	The set $\mathcal{S}$ is called a \emph{cross-section} of the flow $\Phi$.
\end{defi}

\begin{notation}
	Let $\mathcal{R}=\{R_1,\dots,R_m\}$ be a proper family with $m \geq 1$ elements. 
	
	The cross-section $\mathcal{S}:=R_1\cup \dots \cup R_m$ is associated with a Poincar\'e map $\mathcal{F}\colon \mathcal{S}\to \mathcal{S}$, where for any $x \in \mathcal{S}$, we let $\mathcal{F}(x):=\Phi^{\tau_\mathcal{S}(x)} (x)$,  the function $\tau_\mathcal{S}\colon \mathcal{S}\to \R_+$ being the first return time on $\mathcal{S}$, i.e., $\tau_\mathcal{S}(x):=\inf\{ t> 0 : \Phi^t(x)\in\mathcal{S} \}>0$, for all $x \in \mathcal{S}$. 
	
	
	Besides, for $*=s,u$ and $x \in R_i$, $i\in \{1,\dots,m\}$, we also let $\mathcal{W}_{\mathcal{F}}^*(x):=\mathcal{W}_{R_i}^*(x)$. 
\end{notation}

\begin{defi}[Markov family]
	Given some small $\varepsilon>0$, and some integer $m\geq 1$, a proper family $\mathcal{R}=\{R_1,\dots,R_m\}$ of size $\varepsilon$, with Poincar\'e map $\mathcal{F}$, is called a \emph{Markov family} if it satisfies the following Markov property:
	for any $x \in \mathrm{int} (R_i)  \cap \mathcal{F}^{-1}(\mathrm{int} (R_j))\cap \mathcal{F}(\mathrm{int} (R_k))$, with $i,j,k\in \{1,\dots,m\}$, it holds 
	$$
	\mathcal{W}_{R_i}^s(x)\subset \overline{\mathcal{F}^{-1}(\mathcal{W}_{R_j}^s(\mathcal{F}(x)))}\quad \text{and}\quad \mathcal{W}_{R_i}^u(x)\subset\overline{\mathcal{F}(\mathcal{W}_{R_k}^u(\mathcal{F}^{-1}(x)))}.
	$$
\end{defi}

\begin{theorem}[see Theorem 4.2 in \cite{Chernov}]\label{Chernov Markov}
	The restriction of an Axiom A flow to any basic set has a Markov family of arbitrary small size.
\end{theorem}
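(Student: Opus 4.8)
The plan is to follow the classical Bowen--Ratner construction of Markov partitions, adapted to the flow setting, and then (since the statement is quoted verbatim) to invoke \cite[Theorem 4.2]{Chernov} for the technical bookkeeping. Throughout, I fix the basic set $\Lambda\subset M$ with its hyperbolic splitting $T_xM=E^s(x)\oplus\R X(x)\oplus E^u(x)$ and its local product structure: there is $\delta_0>0$ such that for $x,y\in\Lambda$ with $d(x,y)<\delta_0$ the bracket $[x,y]:=\mathcal{W}^{cs}_{\Phi,\mathrm{loc}}(x)\cap\mathcal{W}^{cu}_{\Phi,\mathrm{loc}}(y)$ is a single point, depends continuously on $(x,y)$, and again lies in $\Lambda$. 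I also use that $\Phi|_\Lambda$ is expansive and has the shadowing property, both standard consequences of uniform hyperbolicity.

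First I would build a preliminary proper family of \emph{proto-rectangles}. Given a small $\varepsilon>0$, choose a finite $\tfrac{\varepsilon}{4}$-dense subset $\{z_1,\dots,z_N\}$ of $\Lambda$, and through each $z_i$ a small smooth codimension-one disk $D_i$ transverse to $X$ with $\mathrm{diam}(D_i)<\varepsilon$; set $R_i^0$ to be the closure of $\{[x,y]:x,y\in\Lambda\cap D_i\text{ close to }z_i\}$. After a small perturbation of the centres $z_i$ and a mild shrinking one arranges that: (i) $M=\{\Phi^t(\mathcal{S}_0):t\in[-\varepsilon,0]\}$ with $\mathcal{S}_0:=R_1^0\cup\dots\cup R_N^0$; (ii) the overlap/return condition (3) in the definition of a proper family holds, using that for generic pairs of transverse disks one of the two ``crossing within flow time $\varepsilon$'' alternatives is satisfied; and (iii) each $R_i^0$ is a proper rectangle, i.e.\ the closure of its interior in $D_i$. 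This produces a proper family with associated Poincar\'e map $\mathcal{F}_0\colon\mathcal{S}_0\to\mathcal{S}_0$ and first-return time $\tau_{\mathcal{S}_0}$, the latter bounded above and below by positive constants.

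The main step is to upgrade $\mathcal{R}^0=\{R_1^0,\dots,R_N^0\}$ to a family with the Markov property. The key observation is that $\mathcal{F}_0$ restricted to $\Lambda\cap\mathcal{S}_0$ is a topologically hyperbolic homeomorphism: it inherits a hyperbolic local product structure, expansiveness and shadowing from $\Phi|_\Lambda$. Hence each $x\in\Lambda\cap\mathcal{S}_0$ is determined by its $\mathcal{F}_0$-itinerary $(a_n(x))_{n\in\Z}$, where $a_n(x)$ is the index of the rectangle containing $\mathcal{F}_0^n(x)$, and conversely most itineraries are realised by shadowing. Following Bowen, one refines the rectangles by cutting each $R_i^0$ along the (locally finite, then closed-up) collection of stable and unstable fibres of the form $\mathcal{F}_0^{\mp1}\big(\mathcal{W}^{s/u}_{R_j^0}(\text{boundary fibres of }R_j^0)\big)$, and defines the new rectangles to be the closures of the atoms of the resulting partition — equivalently, one groups points having the same itinerary over a long but finite window and passes to the limit. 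Exactly as in the diffeomorphism case one checks that the resulting finite family $\mathcal{R}=\{R_1,\dots,R_m\}$ consists of proper rectangles of size still $<\varepsilon$ and satisfies $\mathcal{W}^s_{R_i}(x)\subset\overline{\mathcal{F}^{-1}(\mathcal{W}^s_{R_j}(\mathcal{F}(x)))}$ together with the dual inclusion for unstable fibres, i.e.\ the Markov property; and since the refinement only cuts the rectangles without altering the cross-section $\mathcal{S}_0$ beyond the flow-time bound, conditions (1)--(3) of a proper family persist. As $\varepsilon>0$ was arbitrary, this yields Markov families of arbitrarily small size.

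The hard part is this last refinement step: one must control the boundaries so that cutting along images of stable/unstable fibres terminates in a \emph{finite} family of \emph{genuinely proper} rectangles (no degenerate atoms, closures behaving well), while \emph{simultaneously} preserving the flow-box conditions (1)--(3) — this is precisely where the careful bookkeeping of Bowen's and Ratner's arguments, the transversality arranged in the preliminary step, and the uniform return-time bounds are used. Rather than reproduce this, we invoke the construction carried out by Chernov in \cite[Theorem 4.2]{Chernov}, which gives the statement in exactly the form needed here.
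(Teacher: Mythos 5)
The paper gives no independent proof of this theorem; it is stated as a direct quotation of \cite[Theorem 4.2]{Chernov}. Your sketch correctly outlines the Bowen--Ratner construction underlying Chernov's argument, and since you ultimately defer to the same reference for the delicate refinement step, your approach agrees with the paper's.
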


\begin{figure}[h]
	\includegraphics[scale=0.5, trim=0 0.5cm 0 0.5cm, clip]{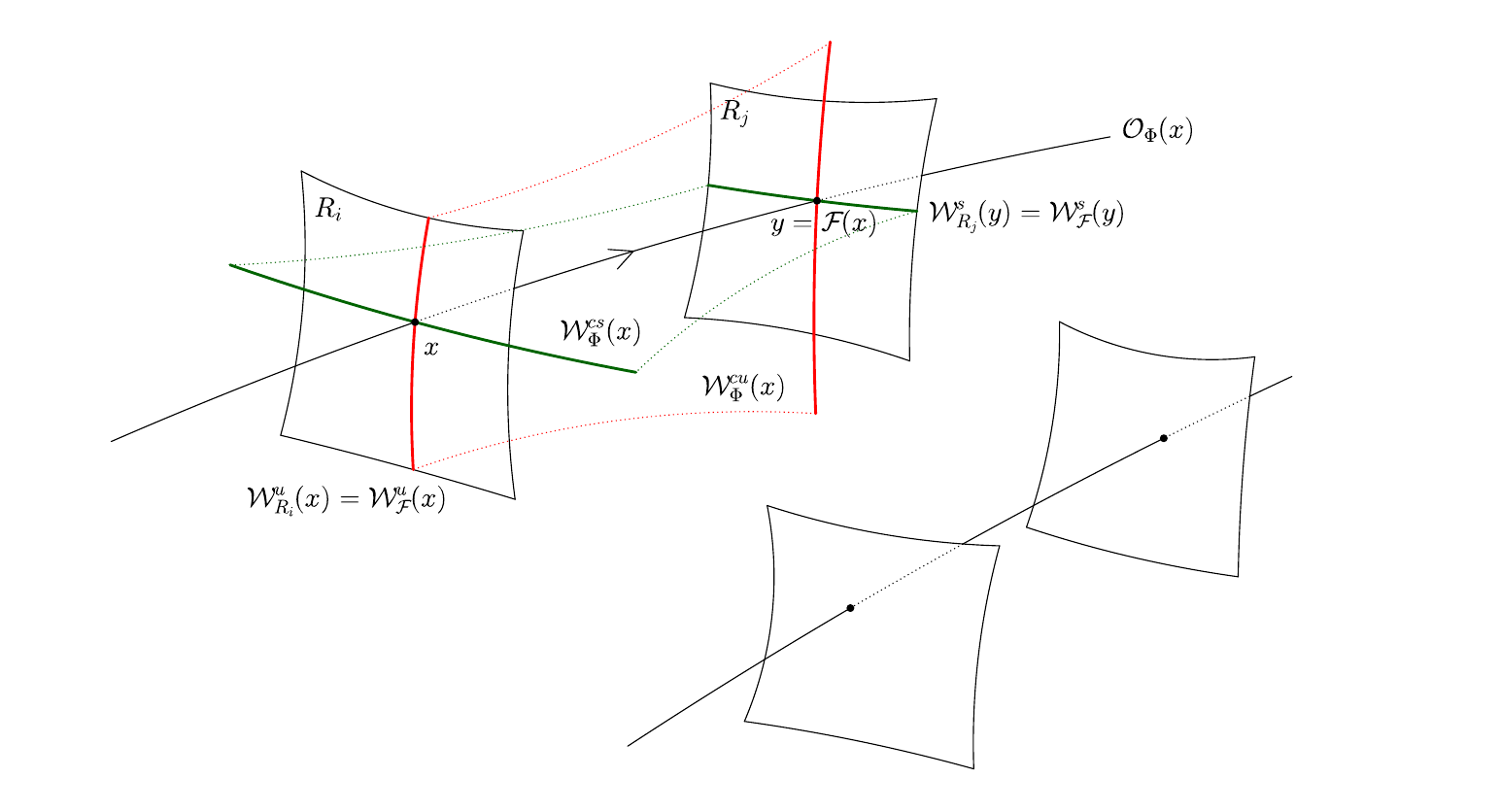}
	\caption{Markov family for the flow $\Phi$.}
\end{figure}

\subsection{Quadrilaterals and temporal displacements}

Let $\Phi=(\Phi^t)_{t \in \R}$ be a $\mathcal{C}^k$ Axiom A flow  on a smooth manifold $M$, with $k \geq 2$, and fix a basic set $\Lambda$ for $\Phi$.

\begin{defi}[Quadrilaterals]
	A \emph{quadrilateral} is a quadruple $\mathscr{Q}=(x_0,x_1,x_2,x_3)\subset \Lambda^4$ such that $x_1 \in \mathcal{W}_{\Phi,\mathrm{loc}}^s(x_0)$, $x_2 \in \mathcal{W}_{\Phi,\mathrm{loc}}^u(x_1)$ and $x_3 \in \mathcal{W}_{\Phi,\mathrm{loc}}^s(x_2)\cap \mathcal{W}_{\Phi,\mathrm{loc}}^{cu}(x_0)$. 
	We let $x_4=x_4(\mathscr{Q}):= \mathcal{W}_{\Phi,\mathrm{loc}}^{c}(x_0)\cap \mathcal{W}_{\Phi,\mathrm{loc}}^u(x_3)$. In particular, $x_4=\Phi^{t}(x_0)$, for some time $t=t(\mathscr{Q})\in \R$. 
\end{defi}

Let us consider a proper Markov family $\mathcal{R}=\{R_1,\dots,R_m\}$ for ${\Phi}_{|\Lambda}$ of size $\varepsilon$, for some integer $m\geq 1$ and some small $\varepsilon>0$. Let $\mathcal{F}$ be the associated Poincar\'e map, and set $\mathcal{S}:=R_1\cup \dots \cup R_m$.  We denote by  $\overline{\Lambda}:=\Lambda \cap \mathcal{S}$ the trace of $\Lambda$ on $\mathcal{S}$.  

We say that a quadrilateral $\mathscr{Q}=(x_0,x_1,x_2,x_3)\subset \Lambda^4$ is \emph{$\mathcal{R}$-good} if $x_0 \in R_i$ for some $i=i(\mathscr{Q})\in\{1,\dots,m\}$, and $x_j \in \cup_{t \in (-\frac \varepsilon 2,\frac \varepsilon 2)}\Phi^t(R_i)$, for each $j\in \{1,\dots,4\}$. Note that, up to time translation, there is no loss of generality to assume that $x_0 \in \mathcal{S}$. 
For any such quadrilateral, and for $j\in \{1,\dots,4\}$, we denote by $\bar x_j$ the projection along the flow line of $x_j$ on $R_i$, and we let $\overline{\mathscr{Q}}:=(\bar x_0,\bar x_1,\bar x_2,\bar x_3)$. Note that $\bar x_0,\dots,\bar x_3 \in \overline{\Lambda}$; besides, $\bar x_1 \in \mathcal{W}_{R_i}^s(\bar x_0)$, $\bar x_2 \in \mathcal{W}_{R_i}^u(\bar x_1)$, and $\bar x_3 \in \mathcal{W}_{R_i}^s(\bar x_2)\cap \mathcal{W}_{R_i}^{u}(\bar x_0)$.

\begin{defi}[s/u-holonomies]
	Fix $i\in\{1,\dots,m\}$, and let $z_0,z_1\in R_i\cap\Lambda$ be such that $z_1\in \mathcal{W}^{s}_{R_i}(z_0)$. We define the \emph{stable holonomy} $H_\mathcal{S}^s(z_0,z_1)\in \R$ as the time $t\in \R$ with smallest absolute value $|t|$ such that $\Phi^{t}(z_1)\in \mathcal{W}_{\Phi,\mathrm{loc}}^s(z_0)$.  
	Similarly, for any $z_0,z_1\in R_i\cap\Lambda$, $z_1\in \mathcal{W}^{u}_{R_i}(z_0)$, we define the \emph{unstable holonomy} $H_\mathcal{S}^u(z_0,z_1)\in \R$ as the time $t\in \R$ with smallest absolute value $|t|$ such that $\Phi^{t}(z_1)\in \mathcal{W}_{\Phi,\mathrm{loc}}^u(z_0)$.  
\end{defi}

\begin{lemma}\label{stable hol}
	For any $i\in\{1,\dots,m\}$, and for any $z_0,z_1 \in \mathcal{W}_{R_i}^s(z_0)$, it holds
	$$
	H_\mathcal{S}^s(z_0,z_1)=\sum_{j=0}^{+\infty} \tau_{\mathcal{S}}(\mathcal{F}^j(z_1))- \tau_{\mathcal{S}}(\mathcal{F}^j(z_0)). 
	$$
\end{lemma}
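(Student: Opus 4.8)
The plan is to compute the stable holonomy time $H_\mathcal{S}^s(z_0,z_1)$ by relating it to the accumulated differences of first-return times along the forward orbits of $z_0$ and $z_1$. The key observation is that since $z_1 \in \mathcal{W}^s_{R_i}(z_0)$, the points $z_0$ and $z_1$ lie on the same local weak-stable leaf up to a small time shift; precisely, $\Phi^{H_\mathcal{S}^s(z_0,z_1)}(z_1)$ lies on the \emph{strong} stable leaf $\mathcal{W}^s_{\Phi,\mathrm{loc}}(z_0)$, so the two strong-stable-equivalent points converge together under the flow. The idea is to track how the ``temporal displacement'' between the two orbits evolves as we apply the Poincar\'e map $\mathcal{F}$ repeatedly.

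First I would set $w_j := \mathcal{F}^j(z_0)$ and $w_j' := \mathcal{F}^j(z_1)$ for $j \geq 0$, and define $t_j \in \R$ to be the time with smallest absolute value such that $\Phi^{t_j}(w_j')\in\mathcal{W}^s_{\Phi,\mathrm{loc}}(w_j)$; by definition $t_0 = H_\mathcal{S}^s(z_0,z_1)$. By the Markov property, $w_j'\in\mathcal{W}^s_{R_i}(w_j)$ remains on the local stable leaf of $w_j$ inside the appropriate rectangle for all $j\geq 0$ (this is where the Markov family structure is used: stable leaves are mapped into stable leaves), so each $t_j$ is well-defined and $t_j\to 0$ as $j\to\infty$ because the flow contracts strong stable leaves and the return times are bounded below, forcing the displacement to shrink. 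Next I would establish the recursion
\begin{equation*}
t_{j} = t_{j+1} + \tau_\mathcal{S}(w_j') - \tau_\mathcal{S}(w_j).
\end{equation*}
This follows by writing out $\mathcal{F}(w_j') = \Phi^{\tau_\mathcal{S}(w_j')}(w_j')$ and $\mathcal{F}(w_j) = \Phi^{\tau_\mathcal{S}(w_j)}(w_j)$, noting that the point $\Phi^{t_j}(w_j')$ on the strong stable leaf of $w_j$ flows, after time $\tau_\mathcal{S}(w_j)$, to a point on the strong stable leaf of $\mathcal{F}(w_j)=w_{j+1}$, and comparing with where $w_{j+1}' = \Phi^{\tau_\mathcal{S}(w_j')}(w_j')$ sits: the discrepancy in flow-times is exactly $\tau_\mathcal{S}(w_j') - \tau_\mathcal{S}(w_j) - (t_j - t_{j+1})$, which must vanish by uniqueness of the holonomy time on the strong stable leaf.

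Telescoping the recursion over $j = 0, 1, \dots, N-1$ gives $t_0 = t_N + \sum_{j=0}^{N-1}\big(\tau_\mathcal{S}(\mathcal{F}^j(z_1)) - \tau_\mathcal{S}(\mathcal{F}^j(z_0))\big)$, and letting $N\to\infty$ with $t_N\to 0$ yields the claimed identity, provided the series converges. The main obstacle is precisely this convergence and the $t_N \to 0$ claim: one must show the individual terms $\tau_\mathcal{S}(\mathcal{F}^j(z_1)) - \tau_\mathcal{S}(\mathcal{F}^j(z_0))$ decay geometrically. This follows from two ingredients: the uniform exponential contraction on strong stable leaves (so $d(w_j, w_j')$ decays geometrically in $j$), together with the regularity of the first-return time function $\tau_\mathcal{S}$ — it is at least H\"older (indeed smooth transverse to the flow, being defined by transversality to smooth disks), so $|\tau_\mathcal{S}(w_j') - \tau_\mathcal{S}(w_j)| \lesssim d(w_j,w_j')^\gamma$ decays geometrically, giving both absolute convergence of the series and, via the same recursion bound $|t_j| \leq \sum_{k\geq j}|\tau_\mathcal{S}(w_k') - \tau_\mathcal{S}(w_k)|$, the vanishing of $t_N$ as $N\to\infty$.
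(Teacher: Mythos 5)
Your proof is correct and captures the same underlying idea as the paper's: iterate forward, use contraction along the strong stable leaf together with the Lipschitz/H\"older regularity of $\tau_\mathcal{S}$ to show the return-time differences decay, and recover $H_\mathcal{S}^s(z_0,z_1)$ as their sum. The presentation differs in that you set up an exact recursion $t_j = t_{j+1} + \tau_\mathcal{S}(w_j')-\tau_\mathcal{S}(w_j)$ for the running holonomy time and telescope, whereas the paper instead compares $\mathcal{F}^n(z_1)$ with $\Phi^{t_n+H}(z_1)$ and passes to the limit through an $\varepsilon$-estimate using the triangle inequality and the local manifold structure; both come to the same thing, and your exact telescoping identity arguably makes the bookkeeping cleaner, provided one records (as you do) that $t_j$ remains small enough for the local uniqueness of the holonomy time to justify the recursion at every step.
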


\begin{proof}
	Fix $i\in\{1,\dots,m\}$, and let $z_0,z_1\in R_i\cap\Lambda$ be such that $z_1\in \mathcal{W}^s_{R_i}(z_0)$. We abbreviate $H:=H_\mathcal{S}^s(z_0,z_1)$ and set $z_2=\Phi^H(z_1)$. Fix $\varepsilon>0$  arbitrarily small. As $z_1 \in \mathcal{W}_{R_i}^s(z_0)$ and  $z_2 \in \mathcal{W}_{\Phi,\mathrm{loc}}^s(z_0)$, for  $n\gg 1$ sufficiently large,   it holds 
	\begin{equation}\label{ineg hyper}
	\begin{array}{r}
	d(\mathcal{F}^n(z_0),\mathcal{F}^n(z_1))< \varepsilon,\\
	d(\Phi^{t_n}(z_0),\Phi^{t_n}(z_2))< \varepsilon,
	\end{array}
	\end{equation}
	with $\mathcal{F}^n(z_0)=\Phi^{t_n}(z_0)$ and $t_n:=\sum_{j=0}^{n-1} \tau_{\mathcal{S}}(\mathcal{F}^j(z_0))$. Set $u_n:=\sum_{j=0}^{n-1} \tau_{\mathcal{S}}(\mathcal{F}^j(z_1))$, so that $\mathcal{F}^n(z_1)=\Phi^{u_n}(z_1)$. The points $\mathcal{F}^n(z_0), \mathcal{F}^n(z_1)$ are exponentially close, and $\tau_\mathcal{S}$ is Lipschitz, hence the sequence $(u_n-t_n)_{n\geq 1}$ converges to some limit $\ell\in \R$. Since $z_2=\Phi^H(z_1)$, and by the triangular inequality, \eqref{ineg hyper} yields
	$$
	d(\Phi^{u_n}(z_1),\Phi^{t_n+H}(z_1))<2 \varepsilon. 
	$$ 
	As we are considering local manifolds, 
	we deduce that  
	$
	|u_n-t_n-H|<C \varepsilon
	$,
	for some uniform constant $C>0$. Letting $n \to +\infty$, we get $\ell=H$, i.e.,
	$$
	H=\sum_{j=0}^{+\infty} \tau_{\mathcal{S}}(\mathcal{F}^j(z_1))- \tau_{\mathcal{S}}(\mathcal{F}^j(z_0)). 
	$$
\end{proof}

Using the same ideas as in Lemma \ref{stable hol}, we have the following
\begin{lemma}\label{unstable hol}
	For any $i\in\{1,\dots,m\}$, and for any $z_0,z_1 \in \mathcal{W}_{R_i}^u(z_0)$, it holds
	$$
	H_\mathcal{S}^u(z_0,z_1)=\sum_{j=-\infty}^{-1} \tau_{\mathcal{S}}(\mathcal{F}^j(z_0))- \tau_{\mathcal{S}}(\mathcal{F}^j(z_1)). 
	$$
\end{lemma}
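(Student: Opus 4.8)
The plan is to repeat the argument of Lemma~\ref{stable hol} essentially verbatim, except that one now iterates the Poincar\'e map $\mathcal{F}$ \emph{backwards}: the uniform contraction relevant to the unstable holonomy — of the unstable leaf $\mathcal{W}^u_{R_i}$ under $\mathcal{F}$, and of the local unstable flow-leaf $\mathcal{W}^u_{\Phi,\mathrm{loc}}$ under $\Phi^{-t}$ — takes place in negative time, so the telescoping series that appears is indexed by negative $j$.

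Concretely, I would fix $i\in\{1,\dots,m\}$ and $z_0,z_1\in R_i\cap\Lambda$ with $z_1\in\mathcal{W}^u_{R_i}(z_0)$, set $H:=H_\mathcal{S}^u(z_0,z_1)$ and $z_2:=\Phi^H(z_1)\in\mathcal{W}^u_{\Phi,\mathrm{loc}}(z_0)$, and for $n\geq 1$ put
$$
t_n:=\sum_{j=1}^{n}\tau_\mathcal{S}(\mathcal{F}^{-j}(z_0)),\qquad u_n:=\sum_{j=1}^{n}\tau_\mathcal{S}(\mathcal{F}^{-j}(z_1)),
$$
so that $\mathcal{F}^{-n}(z_0)=\Phi^{-t_n}(z_0)$ and $\mathcal{F}^{-n}(z_1)=\Phi^{-u_n}(z_1)=\Phi^{-u_n-H}(z_2)$. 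The first step is to record the two exponential-closeness facts: $\mathcal{F}^{-n}(z_0)$ and $\mathcal{F}^{-n}(z_1)$ are exponentially close as $n\to+\infty$ (backward contraction along $\mathcal{W}^u_{R_i}$), and $\Phi^{-t_n}(z_0)$ and $\Phi^{-t_n}(z_2)$ are exponentially close as well (backward contraction along $\mathcal{W}^u_{\Phi,\mathrm{loc}}$). Since $\tau_\mathcal{S}$ is Lipschitz and its successive base points along a given leaf are exponentially close, the sequence $(u_n-t_n)_n$ is Cauchy, hence converges to some $\ell\in\R$, with $\ell=\sum_{j=1}^{\infty}\big(\tau_\mathcal{S}(\mathcal{F}^{-j}(z_1))-\tau_\mathcal{S}(\mathcal{F}^{-j}(z_0))\big)=-\sum_{j=-\infty}^{-1}\big(\tau_\mathcal{S}(\mathcal{F}^{j}(z_0))-\tau_\mathcal{S}(\mathcal{F}^{j}(z_1))\big)$.

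To identify $\ell$ with $-H$, I would combine the two closeness estimates via the triangular inequality to obtain $d\big(\Phi^{-t_n}(z_2),\Phi^{-u_n-H}(z_2)\big)<2\varepsilon$ for $n$ large; since both points lie on a short orbit arc of $z_2$ inside a flow box of uniform size (the $u_n-t_n$ are bounded, being convergent), transversality of the cross-section to the flow forces $|t_n-u_n-H|<C\varepsilon$ for a uniform constant $C$, exactly as in the proof of Lemma~\ref{stable hol}. Letting $n\to+\infty$ and $\varepsilon\to 0$ yields $\ell=-H$, which rearranges into the claimed formula. The only point requiring care — and it is pure bookkeeping, not a genuine obstacle — is keeping the sign conventions straight: the unstable holonomy is read off by flowing $z_1$ \emph{forward} by $H$, whereas the return-time sums $t_n,u_n$ accumulate in \emph{negative} time, which is precisely why the resulting series runs over $j<0$ and why $z_0,z_1$ occur in the stated order.
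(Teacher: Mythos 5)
Your proof is correct and is precisely what the paper intends: the paper gives no separate argument for this lemma, merely writing ``Using the same ideas as in Lemma~\ref{stable hol}.'' You have carried out the backward-time mirror of that argument faithfully, with the sign bookkeeping ($\ell = -H$, the index change $j\mapsto -j$) handled correctly.
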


Let $\mathscr{Q}=(x_0,x_1,x_2,x_3)\subset \Lambda^4$ be a $\mathcal{R}$-good quadrilateral, with $x_0 \in R_i$, $i \in \{1,\dots,m\}$. Let $x_4=x_4(\mathscr{Q})$, and let $\overline{\mathscr{Q}}:=(\bar x_0,\bar x_1,\bar x_2,\bar x_3)$. As $\bar x_1 \in \mathcal{W}_{R_i}^s(\bar x_0)$, $\bar x_2 \in \mathcal{W}_{R_i}^u(\bar x_1)$, and $\bar x_3 \in \mathcal{W}_{R_i}^s(\bar x_2)\cap \mathcal{W}_{R_i}^{u}(\bar x_0)$, we may define the \emph{temporal displacement} $H(\mathscr{Q})\in \R$ as 
\begin{equation}\label{temporal displacemetn}
H(\mathscr{Q}):=H_\mathcal{S}^s(\bar x_0,\bar x_1)+H_\mathcal{S}^u(\bar x_1,\bar x_2)+H_\mathcal{S}^s(\bar x_2,\bar x_3)+H_\mathcal{S}^u(\bar x_3,\bar x_0).
\end{equation}
By Lemma \ref{stable hol} and \ref{unstable hol}, we have:
\begin{align}\label{limite H}
H(\mathscr{Q})&=\lim\limits_{n\to +\infty}\bigg[\sum_{j=-n}^{n} -\tau_{\mathcal{S}}(\mathcal{F}^j(\bar x_0))+ \tau_{\mathcal{S}}(\mathcal{F}^j(\bar x_1))-\tau_{\mathcal{S}}(\mathcal{F}^j(\bar x_2))+\tau_{\mathcal{S}}(\mathcal{F}^j(\bar x_3))\bigg]\nonumber \\
&=\lim_{n\to +\infty} \big[-\tau_\mathcal{S}^n(\bar x_0)+\tau_\mathcal{S}^n(\bar x_1)-\tau_\mathcal{S}^n(\bar x_2)+\tau_\mathcal{S}^n(\bar x_3)\big],
\end{align}
where for any point $z \in \mathcal{S}$, and for any integer $n \geq 0$, we let 
\begin{equation}\label{itere tau ciao}
\tau_\mathcal{S}^n(z):=\sum_{j=-n}^{n} \tau_{\mathcal{S}}(\mathcal{F}^j(z)).
\end{equation}

\begin{figure}[h]
	\includegraphics[scale=1, trim=0 1cm 0 0, clip]{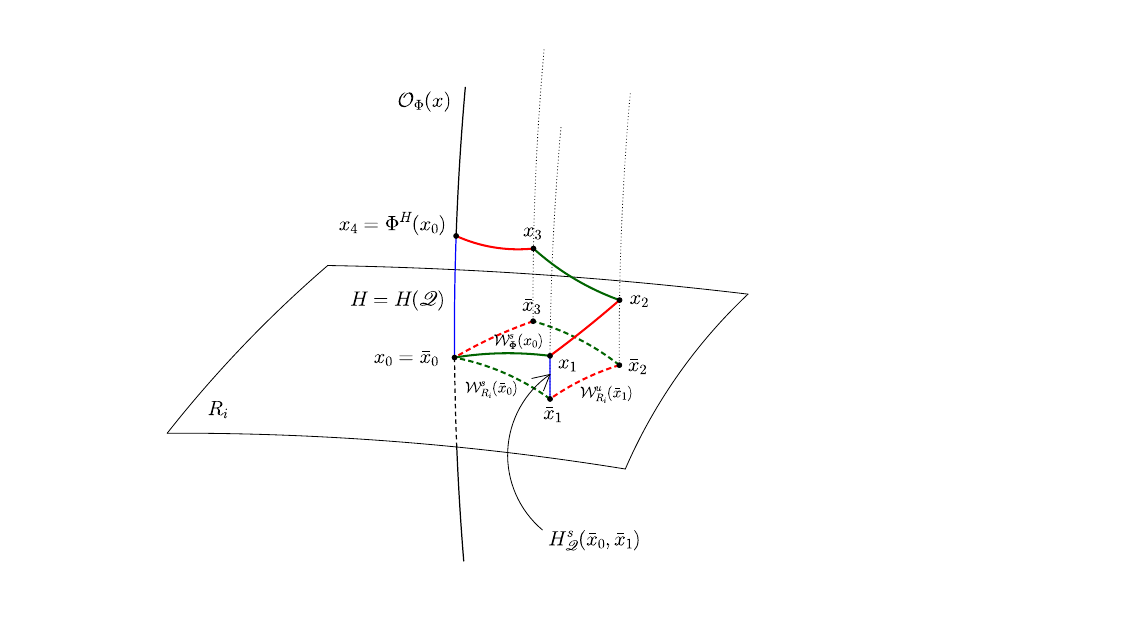}
	\caption{Quadrilaterals and temporal displacements.}
\end{figure}

\subsection{Periodic approximations of temporal displacements}

Let us recall the following fact. 
\begin{lemma}\label{continuity holonomies}
	For each $i \in \{1,\dots,m\}$, the stable holonomies $H_\mathcal{S}^s(y_0,y_1)$, resp. unstable holonomies $H_\mathcal{S}^u(z_0,z_1)$, depend continuously on the points $y_0, y_1 \in R_i\cap \Lambda$, $y_1 \in \mathcal{W}_{R_i}^{s}(y_0)$, resp. on the points $z_0, z_1 \in R_i\cap \Lambda$, $z_1 \in \mathcal{W}_{R_i}^{u}(z_0)$. 
\end{lemma}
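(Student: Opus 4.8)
The plan is to exploit the series representations of the holonomies furnished by Lemma \ref{stable hol} and Lemma \ref{unstable hol}, and to show that these series converge \emph{uniformly} over the (compact) sets of stable, resp. unstable, pairs, so that the continuity of the holonomies follows from the continuity of each finite partial sum.

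First I would fix $i \in \{1,\dots,m\}$ and record the basic hyperbolicity estimate on the cross-section: by uniform hyperbolicity of $\Lambda$ together with compactness, there are constants $C>0$ and $\mu \in (0,1)$ such that, for every stable pair $z_0,z_1 \in R_i\cap\Lambda$ with $z_1 \in \mathcal{W}^s_{R_i}(z_0)$, one has $d(\mathcal{F}^j(z_0),\mathcal{F}^j(z_1)) \le C\mu^j$ for all $j \ge 0$, and symmetrically $d(\mathcal{F}^{-j}(z_0),\mathcal{F}^{-j}(z_1)) \le C\mu^j$ for all $j \ge 0$ whenever $z_1 \in \mathcal{W}^u_{R_i}(z_0)$. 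Since the first return time $\tau_{\mathcal{S}}$ is Lipschitz on $\Lambda \cap \mathcal{S}$ — with Lipschitz constant $L$, say — the $j$-th summand in the series of Lemma \ref{stable hol} is bounded in absolute value by $LC\mu^j$, \emph{uniformly} over all stable pairs in $R_i\cap\Lambda$; likewise the $j$-th summand of the unstable series in Lemma \ref{unstable hol} is bounded by $LC\mu^{|j|}$. By the Weierstrass $M$-test, both series converge uniformly on the respective sets of pairs.

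Next, for each $N \ge 1$ the partial sum $(z_0,z_1)\mapsto \sum_{j=0}^{N}\big(\tau_{\mathcal{S}}(\mathcal{F}^j(z_1))-\tau_{\mathcal{S}}(\mathcal{F}^j(z_0))\big)$ is continuous, being a finite sum of compositions of the continuous maps $\mathcal{F}$ and $\tau_{\mathcal{S}}$ on $\Lambda\cap\mathcal{S}$. A uniform limit of continuous functions being continuous, I conclude that $(z_0,z_1)\mapsto H^s_{\mathcal{S}}(z_0,z_1)$ is continuous on $\{(z_0,z_1)\in (R_i\cap\Lambda)^2 : z_1\in\mathcal{W}^s_{R_i}(z_0)\}$. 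The argument for $H^u_{\mathcal{S}}$ is identical, using Lemma \ref{unstable hol} and the backward exponential estimate above.

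The only point requiring a little care — and the place where the hypotheses genuinely enter — is the uniform exponential contraction of the $\mathcal{F}$-iterates of a stable pair (and, backward, of an unstable pair): this is where one uses that $\Lambda$ is a uniformly hyperbolic compact set and that the $R_i$ are small sections transverse to $\Phi$, so that $\mathcal{W}^s_{R_i}(z_0)$ proximity is governed by proximity along the local strong stable manifolds $\mathcal{W}^s_{\Phi,\mathrm{loc}}$, on which $D\Phi^t$ contracts uniformly. Everything else is the routine combination of the $M$-test with ``a uniform limit of continuous functions is continuous''.
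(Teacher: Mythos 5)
Your proof is correct in its main lines, but it takes a genuinely different — and somewhat more delicate — route than the paper's. The paper argues geometrically, using only the continuity of the invariant laminations: the holonomy time is characterized as the unique $t$ (of smallest $|t|$) with
$\Phi^{t}(y_1)\in\mathcal{W}^s_{\Phi,\mathrm{loc}}(y_0)$, equivalently
\[
\mathcal{W}_{\Phi,\mathrm{loc}}^s(y_0)\cap \mathcal{W}_{\Phi,\mathrm{loc}}^c(y_1)=\{\Phi^{H_\mathcal{S}^s(y_0,y_1)}(y_1)\},
\]
and since both laminations vary continuously, so does this intersection point and hence the time parameter. This argument is soft, short, and needs no information about the return-time function beyond the definition of the holonomy.

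Your argument instead pushes the series formula from Lemma \ref{stable hol} (resp.\ \ref{unstable hol}) and the Weierstrass $M$-test. This is a legitimate and more quantitative route (it would even deliver a modulus of continuity if you kept track of the constants), but it silently assumes that $\mathcal{F}$ and $\tau_{\mathcal{S}}$ are continuous on $\Lambda\cap\mathcal{S}$ so that each partial sum is continuous. For a Markov family this is not automatic: the Poincaré map and the first-return time are generically discontinuous across the boundaries of the rectangles (the boundaries lie on stable/unstable leaves and can intersect $\Lambda$), and the partial sums you build involve $\mathcal{F}^j$ for many $j$. The paper's geometric proof circumvents this boundary issue entirely because it never passes through the section dynamics; if you want to keep the series approach you should either restrict to interiors of cylinders and argue by a limiting procedure, or make explicit the one-sided/lower-semicontinuity structure coming from the Markov boundaries. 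Aside from this caveat, the hyperbolic contraction estimate and the $M$-test step are fine.
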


\begin{proof}
	Let us consider the case where $y_0, y_1 \in R_i\cap \Lambda$, $y_1 \in \mathcal{W}_{R_i}^{s}(y_0)$, the other case is analogous. By definition, the stable holonomy $H_\mathcal{S}^s(y_0,y_1)$ satisfies 
	$$
	\mathcal{W}_{\Phi,\mathrm{loc}}^s(y_0)\cap \mathcal{W}_{\Phi,\mathrm{loc}}^c(y_1)=\{\Phi^{H_\mathcal{S}^s(y_0,y_1)}(y_1)\}.
	$$
	As the invariant manifolds vary continuously, the intersection of the two sets on the left hand side depends continuously on the pair $y_0,y_1$, with $y_1 \in \mathcal{W}_{R_i}^{s}(y_0)$. By looking at the right hand side, we conclude that the holonomies are continuous. 
\end{proof}

The main goal of this section is to show the following proposition, whose content already appears in the work of Otal \cite{Otal}. 
\begin{prop}\label{main propo}
	For any $\mathcal{R}$-good quadrilateral $\mathscr{Q}=(x_0,x_1,x_2,x_3)\in \Lambda^4$, the quantity  $H(\mathscr{Q})$ is determined by the lengths of periodic orbits.
\end{prop}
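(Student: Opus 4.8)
The plan is to pass to the symbolic model of the Poincar\'e return map, encode the quadrilateral combinatorially, and then build, for each large $N$, four periodic orbits whose lengths combine in an alternating sum that converges to $H(\mathscr{Q})$ as $N\to\infty$; the argument follows Otal's scheme. Concretely, I would first fix a proper Markov family $\mathcal{R}=\{R_{1},\dots,R_{m}\}$ for $\Phi|_{\Lambda}$ (Theorem \ref{Chernov Markov}), with Poincar\'e map $\mathcal{F}$, cross-section $\mathcal{S}$, first return time $\tau_{\mathcal{S}}$, and $\overline{\Lambda}=\Lambda\cap\mathcal{S}$. Recall that $\mathcal{F}|_{\overline{\Lambda}}$ is conjugated, by a bi-H\"older coding $\pi\colon\Sigma\to\overline{\Lambda}$, to a transitive subshift of finite type $(\Sigma,\sigma)$, that $\tau_{\mathcal{S}}\circ\pi$ is H\"older, and hence that $\tau_{\mathcal{S}}\circ\pi=\sum_{n\ge0}\tau^{(n)}$, where $\tau^{(n)}$ depends only on the coordinates in $\{-n,\dots,n\}$ and $\|\tau^{(n)}\|_{\infty}\le C\lambda^{n}$ for some $C>0$, $\lambda\in(0,1)$. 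Up to a time translation one may assume $\bar x_{0},\dots,\bar x_{3}\in\mathcal{S}$; writing $\bar x_{0}=\pi(A.B)$ with past $A=(a_{k})_{k<0}$ and future $B=(b_{k})_{k\ge0}$, the quadrilateral relations $\bar x_{1}\in\mathcal{W}^{s}_{R}(\bar x_{0})$, $\bar x_{2}\in\mathcal{W}^{u}_{R}(\bar x_{1})$, $\bar x_{3}\in\mathcal{W}^{s}_{R}(\bar x_{2})\cap\mathcal{W}^{u}_{R}(\bar x_{0})$ translate into $\bar x_{1}=\pi(C.B)$, $\bar x_{2}=\pi(C.D)$, $\bar x_{3}=\pi(A.D)$ for some past $C$ and future $D$. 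By \eqref{temporal displacemetn} and \eqref{limite H} together with Lemmas \ref{stable hol}--\ref{unstable hol}, $H(\mathscr{Q})$ is the sum of the four holonomies $\sum_{k\ge0}[\tau_{\mathcal{S}}(\mathcal{F}^{k}\bar x_{1})-\tau_{\mathcal{S}}(\mathcal{F}^{k}\bar x_{0})]$, $\sum_{k<0}[\tau_{\mathcal{S}}(\mathcal{F}^{k}\bar x_{1})-\tau_{\mathcal{S}}(\mathcal{F}^{k}\bar x_{2})]$, $\sum_{k\ge0}[\tau_{\mathcal{S}}(\mathcal{F}^{k}\bar x_{3})-\tau_{\mathcal{S}}(\mathcal{F}^{k}\bar x_{2})]$, $\sum_{k<0}[\tau_{\mathcal{S}}(\mathcal{F}^{k}\bar x_{3})-\tau_{\mathcal{S}}(\mathcal{F}^{k}\bar x_{0})]$.

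Next, for $X\in\{A,B,C,D\}$ let $X_{N}$ be the length-$N$ truncation of $X$. Using transitivity of $(\Sigma,\sigma)$, I would fix a long admissible word $\rho_{N}$ (of length $\asymp N$) ending with a prescribed symbol $\star$; connecting words $P_{N}$ and $P'_{N}$, each of the form ``(bounded connector)$\,\rho_{N}$'', joining the last symbol of $B_{N}$, resp.\ of $D_{N}$, to $\rho_{N}$; and bounded connecting words $Q^{a}_{N},Q^{c}_{N}$ joining $\star$ to the first symbol of $A_{N}$, resp.\ of $C_{N}$ (the bounded connectors chosen so that the cyclic words below are admissible). Let $\mathcal{O}^{N}_{0},\dots,\mathcal{O}^{N}_{3}$ be the periodic orbits with period words
\begin{gather*}
w_{0}=A_{N}\,B_{N}\,P_{N}\,Q^{a}_{N},\qquad w_{1}=C_{N}\,B_{N}\,P_{N}\,Q^{c}_{N},\\
w_{2}=C_{N}\,D_{N}\,P'_{N}\,Q^{c}_{N},\qquad w_{3}=A_{N}\,D_{N}\,P'_{N}\,Q^{a}_{N}.
\end{gather*}
The design is purely combinatorial: each of the eight blocks occurs, as the same word or the same truncation, in exactly two of the $w_{j}$, and the suffix $\rho_{N}$ is common to all four; hence in the alternating sum $-T_{\Phi}(\mathcal{O}^{N}_{0})+T_{\Phi}(\mathcal{O}^{N}_{1})-T_{\Phi}(\mathcal{O}^{N}_{2})+T_{\Phi}(\mathcal{O}^{N}_{3})$ the ``bulk'' contributions cancel in pairs.

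I would then write each $T_{\Phi}(\mathcal{O}^{N}_{j})=\sum_{k}\tau_{\mathcal{S}}(\mathcal{F}^{k}(\cdot))$ over one period, and for each of the eight blocks pair the two occurrences (in the two orbits containing it, which carry opposite signs). For positions in the $B_{N}$-block (in $\mathcal{O}^{N}_{0},\mathcal{O}^{N}_{1}$), and similarly in the $A_{N}$-, $C_{N}$-, $D_{N}$-blocks, the combined contribution equals, up to an error $O(N\lambda^{cN})$, the corresponding truncated holonomy sum above: writing $\tau_{\mathcal{S}}=\sum_{n}\tau^{(n)}$ and observing that, around any such position and for every $n\lesssim cN$, either the two relevant codings agree with each other while the corresponding itineraries of the two $\bar x_{i}$ also agree, or each coding agrees with its own $\bar x_{i}$, so the $\tau^{(n)}$-part of the error vanishes. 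By Lemmas \ref{stable hol}--\ref{unstable hol}, and since the holonomy tails decay like $\lambda^{N}$, these four contributions converge to the four holonomies adding up to $H(\mathscr{Q})$. For positions in the connector blocks $P_{N},P'_{N},Q^{a}_{N},Q^{c}_{N}$, the combined contribution is $O(N\lambda^{cN})$: around any such position the two orbits carrying opposite sign — after re-pairing the four orbits according to whether the position is near the beginning or the end of the block — have identical codings in a window of radius $\gtrsim cN$, which is exactly where the shared suffix $\rho_{N}$ and the shared continuations $A_{N},C_{N}$ enter. Letting $N\to\infty$ gives
$$
H(\mathscr{Q})=\lim_{N\to\infty}\big[-T_{\Phi}(\mathcal{O}^{N}_{0})+T_{\Phi}(\mathcal{O}^{N}_{1})-T_{\Phi}(\mathcal{O}^{N}_{2})+T_{\Phi}(\mathcal{O}^{N}_{3})\big].
$$
Since the period words $w_{j}$ are prescribed by a recipe depending only on the symbolic codings of $\bar x_{0},\dots,\bar x_{3}$ — hence only on the topological conjugacy class of $\mathcal{F}|_{\overline{\Lambda}}$ (equivalently, the orbit-equivalence class of $\Phi|_{\Lambda}$), together with fixed choices of connecting words in $(\Sigma,\sigma)$ — and since $T_{\Phi}$ on periodic orbits is exactly the periodic length data, the right-hand side is a function of the lengths of periodic orbits alone.

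The main obstacle is this last estimate: arranging the four periodic orbits so that, at every position along their periods, the two of them contributing with opposite sign agree in a symbolic window of radius growing linearly in $N$. This is what forces the careful choice of connectors, in particular the long common suffix $\rho_{N}$ of $P_{N}$ and $P'_{N}$, and the re-pairing of the four orbits inside the connector blocks; once it is in place the bound is a routine combination of $\|\tau^{(n)}\|_{\infty}\le C\lambda^{n}$ with the exponential proximity of points sharing long symbolic blocks. The symbolic set-up, the identification of the block sums with the holonomies of Lemmas \ref{stable hol}--\ref{unstable hol}, and the combinatorial bookkeeping are all routine.
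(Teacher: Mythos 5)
Your argument is correct and follows Otal's scheme, but your construction is genuinely different from the paper's. The paper proceeds in two steps: Lemma~\ref{lemma approx} first approximates $\mathscr{Q}$ by quadrilaterals whose opposite vertices $\bar x_0,\bar x_2$ are periodic (density of periodic points plus continuity of holonomies), and Proposition~\ref{main propo restr} then builds, for each $n$, a \emph{single} long periodic orbit $\bar x^n$ whose symbolic word concatenates the heteroclinic transitions through $\bar x_1,\bar x_3$ with long repeated blocks of the periodic words for $\bar x_0,\bar x_2$, yielding the approximant $T_\Phi(\bar x^n)-(4n+1)T_\Phi(x_0)-(4n+1)T_\Phi(x_2)$ and a double limit. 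You dispense with the preliminary periodic approximation entirely: starting directly from the product-structure codings $A.B$, $C.B$, $C.D$, $A.D$ of the four vertices, you build \emph{four} comparable periodic orbits and show that the alternating sum of their lengths converges to $H(\mathscr{Q})$ in a single limit. The paper's variant keeps the combinatorics lighter, since the periodic endpoints turn the codings of $\bar x_1,\bar x_3$ into words with eventually periodic tails and only two short connectors are needed; your variant is more symmetric and makes transparent how each of the four blocks $A_N,B_N,C_N,D_N$ produces, with the correct sign, exactly one of the four holonomies in \eqref{temporal displacemetn}, at the cost of a more delicate connector design (the common suffix $\rho_N$ of $P_N,P'_N$, and the re-pairing of $(\mathcal{O}^N_0,\mathcal{O}^N_1)$ versus $(\mathcal{O}^N_0,\mathcal{O}^N_3)$ depending on which end of the connector one is near) needed to keep the two opposite-sign codings matching on windows of radius $\gtrsim N/2$ at every position, so that the connector contribution is $O(N\lambda^{cN})$. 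Both routes show that $H(\mathscr{Q})$ depends only on the periodic length data together with the symbolic combinatorics of $\mathscr{Q}$, which is what Corollary~\ref{egalite aires} needs, since the latter is preserved under a flow conjugacy.
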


Proposition \ref{main propo} is a direct outcome of Lemma \ref{lemma approx} and Proposition \ref{main propo restr} below. 

\begin{lemma}\label{lemma approx}
For any $\mathcal{R}$-good quadrilateral $\mathscr{Q}=(x_0,x_1,x_2,x_3)\in \Lambda^4$, there exists a sequence $(\mathscr{Q}^n)_{n \in \N}\in(\Lambda^4)^\N$  of $\mathcal{R}$-good quadrilaterals $\mathscr{Q}^n=(x_0^n,x_1^n,x_2^n,x_3^n)$ with $x_0^n,x_2^n \in \mathrm{Per}(\Phi)$ such that $\lim_{n \to +\infty} \mathscr{Q}^n=\mathscr{Q}$, i.e., $\lim_{n \to\infty} x_j^n=x_j$, for each $j=0,\dots,3$. In particular, it holds 
$$
H(\mathscr{Q})=\lim_{n \to +\infty}H(\mathscr{Q}^n). 
$$
\end{lemma}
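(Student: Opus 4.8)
The strategy is to perturb each of the four vertices of the quadrilateral $\mathscr{Q}=(x_0,x_1,x_2,x_3)$ so that $x_0$ and $x_2$ become periodic while preserving the incidence relations $x_1\in\mathcal{W}^s_{\Phi,\mathrm{loc}}(x_0)$, $x_2\in\mathcal{W}^u_{\Phi,\mathrm{loc}}(x_1)$, and $x_3\in\mathcal{W}^s_{\Phi,\mathrm{loc}}(x_2)\cap\mathcal{W}^{cu}_{\Phi,\mathrm{loc}}(x_0)$. First I would pass to the projected quadrilateral $\overline{\mathscr{Q}}=(\bar x_0,\bar x_1,\bar x_2,\bar x_3)\subset\overline\Lambda=\Lambda\cap\mathcal{S}$ and work inside the rectangle $R_i$, where the Poincar\'e map $\mathcal{F}$ acts as an axiom A / horseshoe map with local product structure. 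Here $\bar x_1\in\mathcal{W}^s_{R_i}(\bar x_0)$, $\bar x_2\in\mathcal{W}^u_{R_i}(\bar x_1)$, $\bar x_3\in\mathcal{W}^s_{R_i}(\bar x_2)\cap\mathcal{W}^u_{R_i}(\bar x_0)$; the configuration is governed purely by the symbolic dynamics of $\mathcal{F}|_{\overline\Lambda}$.

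The key step is a shadowing/closing argument in the subshift of finite type. Let $\bar x_0$ have symbolic coding $(\dots a_{-1}a_0a_1\dots)$ and $\bar x_2$ have coding $(\dots b_{-1}b_0b_1\dots)$. For large $n$, I replace $\bar x_0$ by the periodic point $\bar x_0^n$ whose code is the periodic repetition of the admissible block $a_{-n}\dots a_{-1}a_0a_1\dots a_n$ closed up using the Markov property (adjusting the finitely many joining symbols so the word remains admissible), and similarly $\bar x_2$ by a periodic point $\bar x_2^n$ built from $b_{-n}\dots b_n$. By uniform hyperbolicity these periodic points converge: $\bar x_0^n\to\bar x_0$, $\bar x_2^n\to\bar x_2$, exponentially in $n$. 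Then I define $\bar x_1^n:=[\bar x_2^n,\bar x_0^n]_{R_i}=\mathcal{W}^s_{R_i}(\bar x_0^n)\cap\mathcal{W}^u_{R_i}(\bar x_2^n)$ — wait, that is the wrong bracket; rather, set $\bar x_3^n:=[\bar x_2^n,\bar x_0^n]_{R_i}\in\mathcal{W}^s_{R_i}(\bar x_2^n)\cap\mathcal{W}^u_{R_i}(\bar x_0^n)$ and $\bar x_1^n:=[\bar x_0^n,\bar x_2^n]_{R_i}\in\mathcal{W}^s_{R_i}(\bar x_0^n)\cap\mathcal{W}^u_{R_i}(\bar x_2^n)$, using the local product structure / $[\cdot,\cdot]_{R_i}$ map available on the rectangle, so that $\overline{\mathscr{Q}}^n=(\bar x_0^n,\bar x_1^n,\bar x_2^n,\bar x_3^n)$ is a genuine projected quadrilateral with the required incidences. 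Continuity of $[\cdot,\cdot]_{R_i}$ (local product structure of $\Lambda$) then gives $\bar x_1^n\to\bar x_1$, $\bar x_3^n\to\bar x_3$. Finally I lift back to $\Lambda$: choose $x_j^n$ on the flow line through $\bar x_j^n$ close to the original $x_j$ (this only requires continuity of the flow and the fact that the rectangles are flow boxes), so that $\mathscr{Q}^n$ is $\mathcal{R}$-good for $n$ large, with $x_0^n,x_2^n\in\mathrm{Per}(\Phi)$, and $\mathscr{Q}^n\to\mathscr{Q}$.

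For the last assertion, $H(\mathscr{Q})=\lim_n H(\mathscr{Q}^n)$, I would invoke Lemma \ref{continuity holonomies}: the temporal displacement $H(\mathscr{Q})$ is, by \eqref{temporal displacemetn}, a finite sum of stable and unstable holonomies $H^s_\mathcal{S},H^u_\mathcal{S}$ evaluated at the vertices of $\overline{\mathscr{Q}}$, and these depend continuously on the points; since $\bar x_j^n\to\bar x_j$ for each $j$, continuity gives $H(\mathscr{Q}^n)\to H(\mathscr{Q})$. The main obstacle is the bookkeeping in the closing step: one must check that the admissible blocks can be closed into admissible periodic words using the Markov property of $\mathcal{R}$ without disturbing the prescribed stable/unstable relations among the four vertices, and that after lifting from $\mathcal{S}$ back to $M$ the perturbed quadrilaterals remain $\mathcal{R}$-good; both are routine given the local product structure and Theorem \ref{Chernov Markov}, but they are where the real content lies.
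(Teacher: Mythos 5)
Your proposal is correct and follows essentially the same route as the paper: project $\mathscr{Q}$ to the cross-section $R_i$, replace $\bar x_0,\bar x_2$ by nearby periodic points, rebuild $\bar x_1^n=[\bar x_0^n,\bar x_2^n]_{R_i}$ and $\bar x_3^n=[\bar x_2^n,\bar x_0^n]_{R_i}$ via the local product structure, lift along the flow, and conclude by the continuity of the holonomies (Lemma~\ref{continuity holonomies}). The one place you over-engineer is the symbolic closing/shadowing argument used to produce $\bar x_0^n,\bar x_2^n$: since $\Lambda$ is a basic set, periodic points are \emph{dense} by definition, so the paper simply picks sequences of periodic points in $R_i\cap\Lambda$ converging to $\bar x_0$ and $\bar x_2$ with no bookkeeping required.
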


\begin{proof}
	Fix a $\mathcal{R}$-good quadrilateral $\mathscr{Q}=(x_0,x_1,x_2,x_3)\in \Lambda^4$, with $x_0 \in R_i$, $i \in \{1,\dots,m\}$, and let $\overline{\mathscr{Q}}:=(\bar x_0,\bar x_1,\bar x_2,\bar x_3)$ be the projection of $\mathscr{Q}$ on $R_i$ as before. 
	
	As periodic points are dense in $\Lambda$, for $j=0,2$, there exists a sequence $(\bar x_j^n)_{n \in \N}\in \big(\mathrm{Per}(\Phi)\cap R_i\big)^\N$ of periodic points such that $\lim_{n\to +\infty} \bar x_j^n=\bar x_j$. Let $\bar x_1^n:=[\bar x_0^n,\bar x_2^n]_{R_i}$ and $\bar x_3^n:=[\bar x_2^n,\bar x_0^n]_{R_i}$, so that the lift $\mathscr{Q}^n:=(x_0^n,x_1^n,x_2^n,x_3^n)$ of $\overline{\mathscr{Q}}^n:=(\bar x_0^n,\bar x_1^n,\bar x_2^n,\bar x_3^n)$  is a $\mathcal{R}$-good quadrilateral, where
	\begin{align*}
	x_0^n&:=\bar x_0^n,\quad & x_1^n&:=\Phi^{H_\mathcal{S}^s(\bar x_0^n,\bar x_1^n)}(\bar x_1^n),\\ x_2^n&:=\Phi^{H_\mathcal{S}^s(\bar x_0^n,\bar x_1^n)+H_\mathcal{S}^u(\bar x_1^n,\bar x_2^n)}(\bar x_2^n), & x_3^n&:=\Phi^{H_\mathcal{S}^s(\bar x_0^n,\bar x_1^n)+H_\mathcal{S}^u(\bar x_1^n,\bar x_2^n)+H_\mathcal{S}^s(\bar x_2^n,\bar x_3^n)}(\bar x_3^n),
	\end{align*}
	and $x_0^n,x_2^n\in \mathrm{Per}(\Phi)$. 
	Clearly, we have $\lim_{n\to +\infty}\mathscr{Q}^n=\mathscr{Q}$. 
	By the definition \eqref{temporal displacemetn} of temporal displacements in terms of holonomies, and by Lemma \ref{continuity holonomies}, the function $\widetilde{\mathscr{Q}}\mapsto H(\widetilde{\mathscr{Q}})$ is continuous. Thus, we conclude that $H(\mathscr{Q})=\lim_{n \to +\infty}H(\mathscr{Q}^n)$. 
\end{proof}
	
\begin{prop}\label{main propo restr}
For any $\mathcal{R}$-good quadrilateral $\mathscr{Q}=(x_0,x_1,x_2,x_3)\in \Lambda^4$ such that $x_0,x_2\in \mathrm{Per}(\Phi)$, the quantity  $H(\mathscr{Q})$ is determined by the lengths of periodic orbits. More precisely,  there exists a sequence $(\bar x^n)_{n \in \N}\in \mathrm{Per}(\Phi)^\N$ of 
periodic points 
such that for any $\varepsilon>0$, there exists an integer $N_0(\varepsilon)\in \N$  such that 
$$
\Big|H(\mathscr{Q})-  \big[T_\Phi(\bar x^n)-(4n+1) T_\Phi(x_0)-(4n+1) T_\Phi(x_2)\big]\Big|< \varepsilon,\quad \forall\, n \geq N_0(\varepsilon). 
$$
\end{prop}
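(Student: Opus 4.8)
The plan is to realize the temporal displacement $H(\mathscr{Q})$ of a quadrilateral with periodic corners $x_0, x_2$ as a limit of \emph{differences of periods of periodic orbits}, which is exactly the data preserved by an iso-length-spectral orbit equivalence. Recall from \eqref{limite H} that
$$
H(\mathscr{Q})=\lim_{n\to+\infty}\big[-\tau_{\mathcal S}^n(\bar x_0)+\tau_{\mathcal S}^n(\bar x_1)-\tau_{\mathcal S}^n(\bar x_2)+\tau_{\mathcal S}^n(\bar x_3)\big],
$$
where $\tau_{\mathcal S}^n(z)=\sum_{j=-n}^n\tau_{\mathcal S}(\mathcal F^j(z))$ is a Birkhoff-type sum along the Poincaré map $\mathcal F$. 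The key point is that $\bar x_0,\bar x_2$ are periodic for $\mathcal F$ (of some $\mathcal F$-periods $p_0,p_2$, with flow-periods $T_\Phi(x_0),T_\Phi(x_2)$), while $\bar x_1=[\bar x_0,\bar x_2]_{R_i}$ lies on $\mathcal W^s_{R_i}(\bar x_0)\cap\mathcal W^u_{R_i}(\bar x_2)$ and $\bar x_3=[\bar x_2,\bar x_0]_{R_i}$ lies on $\mathcal W^s_{R_i}(\bar x_2)\cap\mathcal W^u_{R_i}(\bar x_0)$; so the forward orbit of $\bar x_1$ shadows that of $\bar x_0$ and its backward orbit shadows that of $\bar x_2$, and symmetrically for $\bar x_3$ with the roles of $0$ and $2$ swapped.

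\textbf{Construction of the approximating periodic points.} The idea is the classical Anosov/Bowen closing-lemma "pasting" trick adapted to the cross-section: for each large $n$, follow the orbit of $\bar x_0$ for $2n+1$ iterates of $\mathcal F$ (so roughly $(2n{+}1)$ copies of the $p_0$-periodic loop), then switch onto the heteroclinic point $\bar x_1$-type connection, follow $\bar x_2$ for $2n+1$ iterates, then switch via the $\bar x_3$-type connection back toward $\bar x_0$; because $\bar x_1,\bar x_3$ realize the local product structure, for $n$ large this concatenation is $\delta$-pseudo-periodic and hence, by the Anosov closing lemma (equivalently, shadowing for the subshift coding $\mathcal F|_{\overline\Lambda}$), shadowed by a genuine periodic point $\bar x^n$ of $\mathcal F$ with $\mathcal F$-period $\approx (4n+2)$ "blocks". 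Its flow period is
$$
T_\Phi(\bar x^n)=\sum_{\text{one full loop of }\bar x^n}\tau_{\mathcal S}(\mathcal F^j(\bar x^n)),
$$
and I will split this sum into four runs: the run shadowing $\bar x_0$ for $2n+1$ steps contributes $\approx (2n+1)T_\Phi(x_0)$ up to an exponentially small error plus the "defect" $\sum_j[\tau_{\mathcal S}(\mathcal F^j\bar x^n)-\tau_{\mathcal S}(\mathcal F^j\bar x_0)]$ which, by Lipschitz continuity of $\tau_{\mathcal S}$ and exponential shadowing, telescopes (as in Lemmas \ref{stable hol}--\ref{unstable hol}) to a half of the $\pm H^s,\pm H^u$ holonomy terms; similarly for the three other runs. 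Precisely, bookkeeping the four transition regions gives
$$
T_\Phi(\bar x^n)=(2n{+}1)T_\Phi(x_0)+(2n{+}1)T_\Phi(x_0)+(2n{+}1)T_\Phi(x_2)+(2n{+}1)T_\Phi(x_2)+H(\mathscr Q)+o(1),
$$
i.e.\ $T_\Phi(\bar x^n)=(4n{+}2)T_\Phi(x_0)+(4n{+}2)T_\Phi(x_2)+H(\mathscr Q)+o(1)$ — possibly after reindexing $n$ and absorbing a bounded number of extra blocks this is the stated $(4n+1)$; the exact affine count of blocks is a matter of convention and I will fix it so that the displayed inequality
$$
\Big|H(\mathscr{Q})-\big[T_\Phi(\bar x^n)-(4n+1)T_\Phi(x_0)-(4n+1)T_\Phi(x_2)\big]\Big|<\varepsilon,\qquad \forall\,n\ge N_0(\varepsilon),
$$
holds, which is exactly the claim.

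\textbf{Main obstacle.} The delicate point is not the existence of the shadowing periodic orbit (that is the Anosov closing lemma / finite-type shadowing, available since $\mathcal F|_{\overline\Lambda}$ is conjugate to a subshift of finite type), but rather the \emph{quantitative} control showing that the accumulated time-defects over the four long runs converge, as $n\to\infty$, to exactly the combination $H^s_{\mathcal S}(\bar x_0,\bar x_1)+H^u_{\mathcal S}(\bar x_1,\bar x_2)+H^s_{\mathcal S}(\bar x_2,\bar x_3)+H^u_{\mathcal S}(\bar x_3,\bar x_0)=H(\mathscr Q)$, with the "central" contributions $(2n{+}1)T_\Phi(x_0)$ etc.\ splitting off cleanly. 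This requires: (i) uniform exponential contraction/expansion along stable/unstable leaves so that the defect series are summable uniformly in $n$; (ii) uniform Lipschitz (or Hölder) regularity of $\tau_{\mathcal S}$ on each rectangle; and (iii) matching the telescoping sums with the definitions of $H^s_{\mathcal S},H^u_{\mathcal S}$ via Lemmas \ref{stable hol} and \ref{unstable hol}, taking care that the orbit of $\bar x^n$ enters and exits each transition region in a way that picks up precisely one stable holonomy and one unstable holonomy per corner. Once these estimates are in place, letting $n\to\infty$ in the decomposition of $T_\Phi(\bar x^n)$ and comparing with \eqref{limite H} yields the proposition. Combined with Lemma \ref{lemma approx}, this proves Proposition \ref{main propo}: $H(\mathscr Q)$ is determined by the length spectrum.
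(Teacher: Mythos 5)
Your strategy is the same as the paper's: after reducing (up to iteration) to $\bar x_0,\bar x_2$ being $\mathcal F$-fixed, approximate $H(\mathscr Q)$ by closing up a pseudo-orbit that runs near $\bar x_0$ for a long stretch, switches through the $\bar x_1$-heteroclinic, runs near $\bar x_2$, switches through $\bar x_3$, and returns; then compare $T_\Phi(\bar x^n)$ with $(4n+1)\big(T_\Phi(x_0)+T_\Phi(x_2)\big)$. The paper implements the shadowing step symbolically: it writes down the explicit admissible word
$\sigma^n:=\sigma_+^1\,\sigma_0^{2n}\,\sigma_-^3\sigma_0^3\sigma_+^3\,\sigma_2^{2n}\,\sigma_-^1$
with $|\sigma_\pm^1|=|\sigma_\pm^3|=n$ and defines $\bar x^n$ as the periodic point of period $8n+2$ with coding $(\sigma_0^1\sigma^n)^\infty$. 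This normalization is precisely what makes the count come out as $(4n+1)$ rather than your $(4n+2)$; it is the bookkeeping convention you anticipated.

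The substantive gap in your proposal is that the estimates you flag under (i)--(iii) are exactly what has to be proved, and you do not prove them. The paper does this in Lemma~\ref{claim chiante}: inequalities \eqref{eq 1 ineq}--\eqref{eq 2 ineq} compare the return times along $\bar x^n$ and $\bar y^n:=\mathcal F^{4n+1}(\bar x^n)$ with those along $\bar x_1$ and $\bar x_3$ over the central window $|k|\le 2n$, while \eqref{eq 3 ineq}--\eqref{eq 4 ineq} control the tails of the series defining $H^{s/u}_\mathcal S$ for $|k|\ge 2n+1$. These require more than exponential shadowing plus Lipschitz continuity of $\tau_\mathcal S$: the proof invokes $\mathcal C^{1,1/2}$ Sternberg-type linearizing coordinates near $\bar x_0,\bar x_2$ (as in \cite{HKS,BDKL}) to get the quantitative $O(n\lambda^n)$ bound uniformly in $n$, and the exact identity \eqref{equation point per} splitting $T_\Phi(\bar x^n)$ into the two windows. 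So your outline is a faithful sketch of the paper's argument, but the proof of the closing estimate — the actual content of Proposition~\ref{main propo restr} — is still missing from it.
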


\begin{proof}
	Fix a $\mathcal{R}$-good quadrilateral $\mathscr{Q}=(x_0,x_1,x_2,x_3)\in \Lambda^4$, with $x_0 \in R_i$, $i \in \{1,\dots,m\}$ and $x_0,x_2\in \mathrm{Per}(\Phi)$, and let $\overline{\mathscr{Q}}:=(\bar x_0,\bar x_1,\bar x_2,\bar x_3)$ be the projection of $\mathscr{Q}$ on $R_i$. 
	Let us choose $p\geq 1$ sufficiently large such that $\mathcal{F}^p(\bar x_0)=\bar x_0$ and $\mathcal{F}^p(\bar x_2)=\bar x_2$. After replacing $\tau_\mathcal{S}$ with 
	\begin{equation*}
	\tau_{+,\mathcal{S}}^p(\cdot):=\sum_{j=0}^{p-1} \tau_{\mathcal{S}}\circ\mathcal{F}^j(\cdot),
	\end{equation*} 
	we may thus assume that $x_0,x_2$ are fixed points under the Poincar\'e map $\mathcal{F}$. 
	
	Note that $\bar x_1=[\bar x_0,\bar x_2]_{R_i}$ and $\bar x_3=[\bar x_2,\bar x_0]_{R_i}$ are heteroclinic intersections between the invariant manifolds of the fixed points $\bar x_0,\bar x_2$. 
	
	\begin{figure}[h]
		\includegraphics[scale=0.8, trim=0 1.5cm 0 1.6cm, clip]{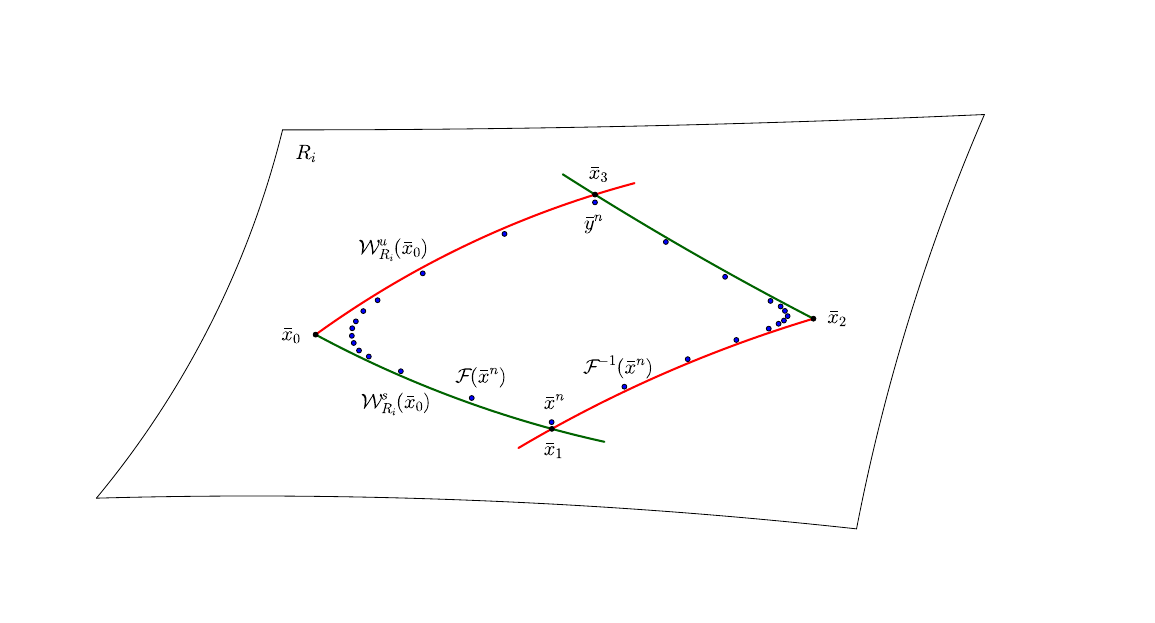}
		\caption{Approximating periodic orbits with a prescribed combinatorics.}
	\end{figure}

As $\Lambda$ is a basic set, the dynamics can be coded symbolically, using some finite alphabet $\mathscr{A}$. In the following, for each finite word $\sigma$ in $\mathscr{A}$, we denote by $|\sigma|\in \N$ the length of $\sigma$. 

The fixed (periodic) points $\bar x_0$, $\bar x_2$, correspond to a symbol (a finite sequence of symbols) $\sigma_0$, $\sigma_2$ respectively. The point $\bar x_1$ is a heteroclinic intersection between $\mathcal{W}_{R_i}^s(\bar x_0)$ and $\mathcal{W}_{R_i}^u(\bar x_2)$, hence there exist a symbol $\sigma_0^1 \in \mathscr{A}$ and two finite words $\sigma_-^1, \sigma_+^1$ in $\mathscr{A}$ such that the symbolic coding of $\bar x_1$ is 
$$
\bar x_1 \longleftrightarrow \dots\sigma_2\sigma_2\sigma_-^1\underset{\substack{\uparrow}}{\sigma_0^1}\sigma_+^1\sigma_0\sigma_0\dots
$$
Similarly, there exist a symbol $\sigma_0^3 \in \mathscr{A}$ and two finite words $\sigma_-^3, \sigma_+^3$ in $\mathscr{A}$ such that the symbolic coding of $\bar x_3$ is 
$$
\bar x_3 \longleftrightarrow\dots\sigma_0\sigma_0\sigma_-^3\underset{\substack{\uparrow}}{\sigma_0^3}\sigma_+^3\sigma_2\sigma_2\dots
$$
Up to redefining $\sigma_+^1$ as $\sigma_+^1\underbrace{\sigma_0\dots\sigma_0}_{n-\vert\sigma_+^1\vert}$, without loss of generality, we can assume that $\vert\sigma_+^1\vert=n$. 
Similarly, we can assume that $\vert\sigma_-^1\vert=\vert\sigma_+^3\vert=\vert\sigma_-^3\vert=n$.

For each integer $n \geq 0$, we define a periodic point $\bar x^n$ whose symbolic coding is given by the infinite word $\dots(\sigma_0^1\sigma^n)(
\underset{\substack{\uparrow}}{\sigma_0^1}\sigma^n)
(\sigma_0^1\sigma^n)\dots$, where  
$$
\sigma^n:=\sigma_+^1\underbrace{\sigma_0\dots\sigma_0}_{2n}\sigma_-^3\sigma_0^3\sigma_+^3\underbrace{\sigma_2\dots\sigma_2}_{2n}\sigma_-^1.
$$
Thus, the point of $\bar x^n$ is a periodic point, of period
$$
2+4n+|\sigma_-^1|+|\sigma_+^1|+|\sigma_-^3|+|\sigma_+^3|=2+8n. 
$$
\begin{lemma}\label{claim chiante}
	For any $\varepsilon>0$, there exists $N=N(\varepsilon)>0$ such that for each integer $n \geq N$, the following inequalities hold:
	\begin{align}
	\left| \sum_{k=-2n}^{2n}\Big[\tau_\mathcal{S}(\mathcal{F}^k(\bar x^n))-\tau_\mathcal{S}(\mathcal{F}^k(\bar x_1))\Big]\right|&<\varepsilon,\label{eq 1 ineq}\\
	\left| \sum_{k=-2n}^{2n}\Big[\tau_\mathcal{S}(\mathcal{F}^{k}(\bar y^n))-\tau_\mathcal{S}(\mathcal{F}^k(\bar x_3))\Big]\right|&<\varepsilon,\label{eq 2 ineq}
	\end{align}
	where we have set $\bar y^n:=\mathcal{F}^{4n+1}(\bar x^n)$, and for each $M \geq 2n+1$, we have 
	\begin{align}
	\left| \sum_{k=2n+1}^{M}\Big[\tau_\mathcal{S}(\mathcal{F}^k(\bar x_1))-T_\Phi(\bar x_0)\Big]\right|+\left| \sum_{k=-M}^{-2n-1}\Big[\tau_\mathcal{S}(\mathcal{F}^k(\bar x_1))-T_\Phi(\bar x_2)\Big]\right|&<\varepsilon,\label{eq 3 ineq}\\
	\left| \sum_{k=2n+1}^{M}\Big[\tau_\mathcal{S}(\mathcal{F}^k(\bar x_3))-T_\Phi(\bar x_2)\Big]\right|+\left| \sum_{k=-M}^{-2n-1}\Big[\tau_\mathcal{S}(\mathcal{F}^k(\bar x_3))-T_\Phi(\bar x_0)\Big]\right|&<\varepsilon. \label{eq 4 ineq}
	\end{align}
	Observe that the two sums in \eqref{eq 1 ineq} and in \eqref{eq 2 ineq} add up to
	\begin{equation}\label{equation point per}
	\sum_{k=-2n}^{2n}\tau_\mathcal{S}(\mathcal{F}^k(\bar x^n))+
	\sum_{k=-2n}^{2n}\tau_\mathcal{S}(\mathcal{F}^{k}(\bar y^n))=T_\Phi(\bar x^n). 
	\end{equation}
\end{lemma}

\begin{proof}[Proof of Lemma \ref{claim chiante}]
	By looking at the symbolic codings of $\bar x^n$ and $\bar x_1$, we see that they have the same symbolic past (resp. future) for at least $3n$ steps of iterations under  $\mathcal{F}$. By hyperbolicity of $\mathcal{F}$, for some constant $\lambda\in (0,1)$, we thus have 
	\begin{equation}\label{proximite points}
	d\big(\mathcal{F}^k(\bar x^n),\mathcal{F}^k(\bar x_1)\big)=O(\lambda^n),\quad \forall\, k\in \{-2n,\dots,2n\}. 
	\end{equation}
	Indeed, without loss of generality (after possibly iterating $n_0$ times, for some integer $n_0\geq 1$ independent of $n$), we may assume that each of these points belongs to some small neighborhood of $\bar x_0$  where the dynamics is conjugated to the differential $D\mathcal{F}(\bar x_0)$. More precisely,  by Lemma 23  in \cite{HKS}, for any $\delta>0$, and for $j=0,2$, 
	there exist a neighborhood $\mathcal{U}_j$ of $\bar x_j$, a neighborhood $\mathcal{V}_j \subset \R^2$ of $(0,0)$, and a $\mathcal{C}^{1,\frac 12}$-diffeomorphism  $\chi_j\colon\mathcal{U}_j \to  \mathcal{V}_j$, such that
	\begin{equation*}
	\chi_j\circ \mathcal{F} \circ \chi_j^{-1} = D\mathcal{F}(\bar x_j),\qquad \|\chi_j-\mathrm{id}\|_{\mathcal{C}^1} \leq \delta, \qquad \|\chi_j^{-1}-\mathrm{id}\|_{\mathcal{C}^1} \leq \delta. 
	\end{equation*}
	
	By \eqref{proximite points}, summing over all the indices $k\in \{-2n,\dots,2n\}$, and as $\tau_\mathcal{S}$ is Lipschitz continuous, the left hand side in \eqref{eq 1 ineq} is of order at most $O(n\lambda^n)$; therefore, for $n$ sufficiently large, this term is smaller than $\varepsilon$. Inequality \eqref{eq 2 ineq} is proved similarly.
	
	Finally, \eqref{eq 3 ineq} is proved using the same linearizing coordinates near $\bar x_0$ and $\bar x_2$, noting that $\bar x_1 \in \mathcal{W}_{R_i}^s(\bar x_0)$, resp. $\bar x_1 \in \mathcal{W}_{R_i}^u(\bar x_2)$, so that $\bar x_1$ has the same future as $\bar x_0$, resp. the same past as $\bar x_2$, hence all of its future, resp. past iterates (after iterating finitely many times) belong to the neighborhood $\mathcal{U}_0$ of $\bar x_0$, resp. to the neighborhood $\mathcal{U}_2$ of $\bar x_2$, endowed with linearizing coordinates. We argue similarly for \eqref{eq 4 ineq}, which concludes the proof of Lemma \ref{claim chiante}. 
\end{proof}

Let us now conclude the proof of Proposition \ref{main propo restr}. Fix some small $\varepsilon>0$. By \eqref{limite H}, for $m \geq 1$ sufficiently large, we have
\begin{equation*}
\Big|H(\mathscr{Q})-\sum_{j=-m}^{m} \Big[-\tau_{\mathcal{S}}(\mathcal{F}^j(\bar x_0))+ \tau_{\mathcal{S}}(\mathcal{F}^j(\bar x_1))-\tau_{\mathcal{S}}(\mathcal{F}^j(\bar x_2))+\tau_{\mathcal{S}}(\mathcal{F}^j(\bar x_3))\Big]\Big| < \frac \varepsilon 2. 
\end{equation*}

By Lemma \ref{claim chiante}, there exists a periodic point $\bar x^n\in \mathrm{Per}(\Phi)$ such that inequalities \eqref{eq 1 ineq},\eqref{eq 2 ineq},\eqref{eq 3 ineq},\eqref{eq 4 ineq} hold for $\bar x^n$ and $\frac \varepsilon 8$ in place of $\varepsilon$. Splitting the different sums of return times to match these inequalities, and thanks to \eqref{equation point per}, we conclude that 
$$
\Big| H(\mathscr{Q})-\big[T_\Phi(\bar x^n)-(4n+1) T_\Phi(x_0)-(4n+1) T_\Phi(x_2)\big]\Big|< \varepsilon,
$$
as desired. 
\end{proof}

\subsection{Temporal displacements and areas of quadrilaterals}

Assume that there exists a smooth contact form $\alpha$ on $M$ that is adapted to the basic set $\Lambda$ in the sense of Definition \ref{adapted contact form}. 
Recall the following fact:
\begin{lemma}\label{lemma noyau ker}
	We have $E_\Phi^s(x)\subset \ker \alpha(x)$, for all $x \in \mathcal{W}_\Phi^s(\Lambda)$,  and $E_\Phi^u(x)\subset \ker \alpha(x)$, for all $x \in \mathcal{W}_\Phi^u(\Lambda)$. In particular, it holds 
	\begin{equation}\label{noyau alpha}
	E_\Phi^s(x)\oplus E_\Phi^u(x)= \ker \alpha(x),\quad \forall\, x \in \Lambda. 
	\end{equation}
\end{lemma}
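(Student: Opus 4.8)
The plan is to exploit the two Reeb conditions defining an adapted contact form together with the dynamical characterization of the stable and unstable bundles. First I would prove the inclusion $E_\Phi^s(x)\subset\ker\alpha(x)$ for $x\in\mathcal{W}_\Phi^s(\Lambda)$. Fix such an $x$ and a vector $v\in E_\Phi^s(x)$. Since $E_\Phi^s$ is contained in the tangent space to the weak stable lamination, $v\in T_x\mathcal{W}_\Phi^{cs}(\Lambda)$, and condition (\ref{condition deux}) tells us $X\in\ker d\alpha|_{\mathcal{W}_\Phi^{cs}(\Lambda)}$, so $\imath_X d\alpha$ vanishes on the weak stable leaves. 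The key computation is to track $\alpha(D\Phi^t(x)\cdot v)$ as $t\to+\infty$: by invariance of the contact form's interaction with the flow (Cartan's formula $L_X\alpha=\imath_X d\alpha+d(\imath_X\alpha)$, which vanishes on $\Lambda$ and on $\mathcal{W}^{cs}_\Phi(\Lambda)$ since $\imath_X\alpha\equiv 1$ is constant there by (\ref{condition un}) and $\imath_X d\alpha=0$ there), one gets that $\alpha$ is $\Phi^t$-invariant along weak stable leaves, i.e. $\alpha(D\Phi^t(x)\cdot v)=\alpha(v)$ for all $t\ge 0$. On the other hand $\|D\Phi^t(x)\cdot v\|\le C\lambda^t\|v\|\to 0$ by uniform contraction of $E_\Phi^s$, and $\alpha$ is a continuous (indeed smooth) $1$-form, hence bounded on a compact neighbourhood of $\Lambda$; therefore $\alpha(D\Phi^t(x)\cdot v)\to 0$. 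Combining, $\alpha(v)=0$.

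Next I would prove $E_\Phi^u(x)\subset\ker\alpha(x)$ for $x\in\mathcal{W}_\Phi^u(\Lambda)$ by the symmetric argument, using instead condition (\ref{condition deux}) on the weak \emph{unstable} leaves $\mathcal{W}_\Phi^{cu}(\Lambda)$ — so that $\alpha$ is again invariant under $D\Phi^{t}$ restricted to weak unstable leaves — together with the uniform contraction $\|D\Phi^{-t}(x)\cdot v\|\le C\lambda^t\|v\|\to 0$ for $v\in E_\Phi^u(x)$, $t\ge 0$, pushing the estimate to $t\to-\infty$. This yields $\alpha(v)=0$ for every $v\in E_\Phi^u(x)$.

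Finally, for the displayed equality \eqref{noyau alpha}: on $\Lambda$ we have $T_xM=E_\Phi^s(x)\oplus\R X(x)\oplus E_\Phi^u(x)$, and $\ker\alpha(x)$ is a $2$-plane (since $\alpha$ is a nonvanishing $1$-form, being a contact form). By the two inclusions just proved, $E_\Phi^s(x)\oplus E_\Phi^u(x)\subset\ker\alpha(x)$; this subspace is $2$-dimensional because $E_\Phi^s(x)\cap E_\Phi^u(x)=\{0\}$ (both sit in the $D\Phi$-invariant splitting). A $2$-dimensional subspace contained in a $2$-plane equals it, giving $E_\Phi^s(x)\oplus E_\Phi^u(x)=\ker\alpha(x)$. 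Equivalently, $\imath_X\alpha\equiv 1\neq 0$ shows $X(x)\notin\ker\alpha(x)$, so $\ker\alpha(x)$ is transverse to $\R X(x)$ and the stated direct sum decomposition of $T_xM$ forces the identification.

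I expect the main obstacle to be the careful justification that $\alpha(D\Phi^t(x)\cdot v)$ is constant in $t$ along the weak stable (resp. unstable) leaves: one must check that Cartan's formula applies on the (only continuously varying, but leafwise $\mathcal{C}^{1,\beta}$) lamination, i.e. that $L_X\alpha$ restricted to $\mathcal{W}_\Phi^{cs}(\Lambda)$ genuinely vanishes, which uses both $\imath_X\alpha\equiv 1$ being locally constant on the leaves and $\imath_X d\alpha|_{\mathcal{W}_\Phi^{cs}(\Lambda)}=0$ from (\ref{condition deux}); the rest is a soft limiting argument using hyperbolic contraction and continuity of $\alpha$.
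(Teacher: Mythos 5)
Your proof is correct and follows essentially the same strategy as the paper: establish $(\Phi^t)^*\alpha=\alpha$ along weak stable (resp.\ unstable) leaves from the Reeb conditions, then let $t\to+\infty$ (resp.\ $t\to-\infty$), using uniform contraction of $E_\Phi^s$ (resp.\ $E_\Phi^u$) together with boundedness of $\alpha$ to conclude $\alpha|_{E_\Phi^s}=\alpha|_{E_\Phi^u}=0$, and finish by dimension count. The paper packages the invariance step as the identity $\int_\Gamma\alpha=\int_{\Phi^T\Gamma}\alpha$ for a stable arc $\Gamma$ rather than via Cartan's formula, but this is just the integrated form of the same pointwise statement, and the caveat you raise about $\imath_X\alpha$ being constant along the leaves applies equally to the paper's version.
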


\begin{proof}
	Let $\Gamma=\{\gamma(t)\in t \in [0,1]\}\subset \mathcal{W}_{\Phi,\mathrm{loc}}^s(x)$ be an arc in the local stable manifold of some point $x \in \Lambda$.  For each $T >0$, we have
	$$
	\int_\Gamma \alpha=\int_{0}^1 \alpha(\gamma(t))( \gamma'(t))dt=\int_0^1   \alpha(\Phi^T \circ \gamma(t))(D\Phi^T(\gamma(t))\cdot \gamma'(t))dt =\int_{\Phi^T\circ \Gamma}  \alpha. 
	$$
	As $\alpha$ is uniformly bounded, and $\lim_{T \to +\infty}D\Phi^T(\gamma(t))\cdot \gamma'(t) \to 0$, for each $t \in [0,1]$, we deduce that 
	$\int_\Gamma \alpha=0$. 
	Therefore, we have $E_\Phi^s(y) \subset \ker \alpha(y)$, for any $y \in \mathcal{W}_\Phi^s(x)$, $x \in \Lambda$. We argue similarly for the unstable direction. 
	
	Let $x \in \Lambda$. The identity \eqref{noyau alpha} follows from the inclusions $E_\Phi^s(x)\subset \ker \alpha(x)$, $E_\Phi^u(x)\subset \ker \alpha(x)$, and the equality of the dimensions of the two subspaces. 
\end{proof}

Let $\mathscr{Q}=(x_0,x_1,x_2,x_3)\in \Lambda^4$ be a $\mathcal{R}$-good quadrilateral, with $x_0 \in R_i$, for some $i \in \{1,\dots,m\}$, and let $\overline{\mathscr{Q}}:=(\bar x_0,\bar x_1,\bar x_2,\bar x_3)$ be the projection of $\mathscr{Q}$ on $R_i$. We define $\widehat{\mathscr{Q}}$ as the set of all points $x \in R_i$ in the closed region bounded by the arcs $\bar \Gamma_0$, $\bar \Gamma_1$, $\bar \Gamma_2$, $\bar \Gamma_3$, where for $j=0,2$, $\bar \Gamma_j\subset \mathcal{W}_{R_i}^s(\bar x_j)$ is the stable arc connecting $\bar x_j$ to $\bar x_{j+1}$, while  $\bar \Gamma_{j+1}\subset \mathcal{W}_{R_i}^u(\bar x_{j+1})$ is the unstable arc connecting $\bar x_{j+1}$ to $\bar x_{j+2}$, with $\bar x_4:=\bar x_0$. The set $\widehat{\mathscr{Q}} \subset R_i$ is transverse to the flow direction, i.e., 
\begin{equation}\label{cond un peu chiante}
	X(x) \notin T_x \widehat{\mathscr{Q}},\text{ for each } x \in \widehat{\mathscr{Q}},
\end{equation}
which ensures that $d\alpha|_{\widehat{\mathscr{Q}}}$ is non-degenerate.
Let us define 
$$
\mathrm{Area}(\mathscr{Q}):=\int_{\widehat{\mathscr{Q}}} d\alpha. 
$$

\begin{prop}\label{lien h aire}
	Let $\mathscr{Q}=(x_0,x_1,x_2,x_3)\in \Lambda^4$ be a small quadrilateral, so that $\mathscr{Q}$ is  $\mathcal{R}$-good and \eqref{cond un peu chiante} is satisfied. Then
	$$
	\mathrm{Area}(\mathscr{Q})=-H(\mathscr{Q}). 
	$$
\end{prop}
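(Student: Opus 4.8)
The plan is to relate the temporal displacement $H(\mathscr{Q})$ to the $d\alpha$-area of the region $\widehat{\mathscr{Q}}$ by using a Stokes-type argument on the ``solid'' piece of $\mathcal{W}^{cs}_\Phi(\Lambda)\cup\mathcal{W}^{cu}_\Phi(\Lambda)$ spanned by the quadrilateral's sides and their flow saturations. The key structural input is Lemma \ref{lemma noyau ker}: along stable arcs $\alpha$ integrates to zero, along unstable arcs $\alpha$ integrates to zero, and more globally $\ker\alpha=E^s\oplus E^u$ on $\Lambda$, while $\imath_X\alpha\equiv 1$. Together with condition (b) of Definition \ref{adapted contact form}, which says $X\in\ker d\alpha$ along the weak (un)stable laminations, these are exactly the Reeb-type relations that make $\alpha$ behave like a primitive whose flow-time periods are controlled by $d\alpha$-flux.

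Concretely, first I would fix the $\mathcal{R}$-good quadrilateral $\mathscr{Q}=(x_0,x_1,x_2,x_3)$ with projection $\overline{\mathscr{Q}}=(\bar x_0,\bar x_1,\bar x_2,\bar x_3)$ on $R_i$, and recall $x_4=\Phi^{t(\mathscr{Q})}(x_0)$. The four sides of $\mathscr{Q}$ alternate between stable arcs ($x_0\to x_1$, $x_2\to x_3$) and unstable arcs ($x_1\to x_2$, $x_3\to x_4$); the closing segment is the flow segment from $x_4$ back to $x_0$. I would build a piecewise-smooth $2$-chain $\Sigma$ inside $\mathcal{W}^{cs}_\Phi(\Lambda)\cup\mathcal{W}^{cu}_\Phi(\Lambda)$: for each leaf-arc side, take its saturation under the flow for the (short) times needed to push $x_j$ down to $\bar x_j$ on the cross-section, i.e.\ the ``ribbons'' $\{\Phi^s(\text{arc})\}$ with $s$ ranging over an interval interpolating between the two holonomy times at the endpoints. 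Each such ribbon lies in a single weak (un)stable leaf, on which $d\alpha$ vanishes identically when restricted — indeed $\ker d\alpha$ contains $X$ there by (b), and the leaf is spanned by $X$ and the strong direction, on which $\alpha$ itself is closed-by-contraction; so $\int_{\text{ribbon}}d\alpha=0$. Applying Stokes to $\Sigma$, whose boundary consists of (i) the four sides of $\mathscr{Q}$, (ii) the four projected sides of $\overline{\mathscr{Q}}$ (traversed oppositely), (iii) short flow segments at the eight corners, and (iv) the closing flow segment of length $t(\mathscr{Q})$, I get $0=\int_{\partial\Sigma}\alpha$.

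Now I evaluate $\int_{\partial\Sigma}\alpha$ term by term. On the actual sides of $\mathscr{Q}$ the integral of $\alpha$ is zero (stable/unstable arcs, Lemma \ref{lemma noyau ker}); on the projected sides of $\overline{\mathscr{Q}}$ likewise zero; on each short corner flow segment $\int\alpha=\imath_X\alpha\cdot(\text{length})=\pm(\text{holonomy time})$, and these corner times assemble (with the correct signs dictated by the alternating $s/u$ pattern) into $H^s_\mathcal{S}(\bar x_0,\bar x_1)+H^u_\mathcal{S}(\bar x_1,\bar x_2)+H^s_\mathcal{S}(\bar x_2,\bar x_3)+H^u_\mathcal{S}(\bar x_3,\bar x_0)=H(\mathscr{Q})$ by \eqref{temporal displacemetn}; finally on the closing flow segment $\int\alpha=t(\mathscr{Q})$. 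Since by construction $t(\mathscr{Q})$ is also exactly the net holonomy defect (the reason $x_4\neq x_0$), one checks $t(\mathscr{Q})=-H(\mathscr{Q})$ — so the flow-term and corner-terms are consistent — and the remaining flux through the ``interior face'' of $\Sigma$ sitting inside $R_i$ is precisely $\int_{\widehat{\mathscr{Q}}}d\alpha=\mathrm{Area}(\mathscr{Q})$. Collecting signs from the orientation of $\Sigma$ yields $\mathrm{Area}(\mathscr{Q})+H(\mathscr{Q})=0$, as claimed. An alternative, perhaps cleaner, route avoiding the explicit ribbon construction: write $H(\mathscr{Q})$ via \eqref{limite H} as a limit of finite sums of return times $\tau_\mathcal{S}$, interpret each $\tau_\mathcal{S}(\mathcal{F}^j(\bar x_k))$ as $\int$ of $\alpha$ over a flow segment joining consecutive crossings, and telescope the four alternating sums against $d\alpha$-flux through the iterated images $\mathcal{F}^j(\widehat{\mathscr{Q}})$, using that $\mathcal{F}^*$ preserves $d\alpha$ on the cross-section and that the flux through $\mathcal{F}^{\pm n}(\widehat{\mathscr{Q}})$ stays bounded (indeed its diameter shrinks) so no mass escapes in the limit.

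The main obstacle I anticipate is orientation and sign bookkeeping: getting the $\pm$ in front of each of the four holonomies, the closing flow segment, and the area to line up so that the final identity reads $\mathrm{Area}(\mathscr{Q})=-H(\mathscr{Q})$ rather than with a spurious sign, given that $\widehat{\mathscr{Q}}$ is oriented by $d\alpha$ (equivalently by $ds\wedge dr$ downstairs) and the flow direction is transverse to it via \eqref{cond un peu chiante}. A secondary technical point is justifying Stokes on a chain whose faces lie only in the H\"older-continuous laminations $\mathcal{W}^{cs}_\Phi,\mathcal{W}^{cu}_\Phi$: here one uses that each individual leaf is as smooth as the flow ($\mathcal{C}^{k-1}$ at least, and by Remark \ref{remarque reg} the holonomies are $\mathcal{C}^{1,\beta}$), so each ribbon is a genuine $\mathcal{C}^1$ surface and Stokes applies leaf by leaf; the only continuity needed across leaves is of the boundary data, which is exactly Lemma \ref{continuity holonomies}.
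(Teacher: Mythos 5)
The underlying mechanism you invoke — Stokes on flow-saturated $2$-chains inside the weak (un)stable leaves, where $d\alpha$ vanishes by condition (b) of Definition~\ref{adapted contact form}, together with $\imath_X\alpha\equiv 1$ for flow segments and Lemma~\ref{lemma noyau ker} for strong (un)stable arcs — is exactly the engine of the paper's proof. The paper implements it side by side: it applies Stokes once to $\widehat{\mathscr{Q}}\subset R_i$ to get $\mathrm{Area}(\mathscr{Q})=\sum_j\int_{\bar\Gamma_j}\alpha$, and then, for each $j$, applies Stokes to a flat ``triangle'' $\mathcal{T}_j$ in a single weak stable or weak unstable leaf (bounded by $\bar\Gamma_j$, the strong-leaf arc $\Gamma_j$ of $\mathscr{Q}$, and one flow segment) to show $\int_{\bar\Gamma_j}\alpha=-H^{s/u}_{\mathcal{S}}(\bar x_j,\bar x_{j+1})$. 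Your variant tries to do all four sides at once with a single chain $\Sigma$ of ribbons; that is a legitimate repackaging, but your bookkeeping of the boundary terms goes wrong in several places.

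Concretely: (i) You claim $\int\alpha=0$ on the projected sides $\bar\Gamma_j$ of $\overline{\mathscr{Q}}$. That is false, and is in fact the crux of the matter: the $\bar\Gamma_j$ lie in the transverse rectangle $R_i\cap\mathcal{W}^{cs/cu}_{\Phi}$, \emph{not} in the strong laminations where Lemma~\ref{lemma noyau ker} applies; the whole point is that $\int_{\bar\Gamma_j}\alpha$ equals $-H^{s/u}_{\mathcal{S}}$, and the Stokes identity $\sum_j\int_{\bar\Gamma_j}\alpha=\mathrm{Area}(\mathscr{Q})$ is what produces the area. (ii) The ``eight corner flow segments'' do not ``assemble into $H(\mathscr{Q})$'': if you track the lifts as in the proof of Lemma~\ref{lemma approx}, consecutive ribbons share the flow segment joining $\bar x_j$ to $x_j$, traversed with opposite orientations, so these cancel in pairs; the only surviving flow segment in $\partial\Sigma$ is the one from $\bar x_0=x_0$ to $x_4$, contributing $t(\mathscr{Q})$. (iii) The identity $t(\mathscr{Q})=-H(\mathscr{Q})$ is wrong: chasing the lifts shows $x_4=\Phi^{H(\mathscr{Q})}(x_0)$, i.e.\ $t(\mathscr{Q})=+H(\mathscr{Q})$. (iv) As written your argument is also internally inconsistent: if the boundary integrals over the $\Gamma_j$, the $\bar\Gamma_j$, the corners and the closing segment were all zero (as you claim, after your $t=-H$ cancellation), then Stokes would reduce to $0=0$ and no $\mathrm{Area}(\mathscr{Q})$ term would ever enter; the ``remaining flux through the interior face'' only appears if $\widehat{\mathscr{Q}}$ is adjoined to $\Sigma$, in which case $\int_\Sigma d\alpha\neq 0$ and your starting point fails. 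With the corrections above — namely $\int_{\bar\Gamma_j}\alpha\neq 0$, the corners cancel, and $t(\mathscr{Q})=H(\mathscr{Q})$ — the global-chain version does close up, but it is then essentially the paper's argument with the four triangles $\mathcal{T}_0,\dots,\mathcal{T}_3$ glued together, and the per-side version is cleaner to sign-check.
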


\begin{proof}
	By Stokes theorem, we have 
	$$
	\mathrm{Area}(\mathscr{Q})=\int_{\widehat{\mathscr{Q}}} d\alpha=\sum_{j=0,\dots,3}\int_{\bar \Gamma_j} \alpha.
	$$
	By the definition \eqref{temporal displacemetn} of $H(\mathscr{Q})$ in terms of holonomies, it is sufficient to show that $\int_{\bar \Gamma_0} \alpha=-H_{\mathcal{S}}^s(\bar x_0,\bar x_1)$, $\int_{\bar \Gamma_1} \alpha=-H_{\mathcal{S}}^u(\bar x_1,\bar x_2)$, $\int_{\bar \Gamma_2} \alpha=-H_{\mathcal{S}}^s(\bar x_2,\bar x_3)$, and  $\int_{\bar \Gamma_3} \alpha=-H_{\mathcal{S}}^u(\bar x_3,\bar x_0)$. Let us prove the formula for $\bar\Gamma_0$, the others are proved similarly. 
	
	\begin{figure}[h]
		\includegraphics[scale=1, trim=0 1.2cm 0 0.5cm, clip]{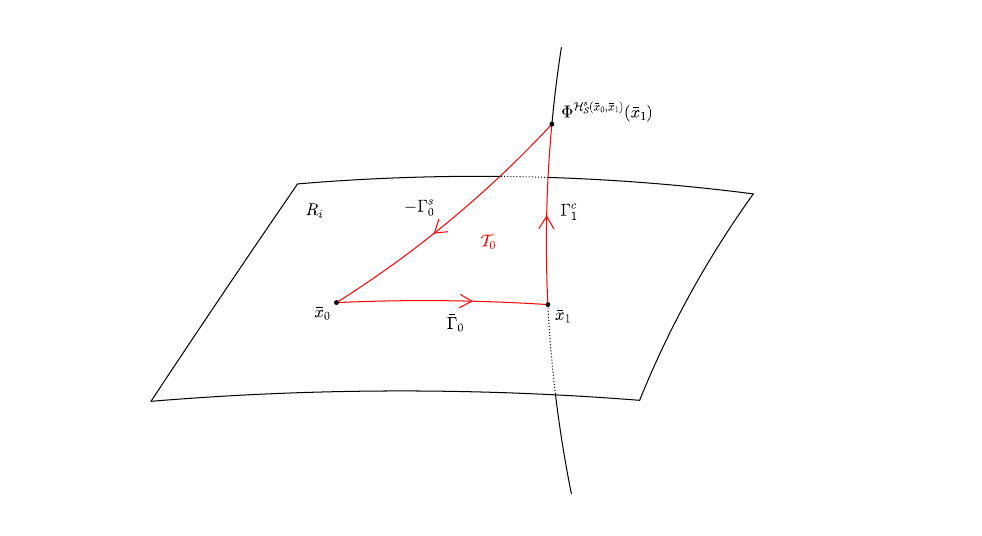}
		\caption{$\mathcal{T}_0$ is the closed region bounded by the arcs $\bar{\Gamma}_0,-\Gamma_0^s,\Gamma_1^c$.}
		\label{Fig T0}
	\end{figure}
	
	 Let $\Gamma_0^s$ be the arc of the stable manifold $\mathcal{W}_{\Phi,\mathrm{loc}}^s(x_0)$ connecting $x_0$ to $x_1$, and let $\Gamma_1^c$ be  the orbit segment $\Gamma_1^c:=\{\Phi^t(x_1)\}_{t \in [0,\mathcal{H}_\mathcal{S}^s(\bar x_0,\bar x_1)]}\subset \mathcal{W}_{\Phi,\mathrm{loc}}^c(x_1)$. We define $\mathcal{T}_0\subset M$ as the set of all points $x \in \mathcal{W}_{\Phi, \mathrm{loc}}^{cs}(x_0)$ in the closed region bounded by the arcs $\bar \Gamma_0,\Gamma_0^s,\Gamma_1^c$, see Figure \ref{Fig T0}. By Stokes theorem, we have 
	 $$
	 \int_{\mathcal{T}_0} d\alpha=\int_{\bar \Gamma_0} \alpha-\int_{\Gamma_0^s} \alpha+ \int_{\Gamma_1^c}\alpha. 
	 $$
	 Since $X|_{\mathcal{W}_\Phi^{cs}(\Lambda)} \in \ker d\alpha|_{\mathcal{W}_\Phi^{cs}(\Lambda)}$, it holds that $\int_{\mathcal{T}_0} d\alpha=0$. By Lemma \ref{lemma noyau ker},  we have $\int_{\Gamma_0^s} \alpha=0$, hence, 
	 $$
	 \int_{\bar \Gamma_0} \alpha=-\int_{\Gamma_1^c}\alpha.
	 $$ 
	 Moreover, 
	 $$
	 \int_{\Gamma_1^c}\alpha=\int_{0}^{H_{\mathcal{S}}^s(\bar x_0,\bar x_1)} \alpha(X(\Phi^t(\bar x_1))) dt. 
	 $$
	 Since $\imath_{X}\alpha|_{\Lambda}\equiv 1$, we also have $\int_{0}^{H_{\mathcal{S}}^s(\bar x_0,\bar x_1)} \alpha(X(\Phi^t(\bar x_1))) dt=H_{\mathcal{S}}^s(\bar x_0,\bar x_1)$, which concludes. 
\end{proof}

As an immediate consequence of Proposition \ref{main propo} and Proposition \ref{lien h aire}, we thus obtain:
\begin{corollary}
	For any small quadrilateral $\mathscr{Q}=(x_0,x_1,x_2,x_3)\in \Lambda^4$, the quantity  $\mathrm{Area}(\mathscr{Q})$ is determined by the lengths of periodic orbits. 
\end{corollary}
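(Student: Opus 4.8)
The plan is simply to chain together the two results just established. First I would note that a sufficiently small quadrilateral $\mathscr{Q}=(x_0,x_1,x_2,x_3)\in\Lambda^4$ is automatically $\mathcal{R}$-good for the fixed Markov family $\mathcal{R}=\{R_1,\dots,R_m\}$ of size $\varepsilon$: if $x_0\in R_i$, then for $\mathscr{Q}$ small enough all four vertices lie in $\cup_{t\in(-\varepsilon/2,\varepsilon/2)}\Phi^t(R_i)$, and the transversality condition \eqref{cond un peu chiante} holds, so that $\widehat{\mathscr{Q}}$ is a well-defined region transverse to the flow on which $d\alpha$ is non-degenerate and $\mathrm{Area}(\mathscr{Q})=\int_{\widehat{\mathscr{Q}}}d\alpha$ makes sense. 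Thus the hypotheses of Proposition \ref{lien h aire} are met.

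Next I would invoke Proposition \ref{lien h aire} to obtain $\mathrm{Area}(\mathscr{Q})=-H(\mathscr{Q})$, and then Proposition \ref{main propo} to conclude that $H(\mathscr{Q})$, and hence $\mathrm{Area}(\mathscr{Q})$, is determined by the lengths of periodic orbits of $\Phi$. Concretely, unwinding Lemma \ref{lemma approx} and Proposition \ref{main propo restr} gives an explicit description: one approximates $\mathscr{Q}$ by $\mathcal{R}$-good quadrilaterals $\mathscr{Q}^n$ whose corners $x_0^n,x_2^n$ are periodic, and for each fixed $n$ one approximates $H(\mathscr{Q}^n)$ by the combinations $T_\Phi(\bar x^{n,k})-(4k+1)T_\Phi(x_0^n)-(4k+1)T_\Phi(x_2^n)$ along the sequence of periodic orbits $\bar x^{n,k}$ built with the prescribed combinatorics; a diagonal extraction then exhibits $\mathrm{Area}(\mathscr{Q})$ as a limit of such expressions in the periodic length data alone.

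I expect essentially no obstacle here, since all the substantive work already resides in the two cited propositions: the geometric input (Stokes' theorem together with the Reeb conditions $\imath_X\alpha|_\Lambda\equiv1$ and $X\in\ker d\alpha$ on the weak (un)stable laminations) is contained in Proposition \ref{lien h aire}, while the dynamical input (density of periodic orbits, hyperbolic shadowing and the $\mathcal{C}^{1,1/2}$ linearization near periodic points, Lipschitz continuity of return times) is contained in Proposition \ref{main propo}. The only point deserving a line of justification is that ``small'' genuinely implies the standing hypotheses of Proposition \ref{lien h aire}, which is immediate from the definition of an $\mathcal{R}$-good quadrilateral and of the size of the Markov family.
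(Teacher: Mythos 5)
Your proposal is correct and matches the paper exactly: the corollary is stated there as an immediate consequence of Proposition \ref{lien h aire} (which gives $\mathrm{Area}(\mathscr{Q})=-H(\mathscr{Q})$) and Proposition \ref{main propo} (which shows $H(\mathscr{Q})$ is determined by periodic lengths), with no further proof supplied. Your extra remarks about why ``small'' implies $\mathcal{R}$-goodness and condition \eqref{cond un peu chiante}, and the unwinding of the double approximation, are sound elaborations of the same argument.
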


\begin{corollary}\label{egalite aires}
	Fix $k \geq 2$. 
	For $i=1,2$, let $\Phi_i=(\Phi_i^t)_{t \in \R}$ be a $\mathcal{C}^k$ Axiom A flow defined on a $3$-manifold $M_i$. Let $\Lambda_i$ be a basic set for $\Phi_i$, and let $\alpha_i$ be a smooth contact form adapted to $\Lambda_i$. If there exists an iso-length-spectral flow conjugacy $\Psi\colon \Lambda_1 \to  \Lambda_2$ between $\Phi_1|_{\Lambda_1}$ and $\Phi_2|_{\Lambda_2}$, then, for any point $x_0 \in \Lambda_1$, and for any small quadrilateral $\mathscr{Q}=(x_0,x_1,x_2,x_3)\in \Lambda_1^4$, it holds 
	$$
	\mathrm{Area}(\mathscr{Q})=\mathrm{Area}(\Psi(\mathscr{Q})),
	$$
	where $\Psi(\mathscr{Q})$ is the quadrilateral  $\Psi(\mathscr{Q}):=(\Psi(x_0),\Psi(x_1),\Psi(x_2),\Psi(x_3))\in \Lambda_2^4$. 
\end{corollary}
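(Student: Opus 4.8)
Here is a plan for proving Corollary~\ref{egalite aires}.

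The plan is to reduce the claimed equality of areas to an equality of temporal displacements, and then to invoke the fact (Proposition~\ref{main propo}) that temporal displacements are read off from the length spectrum. First I would check that $\Psi(\mathscr{Q}):=(\Psi(x_0),\Psi(x_1),\Psi(x_2),\Psi(x_3))$ is again a \emph{small} quadrilateral, now lying in $\Lambda_2^4$. Since $\Psi$ is a flow conjugacy, it sends $\Phi_1$-orbits to $\Phi_2$-orbits and hence transports the weak/strong stable/unstable laminations of $\Phi_1|_{\Lambda_1}$ onto those of $\Phi_2|_{\Lambda_2}$; in particular the defining relations $x_1\in\mathcal{W}^s_{\Phi_1,\mathrm{loc}}(x_0)$, $x_2\in\mathcal{W}^u_{\Phi_1,\mathrm{loc}}(x_1)$, $x_3\in\mathcal{W}^s_{\Phi_1,\mathrm{loc}}(x_2)\cap\mathcal{W}^{cu}_{\Phi_1,\mathrm{loc}}(x_0)$ pass to $\Psi(\mathscr{Q})$, and the image stays small because $\Psi$ is uniformly continuous on the compact set $\Lambda_1$. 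Fixing proper Markov families $\mathcal{R}_1$ for $\Phi_1|_{\Lambda_1}$ and $\mathcal{R}_2$ for $\Phi_2|_{\Lambda_2}$ of small enough size that $\mathscr{Q}$ is $\mathcal{R}_1$-good and $\Psi(\mathscr{Q})$ is $\mathcal{R}_2$-good, Proposition~\ref{lien h aire} gives $\mathrm{Area}(\mathscr{Q})=-H(\mathscr{Q})$ (computed for $\Phi_1$) and $\mathrm{Area}(\Psi(\mathscr{Q}))=-H(\Psi(\mathscr{Q}))$ (computed for $\Phi_2$), so it suffices to prove $H(\mathscr{Q})=H(\Psi(\mathscr{Q}))$.

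Next I would reduce to the case $x_0,x_2\in\mathrm{Per}(\Phi_1)$. By Lemma~\ref{lemma approx} there is a sequence $(\mathscr{Q}^n)_n$ of $\mathcal{R}_1$-good quadrilaterals with $x_0^n,x_2^n\in\mathrm{Per}(\Phi_1)$, $\mathscr{Q}^n\to\mathscr{Q}$ and $H(\mathscr{Q})=\lim_n H(\mathscr{Q}^n)$. Applying $\Psi$ gives $\Psi(\mathscr{Q}^n)\to\Psi(\mathscr{Q})$ with $\Psi(x_0^n),\Psi(x_2^n)\in\mathrm{Per}(\Phi_2)$, and, choosing $\mathcal{R}_2$ so that $\Psi(\mathscr{Q})$ lies in the interior of its rectangle, the $\Psi(\mathscr{Q}^n)$ are $\mathcal{R}_2$-good for $n$ large. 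Since temporal displacements depend continuously on $\mathcal{R}$-good quadrilaterals, by Lemma~\ref{continuity holonomies} together with the formula \eqref{temporal displacemetn}, we get $H(\Psi(\mathscr{Q}))=\lim_n H(\Psi(\mathscr{Q}^n))$. Hence it is enough to prove $H(\mathscr{Q}^n)=H(\Psi(\mathscr{Q}^n))$ for each $n$, \ie we may assume $x_0,x_2$ periodic. For such a quadrilateral, Proposition~\ref{main propo restr} provides a sequence $\bar x^n\in\mathrm{Per}(\Phi_1)$, built only from the combinatorics of how the orbits of $x_0,x_2$ and of the heteroclinic points $x_1,x_3$ shadow one another, with
$$
H(\mathscr{Q})=\lim_{n\to+\infty}\bigl[T_{\Phi_1}(\bar x^n)-(4n+1)\,T_{\Phi_1}(x_0)-(4n+1)\,T_{\Phi_1}(x_2)\bigr].
$$
I would then push this forward: $\Psi(\bar x^n)\in\mathrm{Per}(\Phi_2)$, and since $\Psi$ is a topological flow conjugacy it carries the heteroclinic cycle $\mathcal{O}_{\Phi_1}(x_0)\cup\overline{\mathcal{O}_{\Phi_1}(x_1)}\cup\mathcal{O}_{\Phi_1}(x_2)\cup\overline{\mathcal{O}_{\Phi_1}(x_3)}$ and the orbit of $\bar x^n$ onto the corresponding objects for $\Psi(\mathscr{Q})$; in particular $\Psi(\bar x^n)$ shadows the heteroclinic cycle of $\Psi(\mathscr{Q})$, with winding numbers around $\mathcal{O}_{\Phi_2}(\Psi(x_0))$ and $\mathcal{O}_{\Phi_2}(\Psi(x_2))$ equal to those of $\bar x^n$, namely $4n+1$, because winding numbers are homeomorphism invariants. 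Re-running the estimates of Lemma~\ref{claim chiante} for $\Phi_2$ with the orbits $\Psi(\bar x^n)$ then yields
$$
H(\Psi(\mathscr{Q}))=\lim_{n\to+\infty}\bigl[T_{\Phi_2}(\Psi(\bar x^n))-(4n+1)\,T_{\Phi_2}(\Psi(x_0))-(4n+1)\,T_{\Phi_2}(\Psi(x_2))\bigr].
$$
As $\Psi$ is iso-length-spectral, $T_{\Phi_2}(\Psi(y))=T_{\Phi_1}(y)$ for every periodic $y$, so the two right-hand sides agree term by term; hence $H(\mathscr{Q})=H(\Psi(\mathscr{Q}))$, and by the first paragraph $\mathrm{Area}(\mathscr{Q})=\mathrm{Area}(\Psi(\mathscr{Q}))$.

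The main obstacle is the transport step just described. Because the Markov families $\mathcal{R}_1,\mathcal{R}_2$ are chosen independently, the symbolic codings entering the construction of $\bar x^n$ are not literally intertwined by $\Psi$; what must be argued carefully is that the \emph{only} data feeding into the limit formula for $H(\mathscr{Q})$ are (i) the periods $T_{\Phi_1}(\bar x^n),T_{\Phi_1}(x_0),T_{\Phi_1}(x_2)$ and (ii) the integer coefficients $4n+1$, and that both are preserved by the topological, iso-length-spectral conjugacy $\Psi$ — the periods by hypothesis, the coefficients because they are winding numbers of one periodic orbit around another and hence topological invariants. Once this is in place, no further computation is needed, and the corollary follows.
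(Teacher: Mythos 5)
Your proof is correct, and it faithfully reconstructs the argument the paper leaves implicit (the paper states Corollary~\ref{egalite aires} without proof, as a consequence of Propositions~\ref{main propo} and~\ref{lien h aire}). The worry you flag at the end --- that $\mathcal{R}_1$ and $\mathcal{R}_2$ are chosen independently, so the symbolic codings need not be literally intertwined --- is genuine, and your resolution by topological invariance of the shadowing combinatorics is fine; a cleaner fix is simply to take $\mathcal{R}_2$ to be the Markov family induced by $\Psi(\mathcal{R}_1)$ (projected onto smooth transversals along $\Phi_2$-flow lines), so that $\Psi$ intertwines the codings tautologically.

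That said, the route through Propositions~\ref{main propo restr} and~\ref{lemma approx} is a detour. There is a shorter argument that makes the statement transparent and shows the iso-length-spectral hypothesis is actually automatic once $\Psi$ is a \emph{flow} conjugacy. Recall $x_4(\mathscr{Q})=\mathcal{W}^c_{\Phi_1,\mathrm{loc}}(x_0)\cap\mathcal{W}^u_{\Phi_1,\mathrm{loc}}(x_3)=\Phi_1^{t(\mathscr{Q})}(x_0)$. A short telescoping computation with the lift times $\tau_j$ (the times sending $\bar x_j$ to $x_j$) shows that the sum of holonomies in \eqref{temporal displacemetn} collapses to $H(\mathscr{Q})=t(\mathscr{Q})$: with $x_0=\bar x_0$, one has $H_\mathcal{S}^s(\bar x_0,\bar x_1)=\tau_1$, $H_\mathcal{S}^u(\bar x_1,\bar x_2)=\tau_2-\tau_1$, $H_\mathcal{S}^s(\bar x_2,\bar x_3)=\tau_3-\tau_2$, $H_\mathcal{S}^u(\bar x_3,\bar x_0)=t(\mathscr{Q})-\tau_3$, so $H(\mathscr{Q})$ is the intrinsic time shift $t(\mathscr{Q})$, independent of the Markov family. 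Since $\Psi$ is a flow conjugacy it preserves all five laminations $\mathcal{W}^{s},\mathcal{W}^{u},\mathcal{W}^{c},\mathcal{W}^{cs},\mathcal{W}^{cu}$ and the time parametrization, so $\Psi(x_4(\mathscr{Q}))=x_4(\Psi(\mathscr{Q}))$ and
\[
\Phi_2^{t(\Psi(\mathscr{Q}))}(\Psi(x_0))=x_4(\Psi(\mathscr{Q}))=\Psi(x_4(\mathscr{Q}))=\Psi\bigl(\Phi_1^{t(\mathscr{Q})}(x_0)\bigr)=\Phi_2^{t(\mathscr{Q})}(\Psi(x_0)),
\]
whence $H(\Psi(\mathscr{Q}))=t(\Psi(\mathscr{Q}))=t(\mathscr{Q})=H(\mathscr{Q})$, and Proposition~\ref{lien h aire} gives the equality of areas. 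The periodic-approximation machinery you invoke (Propositions~\ref{main propo} and~\ref{main propo restr}) is there for a different reason --- to show that $H(\mathscr{Q})$ is \emph{recoverable from the length data alone}, which is what one ultimately needs for the billiard application --- not because it is the shortest path to Corollary~\ref{egalite aires}.
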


\subsection{Smoothness of the conjugacy}

In the following, we fix a point $x_0 \in \Lambda\cap R_i$, for some $i \in \{1,\dots,m\}$.  Let $Q_0$ be the set of all sufficiently small quadrilaterals $\mathscr{Q}=(x_0,x_1,x_2,x_3)\in \Lambda^4$ based at $x_0$. 
The goal of this part is to show that 
the set of areas $\{\mathrm{Area}(\mathscr{Q})\}_{\mathscr{Q}\in Q_0}$ determines the ``infinitesimal'' shape of the set $\Lambda\cap \mathcal{W}_{{\mathcal{F}},\mathrm{loc}}^s(x_0)$, resp.  $\Lambda\cap \mathcal{W}_{\mathcal{F},\mathrm{loc}}^u(x_0)$.

In particular, given another Axiom A flow whose restriction to some basic set is conjugate to $\Phi|_\Lambda$ by some homeomorphism $\Psi$, and such that, for any small quadrilateral $\mathscr{Q}$, it holds $\mathrm{Area}(\mathscr{Q})=\mathrm{Area}(\Psi(\mathscr{Q}))$, we show that $\Psi$ is 
differentiable at any point of $\Lambda$, with H\"older continuous differential.

\begin{figure}[h]
	\includegraphics[scale=0.85, trim=0 2cm 0 2cm, clip]
	{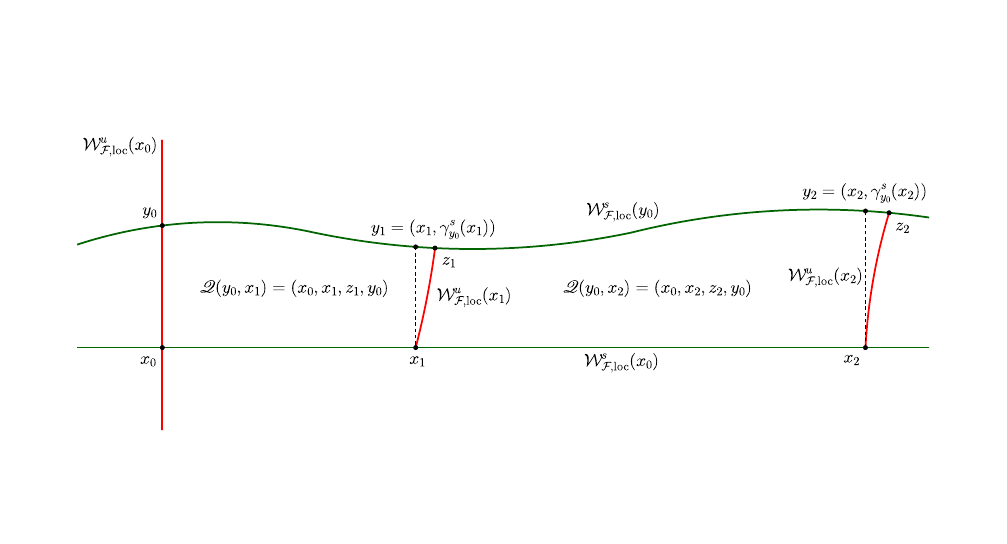}
	\caption{Small quadrilaterals.}
\end{figure}

We take a chart $\mathcal{R}=\mathcal{R}_{x_0} \colon \mathcal{U}_0 \to \mathcal{V}_0$ from a neighborhood $\mathcal{U}_0 \subset R_i$ of $x_0$ to a neighborhood $\mathcal{V}_0 \subset \R^2$ of $\{0_{\R^2}\}$ such that $\mathcal{R}(\mathcal{W}_{\mathcal{F},\mathrm{loc}}^s(x_0)\cap \mathcal{U}_0 )\subset (\R \times \{0\})\cap \mathcal{V}_0$ and $\mathcal{R}(\mathcal{W}_{\mathcal{F},\mathrm{loc}}^u(x_0)\cap \mathcal{U}_0 )\subset (\{0\}\times \R)\cap \mathcal{V}_0$. In the following, we thus identify  $\mathcal{W}_{\mathcal{F},\mathrm{loc}}^s(x_0)$, resp. $\mathcal{W}_{\mathcal{F},\mathrm{loc}}^u(x_0)$ with the horizontal, resp. vertical coordinate axis of  $\R^2$. 
Moreover, for any point $v= \mathcal{R}(u) \in  \mathcal{V}_0$, we denote by $\rho(v) d\xi\wedge d\eta:=\mathcal{R}_* (d\alpha_u)$ the corresponding area form. For each point $y_0 \in \mathcal{W}_{\mathcal{F},\mathrm{loc}}^u(x_0)$, we see $\mathcal{W}_{\mathcal{F},\mathrm{loc}}^s(y_0)$ as the graph of some function $\gamma_{y_0}^s$ over the horizontal axis. By an abuse of notation, in the following, we identify an object and its image in the chart $\mathcal{R}$. For instance, a point $x_1 \in \mathcal{W}_{\mathcal{F},\mathrm{loc}}^s(x_0)$ will be identified with the point $(x_1,0)\in \R^2$; besides, we will denote by $V_{x_1}:=\big(\{x_1\}\times \R\big) \cap \mathcal{V}_0$ the vertical segment in $\mathcal{V}_0$ passing through $x_1\simeq (x_1,0)$.

\begin{defi}[Holonomy maps for $\mathcal{W}_{\mathcal{F},\mathrm{loc}}^s$] 
	For 
	any points  $y_0 \in \mathcal{W}_{\mathcal{F},\mathrm{loc}}^u(x_0)\cap \Lambda$ and $x_1 \in \mathcal{W}_{\mathcal{F},\mathrm{loc}}^s(x_0)$, we define the point $\mathcal{H}^s_{x_0,x_1}(y_0)\in V_{x_1}$ as 
	$$
	\{\mathcal{H}^s_{x_0,x_1}(y_0)\}:=  
	\mathcal{W}_{\mathcal{F},\mathrm{loc}}^s(y_0) \cap V_{x_1}=(x_1,\gamma_{y_0}^s(x_1)). 
	$$
\end{defi}

In other words, the map $\mathcal{H}^s_{x_0,x_1}$ is the holonomy map along $\mathcal{W}_{\mathcal{F},\mathrm{loc}}^s$ from $\mathcal{W}_{\mathcal{F},\mathrm{loc}}^u(x_0)\cap \Lambda\simeq V_{x_0}$ to $V_{x_1}$. To ease the notation, we also abbreviate $y_1=y_1(y_0):=\mathcal{H}^s_{x_0,x_1}(y_0)$. Note  that, \emph{a priori},  $y_1 \notin \Lambda$. 

\begin{lemma}
	There exists a continuous function $C=C_{x_0}\colon \mathcal{W}_{\mathcal{F},\mathrm{loc}}^s(x_0) \to \R$ such that for any $x_1 \in \mathcal{W}_{\mathcal{F},\mathrm{loc}}^s(x_0)$, it holds   
	\begin{equation}\label{lim holonomy}
	\lim_{\mathcal{W}_{\mathcal{F},\mathrm{loc}}^u(x_0)\cap \Lambda\ni y_0 \to x_0} \frac{d(y_1(y_0),x_1)}{d(y_0,x_0)}=\lim_{\mathcal{W}_{\mathcal{F},\mathrm{loc}}^u(x_0)\cap \Lambda\ni y_0 \to x_0} \frac{\gamma_{y_0}^s(x_1)}{\gamma_{y_0}^s(x_0)}=C(x_1).
	\end{equation}
	Moreover, it holds 
	$$
	\lim_{\mathcal{W}_{\mathcal{F},\mathrm{loc}}^s(x_0)\ni x_1\to x_0} C(x_1)=1. 
	$$
\end{lemma}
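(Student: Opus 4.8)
The plan is to recognise both difference quotients in \eqref{lim holonomy} as the difference quotient, at the base point $x_0$, of the stable holonomy $\mathcal{H}^s_{x_0,x_1}$, and then to invoke the $\mathcal{C}^{1,\beta}$-regularity of stable holonomies of one-dimensional laminations (Remark \ref{remarque reg}, i.e.\ Pinto--Rand \cite{PinRan}).

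First I would unwind everything in the chart $\mathcal{R}$. A point $y_0\in\mathcal{W}_{\mathcal{F},\mathrm{loc}}^u(x_0)\cap\Lambda$ reads $y_0=(0,t)$ with $t=t(y_0)\in\R$, and since $\mathcal{W}_{\mathcal{F},\mathrm{loc}}^s(y_0)$ is the graph of $\gamma_{y_0}^s$ passing \emph{through} $y_0$, one has $\gamma_{y_0}^s(x_0)=\gamma_{y_0}^s(0)=t$; likewise $\gamma_{x_0}^s\equiv 0$ because $\mathcal{W}_{\mathcal{F},\mathrm{loc}}^s(x_0)$ is the horizontal axis. Identifying $V_{x_0}$ and $V_{x_1}$ with $\R$ through the vertical coordinate, the map $t\mapsto \mathcal{H}^s_{x_0,x_1}(0,t)=\gamma_{(0,t)}^s(x_1)$ fixes $0$, and
\[
\frac{\gamma_{y_0}^s(x_1)}{\gamma_{y_0}^s(x_0)}=\frac{\gamma_{(0,t)}^s(x_1)-\gamma_{(0,0)}^s(x_1)}{t-0}
\]
is precisely the difference quotient at $x_0$ of $\mathcal{H}^s_{x_0,x_1}\colon V_{x_0}\to V_{x_1}$. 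Measuring $d$ in the chart, $d(y_1(y_0),x_1)/d(y_0,x_0)$ equals the absolute value of the same quotient, so the two limits in \eqref{lim holonomy}, once shown to exist, coincide and are positive.

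Next I would set $C(x_1):=(D\mathcal{H}^s_{x_0,x_1})(x_0)$, the derivative at $x_0$ of the stable holonomy onto $V_{x_1}$. Since the stable and unstable leaves are one-dimensional and $\Lambda$ has local product structure, Remark \ref{remarque reg} (based on \cite{PinRan}) gives that $\mathcal{H}^s_{x_0,x_1}$ is $\mathcal{C}^{1,\beta}$ in the Whitney sense along $\Lambda$ for some $\beta\in(0,1)$, and that it varies continuously, in that topology, with the target transversal $V_{x_1}$. The first fact gives that the difference quotients above converge, as $\mathcal{W}_{\mathcal{F},\mathrm{loc}}^u(x_0)\cap\Lambda\ni y_0\to x_0$, to $C(x_1)$ (here one uses that $x_0$ is not isolated in $\mathcal{W}_{\mathcal{F},\mathrm{loc}}^u(x_0)\cap\Lambda$, since $\Lambda$ is a non-trivial basic set with local product structure); the second gives that $x_1\mapsto C(x_1)$ is continuous on $\mathcal{W}_{\mathcal{F},\mathrm{loc}}^s(x_0)$. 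Finally, $\mathcal{H}^s_{x_0,x_0}=\mathrm{id}$ forces $C(x_0)=1$, hence $\lim_{\mathcal{W}_{\mathcal{F},\mathrm{loc}}^s(x_0)\ni x_1\to x_0}C(x_1)=C(x_0)=1$ by continuity of $C$, which is the last assertion.

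The only genuinely non-formal step is the careful use of Pinto--Rand: one has to verify that its hypotheses are met here, that the $\mathcal{C}^{1,\beta}$ conclusion is the Whitney-along-$\Lambda$ version which makes the limit $y_0\to x_0$ taken within $\Lambda$ meaningful, and that the regularity is uniform enough in $x_1$ for $C$ to be truly continuous on $\mathcal{W}_{\mathcal{F},\mathrm{loc}}^s(x_0)$. Everything else is bookkeeping in the chart $\mathcal{R}$.
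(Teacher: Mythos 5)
Your proposal is correct and follows essentially the same route as the paper: both identify the two difference quotients with that of the stable holonomy $\mathcal{H}^s_{x_0,x_1}$ at $x_0$, invoke the $\mathcal{C}^{1,\beta}$-regularity from Pinto--Rand (Remark \ref{remarque reg}) to get existence of the limit $C(x_1)=D\mathcal{H}^s_{x_0,x_1}(x_0)$ and continuity of $C$, and use the $\mathcal{C}^1$-convergence $\mathcal{H}^s_{x_0,x_1}\to\mathrm{id}$ as $x_1\to x_0$ to conclude $C(x_0)=1$. Your chart-level bookkeeping ($\gamma_{x_0}^s\equiv 0$, $\gamma_{y_0}^s(x_0)=t(y_0)$) merely makes explicit what the paper leaves implicit.
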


\begin{proof}
	According to Remark \ref{remarque reg}, stable holonomy maps 
	are $\mathcal{C}^{1,\beta}$, for some $\beta \in (0,1)$. 
	For any $y_0 \in \mathcal{W}_{\mathcal{F},\mathrm{loc}}^u(x_0)\cap \Lambda$, we let $y_1=y_1(y_0)\in V_{x_1}$ be defined as above. As $\mathcal{H}^s_{x_0,x_1}(x_0)=x_1$ and $\mathcal{H}^s_{x_0,x_1}(y_0)=y_1$, the quotient  in \eqref{lim holonomy} can be written as $\frac{d(\mathcal{H}^s_{x_0,x_1}(y_0),\mathcal{H}^s_{x_0,x_1}(x_0))}{d(y_0,x_0)}$. From the definition of $\gamma_{y_0}^s$, this quantity is also equal to $\frac{\gamma_{y_0}^s(x_1)}{\gamma_{y_0}^s(x_0)}$. Moreover, it has a limit as $y_0 \to x_0$, which we denote by $C(x_1)\in \R$. We thus get a continuous map $C=C_{x_0}\colon \mathcal{W}_{\mathcal{F},\mathrm{loc}}^s(x_0) \to \R$.  Moreover, the holonomy map $\mathcal{H}^s_{x_0,x_1}$ converges to the identity in the $\mathcal{C}^1$ topology as $x_1 \to x_0$, hence $\lim_{x_1 \to x_0} C(x_1)=C(x_0)=1$.  
\end{proof}

For any points $y_0\in \mathcal{W}_{\mathcal{F},\mathrm{loc}}^u(x_0)\cap \Lambda$, $x_1\in \mathcal{W}_{\mathcal{F},\mathrm{loc}}^s(x_0)\cap \Lambda$ close to $x_0$, we also abbreviate $z_1=z_1(y_0):=[y_0,x_1]_{R_i}$. 
Recall that by local product structure, we have $z_1\in \Lambda$. We denote by $\mathscr{Q}(y_0,x_1):=(x_0,x_1,z_1,y_0)\in \Lambda^4$ the associated quadrilateral. 
\begin{lemma}
	For any points $y_0\in \mathcal{W}_{\mathcal{F},\mathrm{loc}}^u(x_0)\cap \Lambda$, $x_1\in \mathcal{W}_{\mathcal{F},\mathrm{loc}}^s(x_0)\cap \Lambda$ close to $x_0$, 
	the area of the quadrilateral $\mathscr{Q}(y_0,x_1)$ is  equal to
	\begin{equation}\label{approximation aire}
	\mathrm{Area}(\mathscr{Q}(y_0,x_1))=(y_0-x_0) (x_1-x_0)[\rho(x_0)+o(1)],
	\end{equation}
	where $\rho$ is the density function of $\mathcal{R}_*(d\alpha)$ introduced above.
	Therefore,  for any points $y_0\in \mathcal{W}_{\mathcal{F},\mathrm{loc}}^u(x_0)\cap \Lambda$,  $x_1,x_2\in \mathcal{W}_{\mathcal{F},\mathrm{loc}}^s(x_0)\cap \Lambda$ close to $x_0$, we have\footnote{We thank Disheng Xu for the idea to use three points $x_0,x_1,x_2$ in the same leaf and consider the ratio of areas to get rid of the ``width'' of quadrilaterals.}
	\begin{equation}\label{quotient des aires}
	\frac{\mathrm{Area}(\mathscr{Q}(y_0,x_2))}{\mathrm{Area}(\mathscr{Q}(y_0,x_1))}=\frac{x_2-x_0}{x_1-x_0}+o(1).
	\end{equation}
\end{lemma}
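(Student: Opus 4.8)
The plan is to carry out both computations inside the chart $\mathcal{R}=\mathcal{R}_{x_0}$, in which $x_0$ is identified with the origin, $\mathcal{W}_{\mathcal{F},\mathrm{loc}}^s(x_0)$ with the horizontal axis, $\mathcal{W}_{\mathcal{F},\mathrm{loc}}^u(x_0)$ with the vertical axis, and, via the change of variables $\mathcal{R}_*(d\alpha)=\rho\,d\xi\wedge d\eta$, $\mathrm{Area}(\mathscr{Q}(y_0,x_1))=\int_{\widehat{\mathscr{Q}}}\rho(\xi,\eta)\,d\xi\,d\eta$, where $\widehat{\mathscr{Q}}:=\widehat{\mathscr{Q}(y_0,x_1)}$. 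The strategy is to reduce this integral to $\rho(x_0)$ times the planar Lebesgue area of $\widehat{\mathscr{Q}}$ by continuity of $\rho$, and then to show that this area equals $(x_1-x_0)(y_0-x_0)(1+o(1))$ as $(x_1,y_0)\to(x_0,x_0)$.

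The first step is to make the region $\widehat{\mathscr{Q}}$ explicit. Applying its definition to $\mathscr{Q}(y_0,x_1)=(x_0,x_1,z_1,y_0)$ with $z_1=[y_0,x_1]_{R_i}$, the region is bounded by: the stable arc $\bar\Gamma_0\subset\mathcal{W}_{R_i}^s(x_0)$ from $x_0$ to $x_1$, which is the segment of the horizontal axis between $0$ and $x_1$; the unstable arc $\bar\Gamma_1\subset\mathcal{W}_{R_i}^u(x_1)$ from $x_1$ to $z_1$; the stable arc $\bar\Gamma_2\subset\mathcal{W}_{R_i}^s(z_1)=\mathcal{W}_{R_i}^s(y_0)$ from $z_1$ to $y_0$, which is the portion of the graph $\{(\xi,\gamma_{y_0}^s(\xi))\}$ joining $z_1$ to $(0,y_0)$; and the unstable arc $\bar\Gamma_3\subset\mathcal{W}_{R_i}^u(y_0)=\mathcal{W}_{R_i}^u(x_0)$ from $y_0$ to $x_0$, which is the segment of the vertical axis between $y_0$ and $0$. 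Thus $\widehat{\mathscr{Q}}$ is a ``curvilinear rectangle'' with two sides on the coordinate axes, of lengths $|x_1-x_0|$ and $|y_0-x_0|$, and with the other two sides a piece of $\mathcal{W}_{R_i}^u(x_1)$ and a piece of $\mathcal{W}_{R_i}^s(y_0)$.

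The heart of the argument is the estimate $\mathrm{Leb}_2(\widehat{\mathscr{Q}})=(x_1-x_0)(y_0-x_0)(1+o(1))$. For the stable side, the preceding holonomy lemma, together with the continuity of the family of stable holonomy maps, gives $\gamma_{y_0}^s(\xi)=(y_0-x_0)\bigl(1+o(1)\bigr)$ uniformly for $\xi$ between $x_0$ and $x_1$, so that $z_1$ lies at height $(y_0-x_0)(1+o(1))$. For the unstable side, by Remark \ref{remarque reg} the unstable lamination is $\mathcal{C}^{1,\beta}$, hence the arc $\bar\Gamma_1\subset\mathcal{W}_{R_i}^u(x_1)$ stays within horizontal distance $o\bigl(|y_0-x_0|\bigr)$ of the vertical line $\{\xi=x_1\}$, so that $z_1$ has first coordinate $x_1+o(|y_0-x_0|)$. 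Slicing $\widehat{\mathscr{Q}}$ by vertical lines then yields $\mathrm{Leb}_2(\widehat{\mathscr{Q}})=\int_{x_0}^{x_1}\gamma_{y_0}^s(\xi)\,d\xi$ up to a corner correction negligible with respect to $(x_1-x_0)(y_0-x_0)$, and the integral equals $(x_1-x_0)(y_0-x_0)(1+o(1))$. Since $\rho$ is continuous and $\widehat{\mathscr{Q}}$ shrinks to $\{x_0\}$, we get $\rho=\rho(x_0)+o(1)$ on $\widehat{\mathscr{Q}}$, hence $\mathrm{Area}(\mathscr{Q}(y_0,x_1))=(y_0-x_0)(x_1-x_0)\bigl(\rho(x_0)+o(1)\bigr)$, which is \eqref{approximation aire}. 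Formula \eqref{quotient des aires} follows at once: $d\alpha|_{\widehat{\mathscr{Q}}}$ is non-degenerate so $\rho(x_0)\neq0$, and dividing the two instances of \eqref{approximation aire} for $x_2$ and for $x_1$ (same $y_0$) makes the factors $(y_0-x_0)$ and $\rho(x_0)+o(1)$ cancel up to $1+o(1)$, leaving $\dfrac{\mathrm{Area}(\mathscr{Q}(y_0,x_2))}{\mathrm{Area}(\mathscr{Q}(y_0,x_1))}=\dfrac{x_2-x_0}{x_1-x_0}(1+o(1))=\dfrac{x_2-x_0}{x_1-x_0}+o(1)$.

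The main obstacle I expect is precisely controlling the two curved sides of $\widehat{\mathscr{Q}}$ uniformly: showing that $\bar\Gamma_1$ and $\bar\Gamma_2$, which are merely $\mathcal{C}^{1,\beta}$ leaves, deviate from the coordinate lines $\{\xi=x_1\}$ and $\{\eta=y_0\}$ by amounts small enough that the area of $\widehat{\mathscr{Q}}$ differs from that of the rectangle $[x_0,x_1]\times[x_0,y_0]$ only by a multiplicative factor $1+o(1)$ (and not merely up to an additive error) — which is where one genuinely uses the $\mathcal{C}^{1,\beta}$-regularity of the holonomies from Remark \ref{remarque reg} and the uniform version of the preceding holonomy lemma, together with the estimate of the corner region near $z_1$, rather than a purely topological argument.
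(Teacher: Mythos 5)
Your proposal follows essentially the same route as the paper: work in the chart $\mathcal{R}$, slice $\widehat{\mathscr{Q}}$ by vertical lines, use the holonomy estimate for $\gamma_{y_0}^s$, bound the curved-corner discrepancy, and then freeze $\rho$ at $x_0$ by continuity. The decomposition, the use of the preceding lemma on $C(\xi)$, and the passage from \eqref{approximation aire} to \eqref{quotient des aires} by cancellation all coincide with the paper's argument.

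The one substantive point you leave unresolved is precisely the one you flag at the end, and the paper deals with it by a regime restriction rather than an unconditional estimate. The corner region bounded by $\bar\Gamma_1$, $V_{x_1}$ and $\mathcal{W}_{\mathcal{F},\mathrm{loc}}^s(y_0)$ has horizontal extent $o(y_0-x_0)$ (angle between $\mathcal{W}^u_{\mathcal{F},\mathrm{loc}}(x_1)$ and $V_{x_1}$ tending to $0$ as $x_1\to x_0$) and vertical extent $O(y_0-x_0)$, hence area $o\bigl((y_0-x_0)^2\bigr)$. This is \emph{not} automatically $o\bigl((y_0-x_0)(x_1-x_0)\bigr)$: if $x_1-x_0$ is much smaller than $y_0-x_0$, the corner error can dominate the main term. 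The paper therefore explicitly restricts to the regime $y_0-x_0\leq x_1-x_0$ (which is all that is needed downstream, since in the ratio argument of Proposition~\ref{differnti} one may always order $d(x_0,x_1)\leq d(x_0,x_2)$ and take $y_0$ closest of all to $x_0$). You should state this restriction; without it, the assertion that the corner is ``negligible with respect to $(x_1-x_0)(y_0-x_0)$'' is false in general, and your final sentence correctly anticipates that this is where the genuine work lies.
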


\begin{proof}
For any $y_0\in \mathcal{W}_{\mathcal{F},\mathrm{loc}}^u(x_0)\cap \Lambda$, $x_1\in \mathcal{W}_{\mathcal{F},\mathrm{loc}}^s(x_0)\cap \Lambda$ close to $x_0$, we have 
$$
\mathrm{Area}(\mathscr{Q}(y_0,x_1))=\int_{x_0}^{x_1}\left(\int_{0}^{\gamma_{y_0}^s(\xi)}  \rho(\xi,\eta) \, d\eta\right) d\xi+o\big((y_1-x_1)^2\big),
$$
where $y_1=y_1(y_0)$.
Here, we use the fact that the unstable lamination $\mathcal{W}_{\mathcal{F}}^u(y_0)$ is $\mathcal{C}^1$, so that the angle between $\mathcal{W}_{\mathcal{F},\mathrm{loc}}^u(x_1)$ and $V_{x_1}$ is going to $0$ as $x_1 \to x_0$, and hence, the area of the missing ``triangle'' bounded by $\mathcal{W}_{\mathcal{F},\mathrm{loc}}^s(y_0)$,  $\mathcal{W}_{\mathcal{F},\mathrm{loc}}^u(x_1)$ and $V_{x_1}$ is a 
$o((y_1-x_1)^2)$, noting that $\rho=O(1)$ on the quadrilateral. Since the argument is a local one, \eqref{lim holonomy} guarantees that $y_1-x_1=O(y_0-x_0)$. In the following, we will always assume that $y_0-x_0 \leq x_1-x_0$, so that $o\big((y_1-x_1)^2\big)=o\big((y_0-x_0)(x_1-x_0)\big)$. Therefore, we obtain
\begin{align*}
\mathrm{Area}(\mathscr{Q}(y_0,x_1))&=\int_{x_0}^{x_1} \gamma_{y_0}^s(\xi)\big( \rho(\xi,0)+O(y_0-x_0) \big) \, d\xi +o\big((y_0-x_0) (x_1-x_0)\big)\\
&=\int_{x_0}^{x_1}(C(\xi)(y_0-x_0) +o(y_0-x_0))\big(\rho(\xi,0)+O(y_0-x_0)\big)\, d\xi \\
&\quad +o\big((y_0-x_0) (x_1-x_0)\big)\\
&=(y_0-x_0) \int_{x_0}^{x_1}\big(C(\xi)\rho(\xi,0) +o(1)\big)\, d\xi +o\big((y_0-x_0) (x_1-x_0)\big)\\
&=(y_0-x_0) (x_1-x_0)[\rho(x_0)+o(1)],
\end{align*}
since $C(\xi)=C(x_0)+o(1)=1+o(1)$, when $\xi \to x_0$. Observe now that \eqref{quotient des aires} follows immediately by taking the quotient.
\end{proof}

For $i=1,2$, let $\Phi_i=(\Phi_i^t)_{t \in \R}$ be a $\mathcal{C}^k$ Axiom A flow defined on a smooth $3$-manifold $M_i$. Let $\Lambda_i$ be a basic set for $\Phi_i$, and let $\alpha_i$ be a smooth contact form adapted to $\Lambda_i$. Assume that there exists a flow conjugacy $\Psi\colon \Lambda_1 \to  \Lambda_2$ between $\Phi_1|_{\Lambda_1}$ and $\Phi_2|_{\Lambda_2}$. For any point $x_0 \in \Lambda_1$, and for $*=s,u$, without loss of generality, because of Lemma \ref{lemma noyau ker} and up to translating along the flow direction, we can assume that $\mathcal{W}_{\Phi_1,\mathrm{loc}}^*(x_0)$, resp.  $\mathcal{W}_{\Phi_2,\mathrm{loc}}^*(\Psi(x_0))$ belongs to some rectangle $R^{(1)}$ of a Markov family for $\Phi_1$, resp. to some rectangle $R^{(2)}$ of a Markov family for $\Phi_2$, so that $\mathcal{W}_{\Phi_1,\mathrm{loc}}^*(x_0)=\mathcal{W}_{R^{(1)}}^*(x_0)$, and $\mathcal{W}_{\Phi_2,\mathrm{loc}}^*(\Psi(x_0))=\mathcal{W}_{R^{(2)}}^*(\Psi(x_0))$.  Moreover, by using some chart as above,   we see $\Psi|_{\mathcal{W}_{\Phi_1,\mathrm{loc}}^*(x_0)}$ as a map from $S_1 \subset \R$ to $S_2 \subset \R$, with $x_0 \simeq 0 \simeq \Psi(x_0)$.  
\begin{prop}\label{differnti}
Assume that the flow conjugacy $\Psi$ is iso-length-spectral. Then, for any point $x_0 \in \Lambda_1$, and for $*=s,u$, the following limit exists:
	$$
	\partial_* \Psi(x_0):=\lim_{\mathcal{W}_{\Phi_1,\mathrm{loc}}^*(x_0)\cap \Lambda\ni x_1 \to x_0} \frac{\Psi(x_1)-\Psi(x_0)}{x_1-x_0}.
	$$
Moreover, the associated map $\partial_* \Psi$ is H\"older continuous on $\Lambda_1$. In other words, for some $\beta \in (0,1)$, the conjugacy $\Psi$ is $\mathcal{C}^{1,\beta}$ along $\mathcal{W}_{\Phi_1,\mathrm{loc}}^s,\mathcal{W}_{\Phi_1,\mathrm{loc}}^u$ in the sense of Whitney. 
\end{prop}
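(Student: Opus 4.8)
The plan is to feed the area identity of Corollary~\ref{egalite aires} into the local asymptotics \eqref{approximation aire}--\eqref{quotient des aires}. Fix $x_0\in\Lambda_1$; I treat $*=s$, the case $*=u$ being symmetric. Working in charts $\mathcal{R}_1$ near $x_0$ and $\mathcal{R}_2$ near $\Psi(x_0)$ as above, $\mathcal{W}_{\Phi_1,\mathrm{loc}}^s(x_0)$ and $\mathcal{W}_{\Phi_2,\mathrm{loc}}^s(\Psi(x_0))$ become intervals around $0$ with $x_0\simeq 0\simeq\Psi(x_0)$, and I set $g(x):=\frac{\Psi(x)-\Psi(x_0)}{x-x_0}$ for $x\in\Lambda_1\cap\mathcal{W}_{\Phi_1,\mathrm{loc}}^s(x_0)$, $x\neq x_0$. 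The one extra fact I use about $\Psi$ is that, being a genuine flow conjugacy, it maps the stable, unstable, weak-stable and weak-unstable laminations of $\Phi_1$ to those of $\Phi_2$; hence, for $y_0\in\Lambda_1\cap\mathcal{W}_{\Phi_1,\mathrm{loc}}^u(x_0)$ and $x_1\in\Lambda_1\cap\mathcal{W}_{\Phi_1,\mathrm{loc}}^s(x_0)$ near $x_0$, the quadrilateral $\mathscr{Q}_1(y_0,x_1)=(x_0,x_1,[y_0,x_1],y_0)$ is sent by $\Psi$ to a quadrilateral $\mathscr{Q}_2(\Psi(y_0),\Psi(x_1))$ of the same type based at $\Psi(x_0)$, with $\mathrm{Area}(\mathscr{Q}_1(y_0,x_1))=\mathrm{Area}(\mathscr{Q}_2(\Psi(y_0),\Psi(x_1)))$ by Corollary~\ref{egalite aires}.

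\emph{Existence of the limit.} For $x_1,x_2\in\Lambda_1\cap\mathcal{W}_{\Phi_1,\mathrm{loc}}^s(x_0)$ close to $x_0$, and $y_0$ chosen with $\Psi(y_0)$ correspondingly small, applying \eqref{quotient des aires} both in $\Lambda_1$ and in $\Lambda_2$ and using the area identity gives
\begin{align*}
\frac{\Psi(x_2)-\Psi(x_0)}{\Psi(x_1)-\Psi(x_0)}+o(1)
&=\frac{\mathrm{Area}(\mathscr{Q}_2(\Psi(y_0),\Psi(x_2)))}{\mathrm{Area}(\mathscr{Q}_2(\Psi(y_0),\Psi(x_1)))}
=\frac{\mathrm{Area}(\mathscr{Q}_1(y_0,x_2))}{\mathrm{Area}(\mathscr{Q}_1(y_0,x_1))}\\
&=\frac{x_2-x_0}{x_1-x_0}+o(1),
\end{align*}
i.e.\ $g(x_2)/g(x_1)=1+o(1)$ as $x_1,x_2\to x_0$. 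Since $\Psi$ restricted to the leaf is a monotone homeomorphism, $g$ has constant sign, and this relation forces $g$ to stay bounded away from $0$ and $\infty$ near $x_0$; then $g(x_2)-g(x_1)=g(x_1)\big(g(x_2)/g(x_1)-1\big)\to 0$, so $g$ is Cauchy and $\partial_s\Psi(x_0):=\lim_{x\to x_0}g(x)$ exists and is $\neq 0$ (likewise $\partial_u\Psi(x_0)$). Letting $x_1,y_0\to x_0$ in \eqref{approximation aire}, in both manifolds, moreover yields $\partial_s\Psi(x_0)\,\partial_u\Psi(x_0)=\rho_1(x_0)/\rho_2(\Psi(x_0))$, where $\rho_i$ is the density of $(\mathcal{R}_i)_*(d\alpha_i)$.

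\emph{Uniform rate and H\"older continuity.} Next I would make the convergence quantitative. The error term in \eqref{quotient des aires} comes from $C_{x_0}(\xi)-1$, from $\rho_1(\xi,0)-\rho_1(x_0,0)$, and from the ``missing triangle''; using that by Remark~\ref{remarque reg} the stable/unstable holonomies are $\mathcal{C}^{1,\beta}$ and vary continuously in $\mathcal{C}^{1,\beta}$ (so $|C_{x_0}(\xi)-1|=O(d(\xi,x_0)^\beta)$ with a constant uniform in $x_0$), while $\rho_1$ is smooth and the triangle error is $O(d(y_0,x_0)^2)$, one upgrades the above to $g(x_2)/g(x_1)=1+O(r^\beta)$ with $r=\max(d(x_1,x_0),d(x_2,x_0))$ and uniform constant. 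Evaluating $g$ at a point of the stable Cantor set at a fixed small scale (such a point exists by the bounded geometry of these one-dimensional hyperbolic Cantor sets) and using that $\Psi$ is bi-H\"older with uniform constants then shows $\partial_s\Psi,\partial_u\Psi$ are bounded above and away from $0$ uniformly on $\Lambda_1$, whence $|\Psi(x_1)-\Psi(x_0)-\partial_s\Psi(x_0)(x_1-x_0)|\le C\,d(x_1,x_0)^{1+\beta}$ — this is the $\mathcal{C}^{1,\beta}$-in-Whitney statement along $\mathcal{W}_{\Phi_1,\mathrm{loc}}^s$ (and symmetrically along $\mathcal{W}_{\Phi_1,\mathrm{loc}}^u$). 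For H\"older continuity of $\partial_s\Psi$ on $\Lambda_1$: if $x_0,x_0'$ lie on a common stable leaf, applying the last estimate to $(x_0,x_0')$ and to $(x_0',x_0)$ and subtracting gives $|\partial_s\Psi(x_0)-\partial_s\Psi(x_0')|\lesssim d(x_0,x_0')^\beta$; for general nearby $x_0,x_0'$, set $w:=[x_0,x_0']\in\Lambda_1$, treat $x_0$ and $w$ by the previous case, and relate $\partial_s\Psi(w)$ and $\partial_s\Psi(x_0')$ by differentiating along the stable direction the identity $\Psi\circ\pi=\pi'\circ\Psi$, where $\pi,\pi'$ are the unstable holonomies between the stable leaves of $w$ and $x_0'$ and of their images; since these $\mathcal{C}^{1,\beta}$ holonomies are $\mathcal{C}^1$-close to the identity at a rate that is a positive power of the distance between the leaves, $|\partial_s\Psi(w)-\partial_s\Psi(x_0')|\lesssim d(x_0,x_0')^\gamma$, and shrinking $\beta$ finishes.

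The pointwise existence of $\partial_*\Psi(x_0)$ is essentially Otal's argument \cite{Otal}. The main obstacle is making everything uniform in the base point: controlling $C_{x_0}$, the density functions and the triangle corrections with constants independent of $x_0$ — which is precisely where the $\mathcal{C}^{1,\beta}$ regularity and continuous variation of the one-dimensional stable/unstable laminations (Remark~\ref{remarque reg}, i.e.\ Pinto--Rand) enter — together with the uniform a priori bounds on $\partial_*\Psi$ and the transport of the derivative between distinct leaves via the intertwined holonomies. Obtaining $\partial_*\Psi$ as a H\"older function on $\Lambda_1$, rather than just pointwise, is the delicate new point.
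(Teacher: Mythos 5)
Your proof takes essentially the same route as the paper: the existence of $\partial_*\Psi(x_0)$ is derived from the quadrilateral-area identity of Corollary~\ref{egalite aires} combined with \eqref{quotient des aires}, yielding the relation $g(x_2)/g(x_1)=1+o(1)$, and then a Cauchy argument. You are a bit more explicit than the paper on the boundedness step (the paper simply asserts that $(q(u_n))_n$ is bounded), and your aside that $\partial_s\Psi(x_0)\,\partial_u\Psi(x_0)=\rho_1(x_0)/\rho_2(\Psi(x_0))$ is a pleasant byproduct that the paper does not record. For the Hölder estimate along a stable leaf your path is also the same in spirit — quantify the error via $C_{x_0}(\cdot)$ and the density $\rho$ — though the paper implements it slightly differently (it directly compares the areas of quadrilaterals based at $x_0$ and at $x_0'$ using \eqref{approximation aire}, rather than first deriving a uniform Whitney bound and then subtracting two instances of it), and it sources the exponent $\beta$ from the Hölder continuity of $\Psi$ rather than from the $\mathcal{C}^{1,\beta}$ dependence of the holonomies; both are legitimate.

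The one genuine divergence: you additionally argue Hölder continuity of $\partial_s\Psi$ across \emph{different} stable leaves, via the conjugation relation $\Psi\circ\pi=\pi'\circ\Psi$ for unstable holonomies and the Pinto--Rand regularity. This step is plausible but not needed for the proposition as stated: the conclusion, as the final sentence makes precise (``In other words\dots''), is only that $\Psi$ is $\mathcal{C}^{1,\beta}$ in Whitney sense \emph{along} each of $\mathcal{W}^s$ and $\mathcal{W}^u$, i.e.\ for pairs on a common leaf. The transverse regularity is obtained later in Corollary~\ref{cons res} by applying the Nicol--T\"or\"ok version of Journ\'e's lemma (Theorem~\ref{journee}). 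So the extra holonomy argument, while a reasonable alternative to Journ\'e on this one-dimensional picture, is superfluous here; if you keep it, it should be stated and checked with the same care as the leafwise case (in particular the uniformity of the $\mathcal{C}^1$-closeness of $\pi,\pi'$ to the identity in terms of the transverse distance).
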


\begin{proof}
	Let us consider the case where $*=s$; the other case is analogous. Fix $x_0 \in \Lambda_1$. 
	Take $y_0\in \mathcal{W}_{\Phi_1,\mathrm{loc}}^u(x_0)\cap \Lambda_1$, $x_1,x_2\in \mathcal{W}_{\Phi_1,\mathrm{loc}}^s(x_0)\cap \Lambda_1$ close to $x_0$. Without loss of generality, we assume that $d(x_0,x_1)\leq d(x_0,x_2)$. By Corollary \ref{egalite aires}, for $i=1,2$, the quadrilaterals $\mathscr{Q}(y_0,x_i)=(x_0,x_i,z_i,y_0)\in \Lambda_1^4$ and $\Psi(\mathscr{Q})(y_0,x_i):=(\Psi(x_0),\Psi(x_i),\Psi(z_i),\Psi(y_0))\in \Lambda_2^4$  have the same area; hence, 
	$$
	\frac{\mathrm{Area}(\mathscr{Q}(y_0,x_2))}{\mathrm{Area}(\mathscr{Q}(y_0,x_1))}=\frac{\mathrm{Area}(\Psi(\mathscr{Q})(y_0,x_2))}{\mathrm{Area}(\Psi(\mathscr{Q})(y_0,x_1))}.
	$$
	We deduce from formula \eqref{quotient des aires} that 
	$$
	\frac{\Psi(x_2)-\Psi(x_0)}{x_2-x_0}=\frac{\Psi(x_1)-\Psi(x_0)}{x_1-x_0}+o\left(\frac{\Psi(x_1)-\Psi(x_0)}{x_2-x_0}\right).
	$$
	For any $x\in \mathcal{W}_{\Phi_1,\mathrm{loc}}^s(x_0)\cap \Lambda_1$ close to $x_0$, we denote 
	$q(x):=\frac{\Psi(x)-\Psi(x_0)}{x-x_0}$. Recall that $d(x_0,x_1)\leq d(x_0,x_2)$; 
	thus, the previous identity can be written as 
	$$
	q(x_1)=q(x_2)+o\big(\max(q(x_1),q(x_2))\big).
	$$
	Now, let us fix a sequence of points $(u_n)_{n \in \N}\in (\mathcal{W}_{\Phi_1,\mathrm{loc}}^s(x_0)\cap \Lambda_1)^{\N}$ going to $x_0$ as $n \to +\infty$. It is easy to see that that $(q(u_n))_{n \in \N}$ is bounded. Consequently, for any $n \geq 0$, $p \geq 0$, the previous identity gives
	$$
	q(u_{n+p})-q(u_n)=o(1). 
	$$
	We deduce that $(q(u_n))_{n \in \N}$ is a Cauchy sequence, hence it converges to some limit $\ell\in \R$. Therefore, for any sequence $(v_n)_{n \in \N}\in (\mathcal{W}_{\Phi_1,\mathrm{loc}}^s(x_0)\cap \Lambda_1)^{\N}$ converging to $x_0$, it holds that $q(v_n) \to \ell$ as $n \to +\infty$. This shows that $\Psi$ is differentiable at $x_0$  along $\mathcal{W}_{\Phi_1,\mathrm{loc}}^s(x_0)$, thus at any point in $\Lambda_1$,  along $\mathcal{W}_{\Phi_1,\mathrm{loc}}^s$.  
	
	In order to show that the map $\partial_s \Psi$ is H\"older continuous on $\Lambda_1$ along $\mathcal{W}_{\Phi_1,\mathrm{loc}}^s$, we argue as follows. Fix $x_0 \in \Lambda_1\cap R^{(1)}$, and let $x_0'\in \mathcal{W}_{R^{(1)}}^s(x_0)\cap \Lambda_1$ be close to $x_0$. Let $(u_n)_{n \in \N}\in (\mathcal{W}_{R^{(1)}}^s(x_0)\cap \Lambda_1)^{\N}$, resp. $(u_n')_{n \in \N}\in (\mathcal{W}_{R^{(1)}}^s(x_0')\cap \Lambda_1)^{\N}$, be a sequence of points in $\Lambda_1$ converging to $x_0$, resp. $x_0'$ along $\mathcal{W}_{R^{(1)}}^s(x_0)=\mathcal{W}_{R^{(1)}}^s(x_0')$.  For any point $y_0\in \mathcal{W}_{R^{(1)}}^u(x_0)\cap \Lambda_1$ close to $x_0$, and for each integer $n \in \N$, we let $\overline{\mathscr{Q}}_n(y_0)=(x_0,u_n,z_n,y_0)\in (\Lambda_1\cap R^{(1)})^4$ and $\overline{\mathscr{Q}}_n'(y_0)=(x_0',u_n',z_n',y_0')\in (\Lambda_1\cap R^{(1)})^4$, where $z_n=[y_0,u_n]_{R^{(1)}}$, $y_0'=[y_0,x_0']_{R^{(1)}}$ and $z_n'=[y_0',u_n']_{R^{(1)}}$. Let $\mathscr{Q}_n(y_0)$, resp. $\mathscr{Q}'_n(y_0)$, be the lift of $\overline{\mathscr{Q}}_n(y_0)$, resp. $\overline{\mathscr{Q}}_n'(y_0)$, as in the proof of Lemma \ref{lemma approx}. We deduce from \eqref{approximation aire} that 
	\begin{align*}
	\mathrm{Area}(\mathscr{Q}_n(y_0))&=(y_0-x_0) (u_n-x_0)[\rho(x_0)+o(1)],\\
	\mathrm{Area}(\mathscr{Q}_n'(y_0))&=(y_0'-x_0') (u_n'-x_0')[\rho(x_0')+o(1)] \\
	&=C_{x_0}(x_0')(y_0-x_0)(u_n'-x_0')[\rho(x_0')+o(1)],
	\end{align*}
	so that 
	\begin{equation*}
	\frac{\mathrm{Area}(\mathscr{Q}_n'(y_0))}{\mathrm{Area}(\mathscr{Q}_n(y_0))}=C_{x_0}(x_0')\frac{u_n'-x_0'}{u_n-x_0}\left(1+O(x_0'-x_0)+o(1)\right).
	\end{equation*} 
	As the images of the quadrilaterals $\mathscr{Q}_n(y_0)$ and $\mathscr{Q}_n'(y_0)$ by $\Psi$ have the same area, we deduce that 
	\begin{align*}
	&C_{\Psi(x_0)}(\Psi(x_0'))\frac{\Psi(u_n')-\Psi(x_0')}{\Psi(u_n)-\Psi(x_0)}\left( 1+O(\Psi(x_0')-\Psi(x_0))+o(1)\right)\\
	&=C_{x_0}(x_0')\frac{u_n'-x_0'}{u_n-x_0}\left( 1+O(x_0'-x_0)+o(1)\right).
	\end{align*}
	Observe that \begin{align*}
	C_{x_0}(x_0')&=1+O(x_0'-x_0), \\ C_{\Psi(x_0)}(\Psi(x_0'))&=1+O(\Psi(x_0')-\Psi(x_0))=1+O(\vert x_0'-x_0\vert^{\beta}),
	\end{align*}
	for some $\beta\in(0,1)$ since $\Psi$ is H\"older continuous. Thus, for $y_0\rightarrow x_0$ we obtain
	\begin{equation*}
	\frac{\Psi(u_n')-\Psi(x_0')}{u_n'-x_0'}=\frac{\Psi(u_n)-\Psi(x_0)}{u_n-x_0}\left( 1+O(|x_0'-x_0|^\beta)\right).
	\end{equation*}
	Letting $n \to +\infty$, we deduce that $|\partial_s \Psi(x_0')-\partial_s \Psi(x_0)|=O(|x_0'-x_0|^\beta)$. Thus, applying Whitney's theorem, we conclude that $\Psi$ is $\mathcal{C}^{1,\beta}$ in the sense of Whitney along $\mathcal{W}_{\Phi_1,\mathrm{loc}}^s$, for $\beta \in(0,1)$. 
\end{proof}

Recall that roughly speaking, Journ\'e's lemma (see \cite{Jour}) says that once a function is regular along the leaves of two transverse foliations, then it is regular globally. It has been generalized by Nicol-T\"or\"ok \cite{Nictamere} in the case of laminations on Cantor sets (see Theorem 1.5 and Remark 1.6 in \cite{Nictamere}). In our case, it reads as follows.

\begin{theorem}[Theorem 1.5 in \cite{Nictamere}]\label{journee}
	Let $\Lambda\subset \R^2$ be a closed, hyperbolic basic set, and for $\beta \in (0,1)$, let  $\mathcal{W}^s$, $\mathcal{W}^u$ be two transverse uniformly $\mathcal{C}^{1,\beta}$ laminations of $\Lambda$. Suppose that $\Theta\colon \Lambda \to \R^2$ is uniformly $\mathcal{C}^{1,\beta}$ in the sense of Whitney when restricted to the leaves of $\mathcal{W}^s$, $\mathcal{W}^u$. Then $\Theta$ is $\mathcal{C}^{1,\beta}$ in the sense of Whitney on $\Lambda$. 
\end{theorem}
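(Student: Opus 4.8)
The plan is to follow the strategy of Journé's regularity lemma \cite{Jour}, in the Cantor-set incarnation of Nicol--T\"or\"ok. The first step is to manufacture a candidate derivative. At each $x\in\Lambda$ the hypothesis provides the Whitney derivatives of $\Pi$ restricted to $\Lambda\cap\mathcal{W}^s(x)$ and to $\Lambda\cap\mathcal{W}^u(x)$; write them $\partial_s\Pi(x)$ and $\partial_u\Pi(x)$, regarded as linear maps on $E^s(x):=T_x\mathcal{W}^s(x)$ and $E^u(x):=T_x\mathcal{W}^u(x)$ respectively. Since $\mathcal{W}^s,\mathcal{W}^u$ are transverse and one-dimensional, $E^s(x)\oplus E^u(x)=\R^2$, so these two leafwise derivatives glue to a single linear map $D\Pi(x)\colon\R^2\to\R^2$. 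Uniformity of the laminations and of the leafwise $C^{1,\beta}$ data, together with the continuity of $x\mapsto E^{s}(x),E^u(x)$ on the hyperbolic set $\Lambda$, shows that $D\Pi$ is continuous, hence bounded, on the compact set $\Lambda$.

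The second step reduces the $\mathcal{C}^{1,\beta}$ Whitney property on $\Lambda$ to $\beta$-H\"older continuity of $D\Pi$ on $\Lambda$, by exploiting the local product structure. Given $x,y\in\Lambda$ nearby, set $z:=[x,y]=\mathcal{W}^s_{\mathrm{loc}}(x)\cap\mathcal{W}^u_{\mathrm{loc}}(y)\in\Lambda$; uniform transversality gives $d(x,z)+d(z,y)\le C\,d(x,y)$. Applying the leafwise Taylor expansion along $\mathcal{W}^s(x)$ at $x$ and along $\mathcal{W}^u(y)$ at $z$ — and using that the secant vectors $z-x$ and $y-z$ agree with their tangent directions $E^s(x)$, $E^u(z)$ up to $O(d(x,y)^{1+\beta})$ because the leaves are $\mathcal{C}^{1,\beta}$, while $D\Pi$ is bounded — one gets
\begin{align*}
\Pi(z)-\Pi(x)&=D\Pi(x)(z-x)+O(d(x,y)^{1+\beta}),\\
\Pi(y)-\Pi(z)&=D\Pi(z)(y-z)+O(d(x,y)^{1+\beta}).
\end{align*}
Writing $D\Pi(z)(y-z)=D\Pi(x)(y-z)+\big(D\Pi(z)-D\Pi(x)\big)(y-z)$ and telescoping $\Pi(y)-\Pi(x)$ through $z$, the cross term is $O\big(\|D\Pi(z)-D\Pi(x)\|\,d(x,y)\big)$; so if $D\Pi$ is $\beta$-H\"older this is $O(d(x,y)^{1+\beta})$ and one obtains $\Pi(y)-\Pi(x)=D\Pi(x)(y-x)+O(d(x,y)^{1+\beta})$, i.e.\ $\Pi$ is $\mathcal{C}^{1,\beta}$ in Whitney sense, and the Whitney extension theorem recalled above concludes. (Feeding merely the \emph{continuity} of $D\Pi$ into the same computation already shows $\Pi\in\mathcal{C}^{1}$ in Whitney sense, a useful preliminary.)

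The crux, and the main obstacle, is therefore the $\beta$-H\"older continuity of $D\Pi$ on $\Lambda$. Along stable leaves $\partial_s\Pi$ is $\beta$-H\"older by hypothesis, and along unstable leaves $\partial_u\Pi$ is $\beta$-H\"older; and for \emph{scalar} functions, ``H\"older along both transverse laminations $\Rightarrow$ H\"older globally'' is elementary via the product-structure decomposition (no transport of derivatives is involved). The missing ingredient is the \emph{transverse} regularity: that $\partial_u\Pi$ is $\beta$-H\"older also along stable leaves, and symmetrically $\partial_s\Pi$ along unstable leaves. This is exactly where I would run the Journé bootstrapping: fix $x$ and $x'$ on a common stable leaf and connect them by a ``staircase'' of points of $\Lambda$ obtained by alternately moving along short unstable and stable segments of geometrically decreasing length; estimate each elementary move by the leafwise $\mathcal{C}^{1,\beta}$ Taylor formula, and control the mismatch between consecutive unstable leaves using that the stable holonomies are themselves $\mathcal{C}^{1,\beta}$ (Remark \ref{remarque reg}, applicable here since the leaves are one-dimensional and $\Lambda$ has local product structure). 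Summing the resulting geometric series yields $\|\partial_u\Pi(x')-\partial_u\Pi(x)\|=O(d(x,x')^{\beta})$, hence $D\Pi\in\mathcal{C}^{\beta}(\Lambda)$, and the theorem follows. The delicate part of this last step is the bookkeeping of the two independent scales — the ``stable size'' and the ``unstable size'' of the approximating boxes — so that all error terms genuinely sum to the right power of $d(x,x')$; this is the technical heart of Journé's lemma and of its Cantor-set refinement.
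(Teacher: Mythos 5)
The paper itself does not prove this statement: it is cited verbatim as Theorem~1.5 of Nicol--T\"or\"ok \cite{Nictamere}, so there is no in-paper proof to compare against. Judged on its own terms, your Steps 1--2 correctly reduce the problem: gluing the leafwise derivatives into a candidate $D\Pi$ via the transverse splitting $E^s\oplus E^u=\R^2$, and telescoping $\Pi(y)-\Pi(x)$ through $z=[x,y]$, does show that $\beta$-H\"older continuity of $D\Pi$ on $\Lambda$ yields the $\mathcal{C}^{1,\beta}$ Whitney estimate; and the scalar remark reduces that H\"older continuity to the two cross-regularity statements ($\partial_u\Pi$ H\"older along stable leaves, and symmetrically).

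But that cross-regularity \emph{is} the theorem --- it is the whole content of the Journ\'e-type lemma --- and your proposal only gestures at it. The ``staircase'' picture in Step 3 is also slightly off: the points $x,x'$ already lie on a common stable leaf, so there is no staircase to build between them; what one actually needs is to compare the unstable Taylor increments at $x$ and at $x'$ across a thin product box inside $\Lambda$, using the $\mathcal{C}^{1,\beta}$ regularity of the stable holonomy to bound the distortion by $O(d(x,x')^{\beta})$ and then letting the transverse size of the box tend to zero. You have identified the right ingredients, but the actual estimate showing that the accumulated errors are $O(d(x,x')^{\beta})$ --- the part you yourself flag as the ``technical heart'' --- is not written. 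As it stands this is a correct plan with a genuine gap at exactly the step that carries the mathematical content.
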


From Proposition \ref{differnti} and Theorem \ref{journee}, we then deduce the following
\begin{corollary}\label{cons res}
	 Assume that there exists an iso-length-spectral flow conjugacy $\Psi\colon \Lambda_1 \to  \Lambda_2$ between $\Phi_1|_{\Lambda_1}$ and $\Phi_2|_{\Lambda_2}$. Then $\Psi$ is $\mathcal{C}^{1,\beta}$ in the sense of Whitney on $\Lambda$, for some $\beta \in (0,1)$. 
\end{corollary}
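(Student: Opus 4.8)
The statement is essentially a gluing result, so the plan is to feed the leafwise regularity of Proposition \ref{differnti} into the Journ\'e-type Theorem \ref{journee}. Note first that the iso-length-spectrality hypothesis on $\Psi$ is exactly the one required by Proposition \ref{differnti}, so it transfers directly.

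First I would reduce to a cross-section and flatten the laminations. Fix $x_0\in\Lambda_1$; after translating along the flow, we may assume (as in the discussion preceding Proposition \ref{differnti}) that $\mathcal{W}_{\Phi_1,\mathrm{loc}}^s(x_0)$ and $\mathcal{W}_{\Phi_1,\mathrm{loc}}^u(x_0)$ lie in a single rectangle $R^{(1)}$ of a Markov family for $\Phi_1$, and likewise that $\mathcal{W}_{\Phi_2,\mathrm{loc}}^{s}(\Psi(x_0))$ and $\mathcal{W}_{\Phi_2,\mathrm{loc}}^{u}(\Psi(x_0))$ lie in a rectangle $R^{(2)}$ for $\Phi_2$. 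Using charts that send the local stable and unstable leaves to the coordinate axes of $\R^2$, the restriction of $\Psi$ to the trace $\Lambda_1\cap R^{(1)}$ reads, near $x_0$, as a map $\Pi$ between closed subsets of $\R^2$ carrying the two transverse laminations $\mathcal{W}_{\mathcal{F}_1}^s$, $\mathcal{W}_{\mathcal{F}_1}^u$.

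Second I would verify the hypotheses of Theorem \ref{journee} for $\Pi$. By Remark \ref{remarque reg} (Pinto--Rand), since the stable and unstable leaves are one-dimensional and $\Lambda_1$ has local product structure, the laminations $\mathcal{W}_{\mathcal{F}_1}^{s}$, $\mathcal{W}_{\mathcal{F}_1}^{u}$ are transverse and uniformly $\mathcal{C}^{1,\beta}$ for some $\beta\in(0,1)$, and the flattening charts (being as smooth as the dynamics) preserve this. By Proposition \ref{differnti}, for $*=s,u$ the derivative $\partial_*\Psi$ exists on $\Lambda_1$ and is $\beta$-H\"older along $\mathcal{W}_{\Phi_1,\mathrm{loc}}^{*}$, with the H\"older bound uniform in the base point; hence $\Pi$ is uniformly $\mathcal{C}^{1,\beta}$ in the Whitney sense along the two transverse laminations. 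Theorem \ref{journee} then gives that $\Pi$, and therefore $\Psi$ restricted to the cross-section, is $\mathcal{C}^{1,\beta}$ in the Whitney sense near $x_0$; since the Markov family is finite and the transition charts are smooth, this holds globally on the cross-section $\Lambda_1\cap\mathcal{S}$.

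Finally I would add back the flow direction. Being a flow conjugacy of unit speed, $\Psi$ satisfies $L_{X_1}\Psi=X_2\circ\Psi$, and $x\mapsto X_2(\Psi(x))$ is H\"older (composition of the $\mathcal{C}^{k-1}$ vector field $X_2$ with the H\"older map $\Psi$), so $\Psi$ is also $\mathcal{C}^{1,\beta}$ in the Whitney sense along the smooth central lamination $\mathcal{W}_{\Phi_1}^{c}$. Applying the Journ\'e-type gluing once more to the transverse pair consisting of $\mathcal{W}_{\Phi_1}^{c}$ and the $\mathcal{C}^{1,\beta}$ cross-sectional lamination, we conclude that $\Psi$ is $\mathcal{C}^{1,\beta}$ in the Whitney sense on the three-dimensional set $\Lambda_1$. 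The only delicate point I anticipate is checking that the regularity furnished by Proposition \ref{differnti} is genuinely \emph{uniform} over $\Lambda_1$ --- which is precisely what the estimate $|\partial_*\Psi(x_0')-\partial_*\Psi(x_0)|=O(|x_0'-x_0|^{\beta})$ at the end of that proof supplies --- together with the compatibility of the flattening charts with the transversality and uniformity required in Theorem \ref{journee}; once these are in place the corollary follows at once.
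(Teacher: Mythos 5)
Your proof matches the paper's argument for the cross-sectional part: both reduce to a Markov cross-section, invoke Remark \ref{remarque reg} (Pinto--Rand) for the $\mathcal{C}^{1,\beta}$ regularity and transversality of the one-dimensional stable/unstable laminations, feed the leafwise conclusion of Proposition \ref{differnti} into Theorem \ref{journee}, and conclude that the induced map $\widetilde{\Psi}$ on the trace $\Lambda_1\cap\mathcal{S}^{(1)}$ is $\mathcal{C}^{1,\beta}$ in Whitney sense. Your remark that the key delicate point is the uniformity of the H\"older estimate on $\partial_*\Psi$, supplied precisely by the final estimate in the proof of Proposition \ref{differnti}, is also exactly what makes the hypotheses of Theorem \ref{journee} verifiable.

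Where you diverge from the paper is in reinstating the flow direction. You propose a second application of a Journ\'e-type gluing lemma, this time to the transverse pair consisting of the one-dimensional central lamination $\mathcal{W}_{\Phi_1}^{c}$ and the two-dimensional cross-sectional lamination. That idea would work, but note that Theorem \ref{journee} as stated is formulated only for $\Lambda\subset\R^2$ with two one-dimensional laminations; to apply it here you would need to invoke a higher-dimensional version (which Nicol--T\"or\"ok do provide, but which the paper does not record in the statement). The paper instead uses a softer and more direct route: since the flow conjugacy can be written locally as $\Psi=\Phi_2^{t(\cdot)}\circ\widetilde{\Psi}\circ\pi_1$, where $\pi_1$ is the projection along flow lines onto the cross-section $\mathcal{S}^{(1)}$ and $t(\cdot)$ is the corresponding (smooth) return time, and since the flow-line projection and the flow maps are $\mathcal{C}^k$, the Whitney $\mathcal{C}^{1,\beta}$ regularity of $\widetilde{\Psi}$ propagates to $\Psi$ by composition. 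This avoids both the verification of transversality/uniformity for a mixed $1$D--$2$D lamination pair and the need to cite a generalization of the gluing lemma. Your step showing $\Psi$ is $\mathcal{C}^{1,\beta}$ along flow lines (via $L_{X_1}\Psi=X_2\circ\Psi$) is correct and also implicit in the paper's argument, but the paper simply does not need a second gluing step. Both roads lead to the same conclusion; the paper's is shorter and stays within the letter of the cited theorem.
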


\begin{proof}
	By Proposition \ref{differnti}, we know that $\Psi$ is $\mathcal{C}^{1,\beta}$ in the sense of Whitney along stable/unstable leaves. For $i=1,2$, let us fix a Markov family $\mathcal{R}^{(i)}=\{R_1^{(i)},\dots,R_{m(i)}^{(i)}\}$ with a cross-section $\mathcal{S}^{(i)}$ as given by Theorem \ref{Chernov Markov}. By projecting $\Lambda_1,\Lambda_2$ along flow lines on $\mathcal{S}^{(1)},\mathcal{S}^{(2)}$, and applying Theorem \ref{journee} to the projected sets, we deduce that the map $\widetilde{\Psi}$ induced by $\Psi$ between $\Lambda_1\cap \mathcal{S}^{(1)}$ and $\Lambda_2\cap \mathcal{S}^{(2)}$ is $\mathcal{C}^{1,\beta}$ in the sense of Whitney, for some $\beta \in (0,1)$.  Since the projection along the flow direction is $\mathcal{C}^k$, and since we can describe $\Psi$ in terms of $\widetilde{\Psi}$ and the two projections along $X_1,X_2$,  we conclude that $\Psi$ is $\mathcal{C}^{1,\beta}$ in the sense of Whitney. 
\end{proof}



\subsection{Upgraded regularity of the conjugation}\label{section augm reg}

As previously, let us fix a homeomorphism $\Psi\colon \Lambda_1 \to  \Lambda_2$ 
that is $\mathcal{C}^{1,\beta}$ in Whitney sense, for some $\beta \in (0,1)$, which satisfies 
\begin{equation}\label{equation de conjugazion}
\Psi \circ \Phi_1^t(x)=\Phi_2^{t} \circ \Psi(x),\quad  \text{for all } (x,t) \in \Lambda_1\times \R.
\end{equation}

We will show that the conjugacy map is even more regular:
\begin{prop}\label{prop amel reg}
The conjugacy map $\Psi|_{\Lambda_1}$ is $\mathcal{C}^k$ in  Whitney sense. 
\end{prop}

\begin{proof}
Recall that for $i=1,2$ and $\star=s,u$, there exists $\delta_i^{(\star)}>0$ such that for any $x \in \Lambda_i$, we have 
$$
\delta_i^{(\star)}=\dim_H(\mathcal{W}_{\Phi_i,\mathrm{loc}}^\star(x)\cap \Lambda_i).
$$
As $\Psi$ is $\mathcal{C}^{1,\beta}$, we also have $\delta_1^{(\star)}=\delta_2^{(\star)}=:\delta^{(\star)}$.  

Fix some small $\varepsilon>0$. By Theorem \ref{Chernov Markov}, for $i=1,2$, there exists a proper Markov family $\mathcal{R}^{(i)}=\{R_1^{(i)},\dots,R_{m(i)}^{(i)}\}$ for ${\Phi_i}_{|\Lambda_i}$ of size $\varepsilon$, for some integer $m(i)\geq 1$. Let $\mathcal{S}^{(i)}:=R_1^{(i)}\cup\dots\cup R_{m(i)}^{(i)}$, resp. $\mathcal{F}_i$, be the associated cross-section, resp. Poincar\'e map. We also denote by $\overline{\Lambda}_i:=\Lambda_i \cap \mathcal{S}^{(i)}$ the trace of $\Lambda_i$ on $\mathcal{S}^{(i)}$. The map $\widetilde{\Psi}$ induced by $\Psi$ between $\overline{\Lambda}_1$ and $\overline{\Lambda}_2$ is $\mathcal{C}^{1,\beta}$ in the sense of Whitney. 
Recall that $\dim_H (\overline{\Lambda}_1)=\dim_H (\overline{\Lambda}_2)=\delta^{(s)}+\delta^{(u)}$ (see \cite{Manning} for a reference).


By \cite[Theorem 22.1]{Pesinlecochino}, for $i=1,2$ and $\star=s,u$, there exists a (unique) \emph{equilibrium state}\footnote{\label{formalisme thermo}See for instance \cite{Pesinlecochino} for more details about equilibrium states, potentials, pressure etc.} $\mu_i^\star$ such that for every $x\in \overline{\Lambda}_i$, the conditional measure $m_{i,x}^\star$ of $\mu_i^\star$ on $\mathcal{W}_{\mathcal{F}_i}^\star(x) \cap \overline{\Lambda}_i$ is equivalent to the $\delta^{(\star)}$-Hausdorff measure $H^{\delta^{(\star)}}$. More precisely, $\mu_i^s$ is the equilibrium state for the \emph{potential}\footref{formalisme thermo} $p_i^{(s)}:=\delta^{(s)} \log \|D\mathcal{F}_i|_{E_{\mathcal{F}_i}^s}\|$, and $\mu_i^u$ is the equilibrium state for the potential $p_i^{(u)}:=-\delta^{(u)} \log \|D\mathcal{F}_i|_{E_{\mathcal{F}_i}^u}\|$; besides, the \emph{pressure}\footref{formalisme thermo} $P(p_i^{(\star)})$ vanishes, for $\star=s,u$.  

By \eqref{equation de conjugazion}, for any periodic point $x \in \Lambda_1$ of period $q(x) \geq 1$, the differentials $D\mathcal{F}_1^{q(x)}(x)$ and $D\mathcal{F}_2^{q(x)}(\widetilde{\Psi}(x))$ are conjugate, hence have the same eigenvalues, i.e.,
$$
\sum_{k=0}^{q(x)-1} \left(\log \|D\mathcal{F}_1^k(x)|_{E_{\mathcal{F}_1}^{(\star)}}\|-\log \|D\mathcal{F}_2^k\big(\widetilde{\Psi}(x)\big)|_{E_{\mathcal{F}_2}^{(\star)}}\|\right)=0,\quad  \star=s,u. 
$$ 
By Livsic's Theorem, we deduce that the potentials $\widetilde{\Psi}^* p_2^{(\star)}$ and $p_1^{(\star)}$ are cohomologous, and by \cite[Proposition 4.5]{Bowen75}, we thus have  
$\widetilde{\Psi}^* \mu^{(\star)}_2|_{\overline{\Lambda}_1}=\mu_1^{(\star)}|_{\overline{\Lambda}_1}$. Consequently, $\widetilde{\Psi}^* m_{2,\widetilde{\Psi}(x)}^\star=m_{1,x}^\star$, for $\star=s,u$, and for a.e. $x\in \overline{\Lambda}_1$. 

In the following we deal with the unstable case; the stable one is analogous. To ease the notation, we abbreviate $\delta:=\delta^{(u)}$. For $i=1,2$, and $x_i \in \overline{\Lambda}_i$, the conditional measure $m_{i,x_i}^u$ is equivalent to $H^\delta$, hence we can introduce the density function $\rho_{i,x_i}^u\colon \mathcal{W}_{\mathcal{F}_i}^u(x_i) \to \R^*$, so that $dm_{i,x_i}^u=\rho_{i,x_i}^u dH^\delta$. Recall that the conditional measure $m_{i,x_i}^u$ depends only on the leaf $\mathcal{W}_{\mathcal{F}_i}^u(x_i)$. Our goal in the following paragraph is to show that the function $\rho_{i,x_i}^u(\cdot)/\rho_{i,x_i}^u(x_i)$ is $\mathcal{C}^{k-1}$ in the sense of Whitney. 

As $P(p_i^u)=0$, for any integer $n \geq 0$, and for any $y_i \in \mathcal{W}_{\mathcal{F}_i}^u(x_i)$, we have (see for instance \cite[Section 3.2]{Clim})
\begin{equation}\label{eq densitye}
\frac{d((\mathcal{F}_i^{-n})_* m_{i,x_i}^u)}{dm_{i,\mathcal{F}_i^{-n}(x_i)}^u}(\mathcal{F}_i^{-n}(y_i))=e^{-S_n p_i^u(\mathcal{F}_i^{-n}(y_i))},
\end{equation}
where $S_n p_i^u$ is the $n^{\mathrm{th}}$ Birkhoff sum of $p_i^u$, i.e.,
$$
S_n p_i^u(\mathcal{F}_i^{-n}(y_i)):=\sum_{k=1}^{n} p_i^u(\mathcal{F}^{-k}(y_i))=-\sum_{k=1}^{n} \log \big\|D\mathcal{F}_i^{-1}(\mathcal{F}^{-k}(y_i))|_{E_{\mathcal{F}_i}^u}\big\|^\delta.  
$$ 
In terms of densities,  \eqref{eq densitye} thus yields:
$$
\frac{(\mathcal{F}_i^{-n})_*\rho_{i,x_i}^u}{
	\rho_{i,\mathcal{F}_i^{-n}(x_i)}^u}(\mathcal{F}_i^{-n}(y_i))=\frac{\rho_{i,x_i}^u(y_i)}{
	\rho_{i,\mathcal{F}_i^{-n}(x_i)}^u(\mathcal{F}_i^{-n}(y_i))}=\prod_{k=1}^{n} \big\|D\mathcal{F}_i^{-1}(\mathcal{F}^{-k}(y_i))|_{E_{\mathcal{F}_i}^u}\big\|^\delta. 
$$
Let us consider the ratio of the above quantity and the corresponding one at $x_i$. 
As the distance $d(\mathcal{F}^{-n}(x_i),\mathcal{F}^{-n}(y_i))$ decays exponentially fast with respect to $n$, and assuming that $\mathcal{F}^{-n}(x_i),\mathcal{F}^{-n}(y_i)$ converge to a point $x_i^\infty$ (up to taking subsequences), letting $n \to +\infty$, we obtain
\begin{equation}\label{definit rho i xi}
\rho_i^u(x_i,y_i):=\frac{\rho_{i,x_i}^u(y_i)}{\rho_{i,x_i}^u(x_i)}=\prod_{k=1}^{+\infty} \left(\frac{\|D\mathcal{F}_i^{-1}(\mathcal{F}^{-k}(y_i))|_{E_{\mathcal{F}_i}^u}\|}{\|D\mathcal{F}_i^{-1}(\mathcal{F}^{-k}(x_i))|_{E_{\mathcal{F}_i}^u}\|}\right)^{\delta}.
\end{equation}
In particular, based on that expression, and  
arguing as in \cite[Lemma 4.3]{deLL}, we deduce that the function $\rho_i^u(x_i,\cdot)$ is $\mathcal{C}^{k-1}$ in the sense of Whitney. 

In the rest of this section, we follow the proof of \cite[Lemma 4.5]{deLL}. Fix a point 
$x_1 \in \overline{\Lambda}_1$ and 
let $x_2:=\widetilde{\Psi}(x_1)\in \overline{\Lambda}_2$.  
Since the foliations $\mathcal{W}_{\mathcal{F}_1}^u,\mathcal{W}_{\mathcal{F}_2}^u$ have one dimensional leaves, we can parametrize
patches of the unstable leaves by Riemannian length. Recall that $\widetilde{\Psi}^* m_{2,x_2}^u=m_{1,x_1}^u$; we deduce that for any point $y_1 \in \mathcal{W}_{\mathcal{F}_1}^u(x_1)$, it holds 
(taking charts for $\mathcal{W}_{\mathcal{F}_1}^u(x_1),\mathcal{W}_{\mathcal{F}_2}^u(x_2)$, identifying functions on the leaves and functions of the
coordinates,  and seeing the Whitney extension of $\widetilde{\Psi}|_{\mathcal{W}_{\mathcal{F}_1}^u(x_1)}$ as a map from $\R$ to $\R$):
$$
\int_{x_1}^{y_1} \rho_{1,x_1}^u(s)\,  dH^\delta(s)=\int_{\widetilde{\Psi}(x_1)}^{\widetilde{\Psi}(y_1)} \rho_{2,\widetilde{\Psi}(x_1)}^u(s)\, dH^\delta(s). 
$$
By \eqref{definit rho i xi}, we have 
$$
\rho_{1,x_1}^u(x_1) \int_{x_1}^{y_1} \rho_{1}^u(x_1,s)\,  dH^\delta(s)=\rho_{2,x_2}^u(x_2)\int_{\widetilde{\Psi}(x_1)}^{\widetilde{\Psi}(y_1)} \rho_{2}^u(x_2,s)\, dH^\delta(s). 
$$
For $y_1$ very close to $x_1$, we thus obtain
$$
\rho_{1,x_1}^u(x_1)\int_{x_1}^{y_1}(1+o(1))\, dH^\delta(s)=\rho_{2,x_2}^u(x_2)\int_{\widetilde{\Psi}(x_1)}^{\widetilde{\Psi}(y_1)} (1+o(1))\, dH^\delta(s),
$$
that is
\begin{equation*}
\dfrac{\rho_{1,x_1}^u(x_1)}{\rho_{2,x_2}^u(x_2)}=\dfrac{\int_{\widetilde{\Psi}(x_1)}^{\widetilde{\Psi}(y_1)} \, dH^\delta(s)}{\int_{x_1}^{y_1}\, dH^\delta(s)}+o(1).
\end{equation*}
Consequently,
\begin{align*}
\log\left(\frac{\rho_{1,x_1}^u}{\rho_{2,x_2}^u\circ \widetilde{\Psi}}\right)(x_1)&=\log|\widetilde{\Psi}(y_1)-\widetilde{\Psi}(x_1)|\times \frac{\log\left|\int_{\widetilde{\Psi}(x_1)}^{\widetilde{\Psi}(y_1)}\, dH^\delta\right|}{\log|\widetilde{\Psi}(y_1)-\widetilde{\Psi}(x_1)|}-\\
&\quad -\log|y_1-x_1|\times \frac{\log\left|\int_{x_1}^{y_1}\, dH^\delta\right|}{\log|y_1-x_1|}+o(1).
\end{align*}
When $\mathcal{W}_{\mathcal{F}_1}^u(x_1)\ni y_1 \to x_1$, both $\frac{\log\left|\int_{\widetilde{\Psi}(x_1)}^{\widetilde{\Psi}(y_1)}\, dH^\delta\right|}{\log|\widetilde{\Psi}(y_1)-\widetilde{\Psi}(x_1)|}$ and $\frac{\log\left|\int_{x_1}^{y_1}\, dH^\delta\right|}{\log|y_1-x_1|}$ tend to the dimension  of the measure $H^\delta$, namely, $\delta$. We deduce that
\begin{equation*}
\log\left(\frac{\rho_{1,x_1}^u}{\rho_{2,x_2}^u\circ \widetilde{\Psi}}\right)(x_1)=\delta \log\left(\frac{\widetilde{\Psi}(y_1)-\widetilde{\Psi}(x_1)}{y_1-x_1}\right)+o(1).
\end{equation*}
As $\widetilde{\Psi}|_{\overline{\Lambda}_1}$ is $\mathcal{C}^{1,\beta}$ in the sense of Whitney, letting $\mathcal{W}_{\mathcal{F}_1}^u(x_1)\ni y_1 \to x_1$, we get
\begin{equation*}
\frac{\rho_{1,x_1}^u}{\rho_{2,x_2}^u\circ \widetilde{\Psi}}(x_1)=
(\partial_u\widetilde{\Psi}(x_1))^\delta.
\end{equation*}
In other words, on $\Lambda_1$, the map $\widetilde{\Psi}$ 
satisfies 
\begin{equation}\label{ODE}
\partial_u\widetilde{\Psi}(\cdot)=\left(\dfrac{\rho_{1,(\cdot)}^u(\cdot)}{ \rho_{2,\widetilde{\Psi}(\cdot)}^u\circ \widetilde{\Psi}(\cdot)}\right)^{\frac 1\delta}.
\end{equation}
We have seen that the functions $\rho_{1,(\cdot)}^u,\rho_{2,\widetilde{\Psi}(\cdot)}^u$ are $\mathcal{C}^{k-1}$ in Whitney sense. As $\widetilde{\Psi}$ is $\mathcal{C}^{1,\beta}$ on $\Lambda_1$ along $\mathcal{W}_{\mathcal{F}_1}^u$, the right hand side of \eqref{ODE} is $\mathcal{C}^{1,\beta}$ on $\Lambda_1$ along $\mathcal{W}_{\mathcal{F}_1}^u$.   
We deduce that $\widetilde{\Psi}$ is $\mathcal{C}^{2}$ on $\Lambda_1$ along $\mathcal{W}_{\mathcal{F}_1}^u$ in Whitney sense. By repeating the argument, we conclude that $\widetilde{\Psi}$ is $\mathcal{C}^{k}$ on $\Lambda_1$ along $\mathcal{W}_{\mathcal{F}_1}^u$ in Whitney sense.  
The same arguments applied at stable leaves imply that $\widetilde{\Psi}$ restricted to the leaves of $\mathcal{W}_{\mathcal{F}_1}^s$ is also $\mathcal{C}^k$ in Whitney sense.
By using the version of Journ\'e's Lemma in \cite[Theorem 1.5]{Nictamere} for laminations on hyperbolic sets, and arguing as in the proof of Corollary \ref{cons res}, we conclude that the conjugacy map $\Psi|_{\Lambda_1}$ is $\mathcal{C}^k$ in  Whitney sense, as desired.
\end{proof}

\subsection{Preservation of contact forms: end of the proof of Theorem \ref{thm princ dyn}}\label{sous sect contact}

We have just seen that the flow conjugacy $\Psi$ is $\mathcal{C}^{k}$ in the sense of Whitney on $\Lambda_1$. In this subsection, we show that it implies that $\Psi$ respects the contact structures. See Feldman-Ornstein \cite{FelOrn} for related results in the case of contact Anosov flows on $3$-manifolds.  
\begin{lemma}\label{lemme prer contact}
	We have
	$\Psi^* \alpha_2|_{\Lambda_1}=\alpha_1|_{\Lambda_1}$.
\end{lemma}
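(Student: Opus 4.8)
The plan is to evaluate the pullback $\Psi^{*}\alpha_{2}$ at each point $x_{0}\in\Lambda_{1}$ through the differential $D\Psi(x_{0})$, using that this differential respects the hyperbolic splitting and that both contact forms satisfy the Reeb conditions of Definition~\ref{adapted contact form}. First I would recall, from the discussion preceding the lemma (i.e.\ Corollary~\ref{cons res}), that $\Psi$ is $\mathcal{C}^{1,\beta}$ in the sense of Whitney on $\Lambda_{1}$; by Whitney's extension theorem, applied along the local product structure of the basic set, the stable, flow and unstable derivative data assemble into a genuine jet, so that at each $x_{0}\in\Lambda_{1}$ there is a well-defined linear map $D\Psi(x_{0})\colon T_{x_{0}}M_{1}\to T_{\Psi(x_{0})}M_{2}$.

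Next I would check that $D\Psi(x_{0})$ is block-diagonal for the splittings $T_{x_{0}}M_{1}=E^{s}_{\Phi_{1}}(x_{0})\oplus\R X_{1}(x_{0})\oplus E^{u}_{\Phi_{1}}(x_{0})$ and $T_{\Psi(x_{0})}M_{2}=E^{s}_{\Phi_{2}}(\Psi(x_{0}))\oplus\R X_{2}(\Psi(x_{0}))\oplus E^{u}_{\Phi_{2}}(\Psi(x_{0}))$. Differentiating the conjugacy relation $\Psi\circ\Phi_{1}^{t}=\Phi_{2}^{t}\circ\Psi$ at $t=0$ gives $D\Psi(x_{0})X_{1}(x_{0})=X_{2}(\Psi(x_{0}))$. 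Moreover, since $\Psi$ is a \emph{flow} conjugacy it commutes with the flows and preserves forward, resp.\ backward, exponential convergence of orbits, hence maps the strong stable, resp.\ strong unstable, lamination of $\Phi_{1}$ to that of $\Phi_{2}$; therefore the restriction of $D\Psi(x_{0})$ to $E^{s}_{\Phi_{1}}(x_{0})$, resp.\ $E^{u}_{\Phi_{1}}(x_{0})$, coincides with the leaf-wise derivative $\partial_{s}\Psi(x_{0})$, resp.\ $\partial_{u}\Psi(x_{0})$, produced by Proposition~\ref{differnti}, and so lands in $E^{s}_{\Phi_{2}}(\Psi(x_{0}))$, resp.\ $E^{u}_{\Phi_{2}}(\Psi(x_{0}))$. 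Since the three summands of $T_{x_{0}}M_{1}$ span, this forces the block-diagonal form.

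Given this, the identity becomes a purely linear computation. Fix $x_{0}\in\Lambda_{1}$ and write $v=v^{s}+c\,X_{1}(x_{0})+v^{u}\in T_{x_{0}}M_{1}$ with $v^{s}\in E^{s}_{\Phi_{1}}(x_{0})$, $v^{u}\in E^{u}_{\Phi_{1}}(x_{0})$, $c\in\R$. By linearity and the block-diagonal structure,
\[
\Psi^{*}\alpha_{2}(x_{0})(v)=\alpha_{2}\bigl(D\Psi(x_{0})v^{s}\bigr)+c\,\alpha_{2}\bigl(X_{2}(\Psi(x_{0}))\bigr)+\alpha_{2}\bigl(D\Psi(x_{0})v^{u}\bigr).
\]
Because $\Psi(x_{0})\in\Lambda_{2}$, Lemma~\ref{lemma noyau ker} gives $E^{s}_{\Phi_{2}}(\Psi(x_{0}))\oplus E^{u}_{\Phi_{2}}(\Psi(x_{0}))=\ker\alpha_{2}(\Psi(x_{0}))$, so the first and third terms vanish, and condition \eqref{condition un} ($\imath_{X_{2}}\alpha_{2}|_{\Lambda_{2}}\equiv1$) makes the middle term equal to $c$. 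The same two properties for $\alpha_{1}$ yield $\alpha_{1}(x_{0})(v)=c$, whence $\Psi^{*}\alpha_{2}(x_{0})(v)=\alpha_{1}(x_{0})(v)$ for all $v$, i.e.\ $\Psi^{*}\alpha_{2}|_{\Lambda_{1}}=\alpha_{1}|_{\Lambda_{1}}$. (Alternatively, one could differentiate the equality of areas of quadrilaterals from Corollary~\ref{egalite aires} to obtain $\Psi^{*}d\alpha_{2}=d\alpha_{1}$ on the contact planes and conclude from there, but the route through $D\Psi$ is more direct.) The only step deserving care is the first one: knowing that the three partial derivatives genuinely patch into a linear $D\Psi(x_{0})$ that is block-diagonal — this is exactly what Corollary~\ref{cons res}, via the Nicol–Török form of Journé's lemma, guarantees; after that, everything reduces to the defining Reeb conditions of an adapted contact form, and it is precisely the time-preserving (iso-length-spectral) nature of $\Psi$ that prevents a time-reparametrization from spoiling the identity $D\Psi(x_{0})X_{1}(x_{0})=X_{2}(\Psi(x_{0}))$.
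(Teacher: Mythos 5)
Your proof is correct and follows essentially the same route as the paper: differentiate the flow conjugacy to get $D\Psi\,X_1 = X_2\circ\Psi$ and $D\Psi\,E^{s/u}_{\Phi_1}=E^{s/u}_{\Phi_2}\circ\Psi$, then combine Lemma~\ref{lemma noyau ker} with the Reeb condition $\imath_X\alpha|_\Lambda\equiv1$ on the decomposition $v=v^s+cX_1+v^u$ to reduce both sides to $c$. The extra remarks about Whitney regularity and the Nicol--T\"or\"ok lemma are accurate context but not strictly needed beyond what the paper invokes.
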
 

\begin{proof}
	By Lemma \ref{lemma noyau ker}, for $i=1,2$, and for any $x_i \in \Lambda_i$, it holds 
	\begin{equation*}
	E_{\Phi_i}^s(x_i)\oplus E_{\Phi_i}^u(x_i)= \ker \alpha(x_i). 
	\end{equation*}
	Recall that $\Psi$ is a flow conjugacy, i.e., 
	\begin{equation}\label{conj flots}
	\Psi \circ \Phi_1^t(x_1)=\Phi_2^t \circ \Psi(x_1),\quad \forall\, t \in \R,\, x_1\in \Lambda_1.
	\end{equation}
	Therefore, for $*=s,u$, it holds 
	$$
	D\Psi(x_1)E_{\Phi_1}^*(x_1)=E_{\Phi_2}^*(\Psi(x_1)).
	$$
	In particular, 
	$
	\ker \Psi^* \alpha_1(x_1)=\ker \alpha_2(\Psi(x_1))
	$. 
	Moreover, differentiating \eqref{conj flots} with respect to $t$, we obtain $D \Psi (x_1) X_1(x_1)=X_2(\Psi(x_1))$. 
	
	Let us show how this implies the result. We want to show that for any $x\in \Lambda_1$, it holds $\Psi^* \alpha_2(x) =\alpha_1(x)$. For any $v \in T_x M_1$, we decompose it as $v=v^s+v^u + c X_1(x)$, with $v^s\in E_{\Phi_1}^s(x)$, $v^u\in E_{\Phi_1}^u(x)$,  $c \in \R$. We obtain 
	\begin{align*}
	\Psi^* \alpha_2(x)(v)&=\alpha_2(\Psi(x))\big(D\Psi(x)v^s+D\Psi(x)v^u+c D\Psi(x)X_1(x)\big)\\
	&=c\alpha_2(\Psi(x))(D\Psi(x)X_1(x))=c\alpha_2(\Psi(x))(X_2(\Psi(x)))\\
	&=c\imath_{X_2}\alpha_2(\Psi(x))=c=c\imath_{X_1}\alpha_1(x)\\
	&=c\alpha_1(x)(X_1(x))=\alpha_1(x)(v),
	\end{align*}
	which concludes.
\end{proof}

Together with Proposition \ref{upgrading orbit eq} and Corollary \ref{cons res}, this concludes the proof of Theorem \ref{thm princ dyn}.

\section{Smooth conjugacy of billiard maps of hyperbolic billiards}\label{section rig bill} 

 
In the following, we give the proof of Theorem \ref{theorem main billi}. 
Let us consider two billiards $\mathcal{D}_1,\mathcal{D}_2$ with $\mathcal{C}^k$ boundaries, $k\geq 3$, that are iso-length-spectral on two basic sets $\Lambda_1^{\tau_1}$, $\Lambda_2^{\tau_2}$. For $i=1,2$, we denote by $\Phi_i$, resp. $\mathcal{F}_i$, the associated billiard flow, resp. billiard map. Recall that $\Phi_i$ preserves the contact form $\alpha_i:=\lambda_i+dt_i$, where $\lambda_i:=-r_i ds_i$ is the Liouville one-form, and that $\mathcal{F}_i$ preserves the symplectic form $ds_i \wedge dr_i$. 
We let $\Lambda_1,\Lambda_2$ be the respective projections of $\Lambda_1^{\tau_1},\Lambda_2^{\tau_2}$ onto the first two coordinates, i.e.,
\begin{equation}\label{projection lambda i}
\Lambda_i:=\{(s_i,r_i):(s_i,r_i,t_i)\in \Lambda_i^{\tau_i}\text{ for some }t_i \in \R\},\quad i=1,2.
\end{equation} 
By Proposition \ref{upgrading orbit eq}, there exists a flow conjugacy $\widetilde{\Psi}\colon (s_1,r_1,t_1)\mapsto (s_2,r_2,t_2)$ between the billiard flows $\Phi_1|_{\Lambda_1^{\tau_1}}$ and $\Phi_2|_{\Lambda_2^{\tau_2}}$. 
The map $\widetilde{\Psi}$ induces a conjugacy 
$\Psi\colon (s_1,r_1) \mapsto (s_2,r_2)$ between the billiard maps $\mathcal{F}_1|_{\Lambda_1},\mathcal{F}_2|_{\Lambda_2}$. 

\begin{lemma}
	The conjugacy map $\Psi$ is $\mathcal{C}^{k-1}$ in Whitney sense.
\end{lemma}

\begin{proof}
We argue as in Proposition \ref{differnti}, Corollary \ref{cons res} and Proposition \ref{prop amel reg}. The main point is that since the flows $\Phi_1,\Phi_2$ have the same periodic data, quadrilaterals formed by points in $\Lambda_1^{\tau_1}$, $\Lambda_2^{\tau_2}$ in correspondence have the same areas. Let us give more details. Given a point $x_0^{(1)}\in \Lambda_1^{\tau_1}$, we consider a small quadrilateral  $\mathscr{Q}^{(1)}=(x_0^{(1)},x_1^{(1)},x_2^{(1)},x_3^{(1)})\in (\Lambda_1^{\tau_1})^4$ and the associated quadrilateral $\mathscr{Q}^{(2)}:=(x_0^{(2)},x_1^{(2)},x_2^{(2)},x_3^{(2)})\in (\Lambda_2^{\tau_2})^4$, with $x_j^{(2)}:=\widetilde{\Psi}(x_j^{(1)})$, for $j=0,\dots,3$. For $i=1,2$ and $j=0,\dots,3$, we denote by $(s_j^{(i)},r_j^{(i)},t_j^{(i)})$ the coordinates of the point $x_j^{(i)}$ and we let $\bar x_j^{(i)}:=(s_j^{(i)},r_j^{(i)})$ be the projection of $x_j^{(i)}$ on the first two coordinates. Since the flows $\Phi_1|_{\Lambda_1^{\tau_1}}$ and $\Phi_2|_{\Lambda_2^{\tau_2}}$ are conjugated, by Corollary \ref{egalite aires}, we have 
$$
\mathrm{Area}(\mathscr{Q}^{(1)})=\mathrm{Area}(\mathscr{Q}^{(2)}),
$$ 
where for $i=1,2$, $\mathrm{Area}(\mathscr{Q}^{(i)})$ is the area of the region bounded by the points $(\bar x_0^{(i)},\bar x_1^{(i)},\bar x_2^{(i)},\bar x_3^{(i)})\in \Lambda_i^4$. Considering smaller and smaller quadrilaterals, and arguing as in Proposition \ref{differnti}, we deduce that the map $\Psi$ is $\mathcal{C}^{1,\beta}$ in Whitney sense at $x_0^{(1)}$ along stable and unstable leaves, for some $\beta>0$. It follows from Corollary \ref{cons res} that $\Psi$ is $\mathcal{C}^{1,\beta}$ in Whitney sense on $\Lambda_1$. As $\mathcal{F}_1,\mathcal{F}_2$ are $\mathcal{C}^{k-1}$, arguing as in Proposition \ref{prop amel reg}, we can upgrade the regularity and show that $\Psi$ is actually $\mathcal{C}^{k-1}$ in Whitney sense on $\Lambda_1$. 
\end{proof}
Recall that for $i=1,2$, we denote by $\tau_i(s_i,r_i)=h_i(s_i,s_i')>0$ the length of the segment between consecutive bounces $(s_i,r_i)\in \Lambda_i$ and $(s_i',r_i')=\mathcal{F}_i(s_i,r_i)\in \Lambda_i$, so that $\mathcal{F}_i^* \lambda_i-\lambda_i=d\tau_i$. 
By the fact that $\mathcal{D}_1,\mathcal{D}_2$ have the same periodic length data on $\Lambda_1$ and $\Lambda_2$, it follows from Livsic's theorem that the restriction of $\tau_2 \circ \Psi - \tau_1$ to $\Lambda_1$ is a coboundary, i.e., for some continuous function $\chi \colon \Lambda_1 \to \R$, we have 
\begin{equation}\label{def chichi}
\tau_2 \circ \Psi - \tau_1 = \chi\circ \mathcal{F}_1 - \chi\quad \text{on }\Lambda_1. 
\end{equation}
Actually, as $\Psi$ is $\mathcal{C}^{k-1}$ in Whitney sense, by the results of Nicol-T\"or\"ok \cite[Theorem 3.2]{Nictamere}, the function $\chi$ is also $\mathcal{C}^{k-1}$ in Whitney sense. 

\begin{lemma}\label{lemmeliou}
	It holds  
	\begin{equation}\label{eq autre form chi}
	\Psi^* \lambda_2-\lambda_1 = d\chi\quad \text{on }\Lambda_1.
	\end{equation}
	In particular, by differentiating \eqref{eq autre form chi}, it holds $\Psi^* (ds_2\wedge dr_2)=ds_1\wedge dr_1$ on $\Lambda_1$. 
\end{lemma}
\begin{proof}
	Since $\mathcal{F}_i^* \lambda_i-\lambda_i=d\tau_i$, for $i=1,2$, and as $\Psi\circ \mathcal{F}_1|_{\Lambda_1}=\mathcal{F}_2\circ \Psi|_{\Lambda_1}$, we deduce from \eqref{def chichi} that on $\Lambda_1$, 
	\begin{align*}
	\mathcal{F}_1^*\big(\Psi^* \lambda_2-\lambda_1 - d\chi\big)&=\Psi^*(\lambda_2+d\tau_2)-\lambda_1-d\tau_1-\mathcal{F}_1^*d\chi\\
	&=\Psi^* \lambda_2-\lambda_1+d(\tau_2\circ \Psi-\tau_1-\chi\circ \mathcal{F}_1)=\Psi^* \lambda_2-\lambda_1 - d\chi.
	\end{align*}
	Let $\varpi$ be the one-form $(\Psi^* \lambda_2-\lambda_1 - d\chi)|_{\Lambda_1}$. By the above identity, for any $q$-periodic point $p_1\in \Lambda_1$, $q\geq 2$, we have 
	\begin{equation}\label{form varpi}
	\varpi(p_1)=(\mathcal{F}_1^q)^*\varpi(p_1)=\varpi(p_1)\circ D\mathcal{F}_1^q(p_1). 
	\end{equation}
	By the hyperbolicity, we have a splitting $E_{\mathcal{F}_1}^s(p_1)\oplus E_{\mathcal{F}_1}^u(p_1)$ of the tangent space at $p_1$ into stable and unstable spaces. Let us choose a basis $(e^s(p_1),e^u(p_1))\in E_{\mathcal{F}_1}^s(p_1)\times E_{\mathcal{F}_1}^u(p_1)$. We denote by $(d\pi^s_1,d\pi^u_1)$ the dual basis, i.e., for any tangent vector $v$, $d\pi^s_1(v)$, resp. $d\pi^u_1(v)$ denotes the component of $v$ along $e^s(p_1)$, resp. $e^u(p_1)$. 
	In particular, there exist $\alpha^s(p_1),\alpha^u(p_1)\in \R$ such that 
	$$
	\varpi(p_1)=\alpha^s(p_1)\, d\pi^s_1 +\alpha^u(p_1)\, d\pi^u_1. 
	$$
	Letting $0<\mu(p_1)<1<\mu^{-1}(p_1)$ be the eigenvalues of $D\mathcal{F}_1^q(p_1)$, we thus have 
	$$
	\varpi(p_1)\circ D\mathcal{F}_1^{q}(p_1)=\mu(p_1)\alpha^s(p_1)\, d\pi^s_1 +\mu^{-1}(p_1)\alpha^u(p_1)\, d\pi^u_1.
	$$
	We deduce from \eqref{form varpi} that 
	$$
	\big(1-\mu(p_1)\big)\alpha^s(p_1)\, d\pi^s_1 +\big(1-\mu^{-1}(p_1)\big)\alpha^u(p_1)\, d\pi^u_1=0.
	$$
	As $\mu(p_1)\neq 1$, and since $d\pi^s_1,d\pi^u_1$ is a basis of the cotangent space at $p_1$, it follows that $\alpha^s(p_1)=\alpha^u(p_1)=0$. In other words, 
	$\varpi(p_1)=0$, for any periodic point $p_1\in \Lambda_1$. By the continuity of $\varpi$ on $\Lambda_1$, and since periodic points are dense in $\Lambda_1$, we deduce that $\varpi|_{\Lambda_1}=0$, as desired. 
\end{proof}


In order to complete the proof of  Theorem \ref{theorem main billi}, we still need to show \eqref{image temp rev}, which is done in the next subsection.

\subsection{Image of the time-reversal involution by the conjugacy
}\label{section symmetry ies}

We consider the same framework and keep the same notation as above. 
The conjugacy $\Psi$ is not unique, as we may pre-compose, resp. post-compose it with any fixed iterate of $\mathcal{F}_1$, resp. $\mathcal{F}_2$. Yet, in some cases, there is a canonical way to choose the conjugacy in such a way that it preserves the time-reversal symmetry of the billiard dynamics; this is what we discuss in this subsection. Recall that for $i=1,2$, $\mathcal{I}_i \colon (s_i,r_i) \mapsto (s_i,-r_i)$ is the time-reversal involution, so that $\mathcal{F}_i \circ \mathcal{I}_i = \mathcal{I}_i \circ \mathcal{F}_i^{-1}$. In the following, we investigate when it is actually possible to normalize the conjugacy such that it conjugates the time-reversal involutions of $\mathcal{F}_1$ and $\mathcal{F}_2$, i.e., 
\begin{equation}\label{preserv temps sym}
\Psi \circ \mathcal{I}_1=\mathcal{I}_2 \circ \Psi\quad \text{on }\Lambda_1. 
\end{equation}

Let us denote by $\hat{\mathcal{I}}_1:=\Psi^{-1}\circ \mathcal{I}_2 \circ \Psi|_{\Lambda_1}$ the image of $\mathcal{I}_2$ after conjugating by $\Psi$. Clearly, $\hat{\mathcal{I}}_1$ is involutive, and it conjugates $\mathcal{F}_1$ to its inverse $\mathcal{F}_1^{-1}$. In particular, the map $\Gamma:=\hat{\mathcal{I}}_1\circ \mathcal{I}_1$ belongs to the centralizer of the map $\mathcal{F}_1$ on the basic set $\Lambda_1$, i.e.,
$$
\Gamma \circ \mathcal{F}_1=\mathcal{F}_1\circ \Gamma\quad\text{on }\Lambda_1.
$$
The centralizer of Axiom A diffeomorphisms at basic pieces is typically trivial (see \cite{FisherT,RochaVarandas}), hence we expect  $\Gamma$ to be an iterate of $\mathcal{F}_1$. It is actually the case, by \cite[Theorem A]{RochaVarandas}, as long as the map $\Gamma$ fixes the orbits of $\mathcal{F}_1$, i.e., assuming that
\begin{equation}\label{orbit fix}
\forall\, x_1 \in \Lambda_1,\quad \Gamma(x_1)=\mathcal{F}_1^\ell(x_1),\quad \text{for some }\ell=\ell(x_1)\in \Z.
\end{equation}
Actually, we can prove directly:
\begin{lemma}
	If \eqref{orbit fix} holds, then there exists an integer $m \in \Z$ such that 
	\begin{equation}\label{conj inv}
	\mathcal{I}_2 \circ \Psi|_{\Lambda_1}=\Psi\circ \mathcal{I}_1\circ \mathcal{F}_1^m |_{\Lambda_1}. 
	\end{equation}
\end{lemma}

\begin{proof}
	Let $x_1\in \Lambda_1$, and take $\ell=\ell(x_1)\in \Z$ such that \eqref{orbit fix} holds for $x_1$. We have 
	\begin{align*}
	\Gamma(\mathcal{F}_1(x_1))&=\Psi^{-1}\circ \mathcal{I}_2 \circ \Psi\circ \mathcal{I}_1\circ \mathcal{F}_1(x_1)=\Psi^{-1}\circ \mathcal{I}_2 \circ \Psi\circ \mathcal{F}_1^{-1}\circ \mathcal{I}_1(x_1)=\dots=\\
	&=\mathcal{F}_1\circ\Psi^{-1}\circ \mathcal{I}_2 \circ \Psi\circ \mathcal{I}_1(x_1)=\mathcal{F}_1 \circ \Gamma(x_1)=\mathcal{F}_1^{\ell}(\mathcal{F}_1(x_1)),
	\end{align*}
	hence the integer $\ell$ in \eqref{orbit fix} is constant along the orbits. 
	As $\mathcal{F}_1|_{\Lambda_1}$ is transitive, considering $x_1\in \Lambda_1$ with a dense orbit, and by continuity, this finishes the proof. 
\end{proof}	

Besides, in the case where $\mathcal{D}_1,\mathcal{D}_2$ are open dispersing billiards, 
after changing the conjugacy, it is possible to verify \eqref{preserv temps sym}:
\begin{lemma}\label{point de per deux}
	If, furthermore, $\mathcal{F}_1,\mathcal{F}_2$ have a periodic point of period $2$ (in particular, when $\mathcal{D}_1,\mathcal{D}_2\in \mathbf{B}$), then, based on \eqref{conj inv}, we can redefine $\Psi$ so that \eqref{preserv temps sym} holds. 
\end{lemma}

\begin{proof}
	Let us show that the integer $m$ in	\eqref{conj inv} is even. Indeed, let $x_1$ be a periodic point of period $2$ for $\mathcal{F}_1$. Thus, both $\Psi(x_1)$ and $\mathcal{F}_1^m(x_1)$ are $2$-periodic, for $\mathcal{F}_2$ and $\mathcal{F}_1$ respectively. In particular, the point $\Psi(x_1)$, resp. $\mathcal{F}_1^m(x_1)$, is fixed under $\mathcal{I}_2$, resp. $\mathcal{I}_1$. Therefore, by \eqref{conj inv}, $\Psi(x_1)=\Psi(\mathcal{F}_1^m(x_1))$; by the injectivity of $\Psi$, we conclude that $\mathcal{F}_1^m(x_1)=x_1$, hence $m=2\ell$, for some $\ell\in \Z$. Let us consider the map $\widehat{\Psi}:=\Psi \circ \mathcal{F}_1^{-\ell}$. By \eqref{conj inv}, equation \eqref{preserv temps sym} is satisfied for $\widehat{\Psi}$ in place of $\Psi$, as 
	$$
	\mathcal{I}_2 \circ \widehat{\Psi}|_{\Lambda_1}=\Psi\circ \mathcal{I}_1\circ \mathcal{F}_1^{m-\ell}|_{\Lambda_1}=\Psi \circ \mathcal{F}_1^{\ell-m} \circ \mathcal{I}_1|_{\Lambda_1}=\widehat{\Psi}\circ \mathcal{I}_1|_{\Lambda_1}. 
	$$
	Besides, the map $\widehat{\Psi}$ still conjugates $\mathcal{F}_1|_{\Lambda_1}$ to $\mathcal{F}_2|_{\Lambda_2}$, and it is also $\mathcal{C}^{k-1}$ in Whitney sense, which concludes. 
\end{proof}

Alternatively, 
we have:
\begin{lemma}\label{on normailise}
	If there exists $x_1 \in \Lambda_1\cap \{r_1=0\}$ whose orbit is dense in $\Lambda_1$, such that $\mathcal{F}_2^m\circ \Psi(x_1) \in \{r_2=0\}$, for $m \in \Z$, then we can redefine $\Psi$ so that \eqref{preserv temps sym} holds. 
\end{lemma}

\begin{proof}
	Let us assume that there exists a point $x_1 \in \Lambda_1\cap \{r_1=0\}$ whose orbit is dense in $\Lambda_1$, and such that $\mathcal{F}_2^m\circ \Psi(x_1) \in \{r_2=0\}$ for some $m \in \Z$. Let us consider the map $\widehat{\Psi}:=\mathcal{F}_2^m\circ\Psi|_{\Lambda_1}=\Psi \circ \mathcal{F}_1^{m}|_{\Lambda_1}$.  For any integer $\ell \in \Z$, we have 
	\begin{align*}
	\mathcal{I}_2 \circ \widehat{\Psi}(\mathcal{F}_1^\ell(x_1))&= \mathcal{I}_2\circ \mathcal{F}_2^m  \circ \Psi\circ \mathcal{F}_1^{\ell} (x_1)=\dots=\mathcal{F}_2^{-\ell}\circ \mathcal{I}_2  \circ \mathcal{F}_2^m \circ \Psi  (x_1)\\
	&=\mathcal{F}_2^{-\ell}\circ \mathcal{F}_2^m \circ \Psi  (x_1)=\mathcal{F}_2^m \circ \Psi \circ \mathcal{F}_1^{-\ell} (x_1)= \widehat{\Psi} \circ \mathcal{I}_1(\mathcal{F}_1^\ell(x_1)).
	\end{align*}
	In other words, \eqref{preserv temps sym} is satisfied for $\widehat{\Psi}$ in place of $\Psi$ on the orbit of $x_1$; as the latter is dense, and by continuity, it is satisfied everywhere on $\Lambda_1$, which concludes. 
\end{proof}

\begin{remark}
	Note that if there exists a conjugacy map $\Psi$ which satisfies \eqref{preserv temps sym}, then it is unique in the following sense: if $\widehat\Psi$ is another conjugacy map which satisfies \eqref{preserv temps sym} and such that $\Psi^{-1}\circ \widehat\Psi$ fixes $\mathcal{F}_1$-orbits, 
	then $\widehat\Psi=\Psi$. Indeed, in this case, $\Psi^{-1}\circ \widehat\Psi$ commutes with $\mathcal{F}_1$; arguing as above, we see that it is equal to $\mathcal{F}_1^m$, for some $m \in \Z$. 
	Since $\Psi^{-1}\circ \widehat\Psi$ is also in the centralizer of $\mathcal{I}_1$, we deduce that $\mathcal{F}_1^m$ commutes with $\mathcal{I}_1$. But we also have $\mathcal{F}_1^m \circ \mathcal{I}_1=\mathcal{I}_1 \circ \mathcal{F}_1^{-m}$, and hence, $\mathcal{F}_1^{2m}=\mathrm{id}$ on $\Lambda_1$; in other words, each point in $\Lambda_1$ is $2m$-periodic. As $\mathcal{F}_1|_{\Lambda_1}$ is transitive, it is possible only if $m=0$, i.e., $\widehat\Psi=\Psi$ on $\Lambda_1$. 
\end{remark}

Assuming that \eqref{preserv temps sym} holds, we also have the following result.
\begin{lemma}\label{lemme chi impaire}
	The function $\chi$ in \eqref{def chichi} can be chosen such that $\chi \circ \mathcal{I}_1=-\chi$, i.e., 
	\begin{equation}\label{chi symmm}
	\chi(s_1,-r_1)=-\chi(s_1,r_1),\quad \forall\, (s_1,r_1)\in \Lambda_1. 
	\end{equation}
\end{lemma}

\begin{proof}
	For $i=1,2$, we have $\tau_i=\tau_i \circ \mathcal{I}_i\circ \mathcal{F}_i$ (see Figure \ref{Time rev}) and $\mathcal{F}_i \circ \mathcal{I}_i = \mathcal{I}_i \circ \mathcal{F}_i^{-1}$. Thus, by \eqref{preserv temps sym}, we deduce that on $\Lambda_1$, it holds 
	\begin{align*}
	\chi\circ \mathcal{F}_1 - \chi&=\tau_2 \circ \Psi - \tau_1=\tau_2 \circ \mathcal{I}_2 \circ \mathcal{F}_2 \circ \Psi - \tau_1\circ \mathcal{I}_1 \circ \mathcal{F}_1\\
	&=\tau_2 \circ \mathcal{I}_2 \circ \Psi \circ \mathcal{F}_1 - \tau_1\circ \mathcal{I}_1 \circ \mathcal{F}_1=\big(\tau_2 \circ \Psi - \tau_1 \big)\circ \mathcal{I}_1 \circ \mathcal{F}_1\\
	&=\chi\circ \mathcal{F}_1 \circ \mathcal{I}_1 \circ \mathcal{F}_1 - \chi\circ \mathcal{I}_1 \circ \mathcal{F}_1=\chi \circ \mathcal{I}_1 - \chi\circ \mathcal{I}_1\circ \mathcal{F}_1,
	\end{align*}
	hence 
	$$
	(\chi+\chi \circ \mathcal{I}_1)\circ \mathcal{F}_1=\chi+\chi \circ \mathcal{I}_1.
	$$
	Therefore, the function $\chi+\chi \circ \mathcal{I}_1$ on $\Lambda_1$ is $\mathcal{F}_1$-invariant, hence constant, as $\mathcal{F}_1|_{\Lambda_1}$ is transitive and $\chi$ is continuous. Since $\chi$ is defined up to constant (for any $c \in \R$, \eqref{def chichi} also holds for $\chi+c$ in place of $\chi$), we can assume that this constant vanishes, which concludes. 
\end{proof} 

In particular, for any point $x_1=(s_1,0)\in \Lambda_1\cap \{r_1=0\}$, \eqref{chi symmm} gives $\chi(x_1)=0$, while \eqref{eq autre form chi} gives $d\chi(x_1)=0$, as $\Psi(\Lambda_1\cap \{r_1=0\})=\Lambda_2 \cap \{r_2=0\}$. The proof of  Theorem \ref{theorem main billi} is complete.\\

\begin{figure}[h]
	\includegraphics[scale=0.9, trim=0 0.6cm 0 0.5cm, clip]{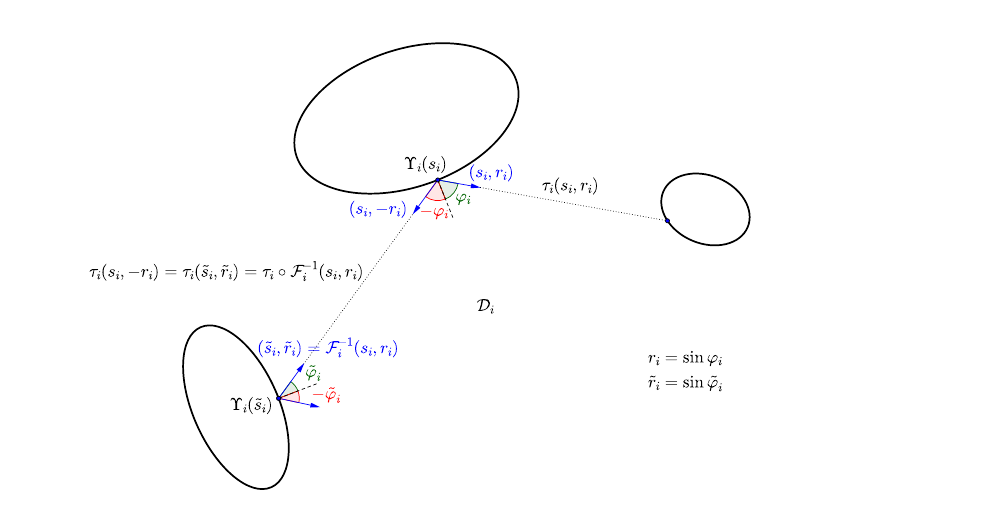}
	\caption{Time-reversal symmetry and generating functions.}\label{Time rev}
\end{figure}

Similarly, the conjugacy $\widetilde{\Psi}$ between the billiard flows $\Phi_1|_{\Lambda_1^{\tau_1}},\Phi_2|_{\Lambda_2^{\tau_2}}$ 
is not unique, as we can pre-, resp. post-compose it with any $\Phi_1^t$, resp. $\Phi_2^t$, $t \in \R$. Yet, there is also a canonical way to choose it, which we now explain. For $i=1,2$, we denote by $\widetilde{\mathcal{I}}_i\colon (x_i,y_i,\omega_i)\mapsto (x_i,y_i,\omega_i+\pi)$ the time reversal involution in $(x_i,y_i,\omega_i)$-coordinates. Let us for instance assume that for $i=1,2$, there exists $X_i\in \Lambda_i^{\tau_i}$ associated to a point on $\partial\mathcal{D}_i$ with a perpendicular bounce, whose orbit is dense, and such that $X_2$, $\widetilde{\Psi}(X_1)$ are in the same orbit. After time-translation, $\widetilde{\Psi}(X_1)=X_2$, and then, 
\begin{equation}\label{conj time inv flot}
\widetilde{\Psi} \circ \widetilde{\mathcal{I}}_1|_{\Lambda_1^{\tau_1}}=\widetilde{\mathcal{I}}_2\circ \widetilde{\Psi}|_{\Lambda_1^{\tau_1}}. 
\end{equation}
To show this, we argue as in Lemma \ref{on normailise}: indeed, 
as $\widetilde{\Psi}(X_1)=X_2$, 
we see that \eqref{conj time inv flot} holds on the orbit of $X_1$, hence everywhere, by the transitivity of $\Phi_1|_{\Lambda_1^{\tau_1}}$.\\ 
Although it is not clear \textit{a priori} that  $\widetilde{\Psi}$ sends points associated to bounces on $\partial\mathcal{D}_1$ to points associated to bounces on $\partial\mathcal{D}_2$, we will show that it is indeed the case when the point on $\partial\mathcal{D}_1$ has a perpendicular bounce. For $i=1,2$, we denote by $\widetilde{\Pi}_i\colon(x_i,y_i,\omega_i)\mapsto (x_i,y_i)$ the projection on the table $\mathcal{D}_i$. 
\begin{lemma}
Assume that \eqref{conj time inv flot} holds. Then, for any $Y_1\in \Lambda_1^{\tau_1}$ associated to a point $\widetilde{\Pi}_1(Y_1)\in \partial\mathcal{D}_1$ with a perpendicular bounce, 
its image $Y_2:=\widetilde{\Psi}(Y_1)\in \Lambda_2^{\tau_2}$ under $\widetilde{\Psi}$ is also associated to a point $\widetilde{\Pi}_2(Y_2)\in \partial\mathcal{D}_2$ with a perpendicular bounce on an obstacle. 
\end{lemma} 

\begin{proof}
	Let $Y_1$, $Y_2:=\widetilde{\Psi}(Y_1)$ be as in the lemma.
	As $Y_1$ has a perpendicular bounce, we have $\widetilde{\mathcal{I}}_1\circ \Phi_1^{-t}(Y_1)=\Phi_1^{t}(Y_1)$, for all $t \in \R$. Since $\widetilde{\Psi}$ conjugates $\Phi_1|_{\Lambda_1^{\tau_1}}$ to $\Phi_2|_{\Lambda_2^{\tau_2}}$, by \eqref{conj time inv flot}, and as 
	 $\widetilde{\Pi}_i\circ\widetilde{\mathcal{I}}_i=\widetilde{\Pi}_i$, $i=1,2$, we deduce that for any $t \in \R$, it holds 
	\begin{align*}
	\widetilde{\Pi}_2\circ \Phi_2^{-t}(Y_2)&=\widetilde{\Pi}_2\circ \widetilde{\mathcal{I}}_2\circ \widetilde{\Psi}\circ\Phi_1^{-t}(Y_1)=\widetilde{\Pi}_2\circ\widetilde{\Psi}\circ\widetilde{\mathcal{I}}_1\circ\Phi_1^{-t}(Y_1)\\
	&=\widetilde{\Pi}_2\circ\widetilde{\Psi}\circ\Phi_1^{t}(Y_1)=\widetilde{\Pi}_2\circ \Phi_2^{t}(Y_2). 
	\end{align*}
	But $\widetilde{\Pi}_2\circ \Phi_2^{-t}(Y_2)=\widetilde{\Pi}_2\circ \Phi_2^{t}(Y_2)$, for all $t \in \R$, if and only if $Y_2$ is associated to a point on $\partial \mathcal{D}_2$ with a perpendicular bounce, which concludes. 
\end{proof}

\subsection{Proof of Corollary \ref{spectral rig cor}}\label{section proof cor e}

As in Corollary \ref{spectral rig cor}, fix $\ell \geq 3$, and let $\mathcal{D}_1, \mathcal{D}_2\in \billiards_{ne}(\ell)$ with $\mathcal{C}^k$ boundaries, for some $k\geq 3$, such that $\mathcal{D}_1,\mathcal{D}_2$ have the same marked length spectrum. Then, according to Theorem \ref{theorem main billi}, the respective billiards maps $\mathcal{F}_1,\mathcal{F}_2$ are conjugated on $\Omega(\mathcal{F}_1),\Omega(\mathcal{F}_2)$ by a map $\Psi\colon \Omega(\mathcal{F}_1) \to \Omega(\mathcal{F}_2)$ that is $\mathcal{C}^{k-1}$ in Whitney sense and such that $\Psi^* (ds_2\wedge dr_2)=ds_1\wedge dr_1$ on $\Omega(\mathcal{F}_1)$. In the following, we let $\Omega_i:=\Omega(\mathcal{F}_i)$, for $i=1,2$. 

Let us recall that $\mathcal{F}_1|_{\Omega_1}$, $\mathcal{F}_2|_{\Omega_2}$ are conjugated to the same subshift of finite type on the alphabet $\mathscr{A}=\{1,\dots,\ell\}$ associated with the transition matrix $(1-\delta_{i,j})_{1 \leq i,j \leq \ell}$, where $\delta_{i,j}=1$, when $i=j$, and $\delta_{i,j}=0$ otherwise. We say that a word $\varsigma=(\varsigma_j)_{j}\in \mathscr{A}^\Z$ is \textit{admissible}, if 
$\varsigma_{j+1}\neq \varsigma_j$, for all $j \in \Z$. We also let $\text{Adm} \subset \cup_{j \geq 2} \mathscr{A}^j$ be the set of all finite words $\sigma=\sigma_1\dots\sigma_j$, $j \geq 2$, such that $\sigma^\infty:=\cdots\sigma\sigma\sigma\cdots \in \text{Adm}_{\infty}$. 
We normalize the conjugacy $\Psi$ by requiring that for each $y_1 \in \Omega_1$, the points $y_1$ and $\Psi(y_1)\in \Omega_2$ are coded by the same admissible word. Symbolically, the actions of $\mathcal{I}_1,\mathcal{I}_2$ amount to switching the symbolic past and future. In particular, by our choice that $\Psi$ preserves the symbolic coding, we have $\Psi \circ \mathcal{I}_1 = \mathcal{I}_2 \circ \Psi$ on $\Omega_1$, where $\mathcal{I}_i \colon (s_i,r_i)\mapsto (s_i,-r_i)$ is the time-reversal involution, for $i=1,2$. 

Due to the equality of the marked length spectra, the respective  generating functions $\tau_1,\tau_2$ of $\mathcal{F}_1,\mathcal{F}_2$ satisfy
\begin{equation*}
\tau_2 \circ \Psi - \tau_1 = \chi\circ \mathcal{F}_1 - \chi\quad \text{on }\Omega_1,
\end{equation*}
for some coboundary $\chi\colon \Omega_1 \to \R$. Arguing as above, we see that $\chi$ is $\mathcal{C}^{k-1}$ in Whitney sense; moreover, properties 
\eqref{liouville im}-\eqref{image temp rev} about $\chi$ follow from the previous results (see Lemma \ref{lemmeliou} and Lemma \ref{lemme chi impaire}), which concludes the proof of Corollary \ref{spectral rig cor}. 

\bibliographystyle{siam}
\bibliography{Bibliography}

\begin{thebibliography}{10}

\bibitem{ArnaudMC}
{\sc M.-C. Arnaud}, {\em The 2-link periodic orbits which maximize or minimize
  the length of a {$p$}-dimensional {B}irkhoff billiard are hyperbolic},
  Nonlinearity, 15 (2002), pp.~1755--1758.

\bibitem{Arn}
{\sc V.~I. Arnold}, {\em Small denominators. {I}. {M}apping the circle onto
  itself}, Izv. Akad. Nauk SSSR Ser. Mat., 25 (1961), pp.~21--86.

\bibitem{DeSAyub}
{\sc S.~Ayub and J.~D. Simoi}, {\em Numerical evidence of dynamical spectral
  rigidity of ellipses among smooth $\mathbb{Z}_2$-symmetric domains}, 2020.

\bibitem{BFH}
{\sc L.~Bakker, T.~Fisher, and B.~Hasselblatt}, {\em Centralizers of hyperbolic
  and kinematic-expansive flows}, 2019.
\newblock arXiv:1903.10948.

\bibitem{BedFish}
{\sc T.~Bedford and A.~M. Fisher}, {\em Ratio geometry, rigidity and the
  scenery process for hyperbolic {C}antor sets}, Ergodic Theory Dynam. Systems,
  17 (1997), pp.~531--564.

\bibitem{Bowen75}
{\sc R.~Bowen}, {\em Equilibrium states and the ergodic theory of {A}nosov
  diffeomorphisms}, Lecture Notes in Mathematics, Vol. 470, Springer-Verlag,
  Berlin-New York, 1975.

\bibitem{ChenErchGogol}
{\sc D.~Chen, A.~Erchenko, and A.~Gogolev}, {\em Riemannian {A}nosov extension
  and applications}.
\newblock arXiv:2009.13665, 2020.

\bibitem{KalChenZh}
{\sc J.~Chen, V.~Kaloshin, and H.-K. Zhang}, {\em Length spectrum rigidity for
  piecewise analytic bunimovich billiards}, 2020.

\bibitem{Chernov}
{\sc N.~Chernov}, {\em Invariant measures for hyperbolic dynamical systems}, in
  Handbook of dynamical systems, {V}ol. 1{A}, North-Holland, Amsterdam, 2002,
  pp.~321--407.

\bibitem{ChernovMarkarian}
{\sc N.~Chernov and R.~Markarian}, {\em Chaotic billiards}, vol.~127 of
  Mathematical Surveys and Monographs, American Mathematical Society,
  Providence, RI, 2006.

\bibitem{Clim}
{\sc V.~Climenhaga}, {\em {S}{R}{B} and equilibrium measures via dimension
  theory}.
\newblock arXiv:2009.09260, 2020.

\bibitem{DeVer}
{\sc Y.~Colin~de Verdi\`ere}, {\em Sur les longueurs des trajectoires
  p\'{e}riodiques d'un billard}, in South {R}hone seminar on geometry, {III}
  ({L}yon, 1983), Travaux en Cours, Hermann, Paris, 1984, pp.~122--139.

\bibitem{Croke}
{\sc C.~B. Croke}, {\em Rigidity for surfaces of nonpositive curvature},
  Comment. Math. Helv., 65 (1990), pp.~150--169.

\bibitem{CrSh}
{\sc C.~B. Croke and V.~A. Sharafutdinov}, {\em Spectral rigidity of a compact
  negatively curved manifold}, Topology, 37 (1998), pp.~1265--1273.

\bibitem{InvII}
{\sc R.~de~la Llave}, {\em Invariants for smooth conjugacy of hyperbolic
  dynamical systems. {II}}, Comm. Math. Phys., 109 (1987), pp.~369--378.

\bibitem{deLL}
{\sc R.~de~la Llave}, {\em Smooth conjugacy and {S}-{R}-{B} measures for
  uniformly and non-uniformly hyperbolic systems}, Comm. Math. Phys., 150
  (1992), pp.~289--320.

\bibitem{LMM1}
{\sc R.~de~la Llave, J.~M. Marco, and R.~Moriy\'{o}n}, {\em Canonical
  perturbation theory of {A}nosov systems, and regularity results for the
  {L}ivsic cohomology equation}, Bull. Amer. Math. Soc. (N.S.), 12 (1985),
  pp.~91--94.

\bibitem{InvIV}
{\sc R.~de~la Llave and R.~Moriy\'{o}n}, {\em Invariants for smooth conjugacy
  of hyperbolic dynamical systems. {IV}}, Comm. Math. Phys., 116 (1988),
  pp.~185--192.

\bibitem{DKL}
{\sc J.~De~Simoi, V.~Kaloshin, and M.~Leguil}, {\em {M}arked {L}ength
  {S}pectral determination of analytic chaotic billiards with axial
  symmetries}, 2019.
\newblock arXiv:1905.00890.

\bibitem{dSKW}
{\sc J.~de~Simoi, V.~Kaloshin, and Q.~Wei}, {\em Dynamical spectral rigidity
  among {$\Bbb Z_2$}-symmetric strictly convex domains close to a circle}, Ann.
  of Math. (2), 186 (2017), pp.~277--314.
\newblock Appendix B coauthored with H. Hezari.

\bibitem{FelOrn}
{\sc J.~Feldman and D.~Ornstein}, {\em Semirigidity of horocycle flows over
  compact surfaces of variable negative curvature}, Ergodic Theory Dynam.
  Systems, 7 (1987), pp.~49--72.

\bibitem{FisherT}
{\sc T.~Fisher}, {\em Trivial centralizers for axiom {A} diffeomorphisms},
  Nonlinearity, 21 (2008), pp.~2505--2517.

\bibitem{GARi}
{\sc P.~Gaspard and S.~A. Rice}, {\em Scattering from a classically chaotic
  repellor}, J. Chem. Phys., 90 (1989), pp.~2225--2241.

\bibitem{RHG}
{\sc A.~Gogolev and F.~Rodriguez-Hertz}, {\em {S}mooth rigidity for very
  non-algebraic expanding maps}, 2019.
\newblock arXiv:1911.07751.

\bibitem{RHG2}
\leavevmode\vrule height 2pt depth -1.6pt width 23pt, {\em {A}belian {L}ivshits
  theorems and geometric applications}, 2020.
\newblock arXiv:2004.14431.

\bibitem{Guill}
{\sc C.~Guillarmou}, {\em Lens rigidity for manifolds with hyperbolic trapped
  sets}, J. Amer. Math. Soc., 30 (2017), pp.~561--599.

\bibitem{GuiLef}
{\sc C.~Guillarmou and T.~Lefeuvre}, {\em The marked length spectrum of
  {A}nosov manifolds}, Ann. of Math. (2), 190 (2019), pp.~321--344.

\bibitem{GuilKazh}
{\sc V.~Guillemin and D.~Kazhdan}, {\em Some inverse spectral results for
  negatively curved {$2$}-manifolds}, Topology, 19 (1980), pp.~301--312.

\bibitem{Her}
{\sc M.-R. Herman}, {\em Sur la conjugaison diff\'{e}rentiable des
  diff\'{e}omorphismes du cercle \`a des rotations}, Inst. Hautes \'{E}tudes
  Sci. Publ. Math.,  (1979), pp.~5--233.

\bibitem{HeZ1}
{\sc H.~Hezari and S.~Zelditch}, {\em {$C^\infty$} spectral rigidity of the
  ellipse}, Anal. PDE, 5 (2012), pp.~1105--1132.

\bibitem{HeZ2}
{\sc H.~Hezari and S.~Zelditch}, {\em One can hear the shape of ellipses of
  small eccentricity}, 2019.

\bibitem{HeZ3}
\leavevmode\vrule height 2pt depth -1.6pt width 23pt, {\em Eigenfunction
  asymptotics and spectral rigidity of the ellipse}, 2020.

\bibitem{HeZ4}
\leavevmode\vrule height 2pt depth -1.6pt width 23pt, {\em Centrally symmetric
  analytic plane domains are spectrally determined in this class}, 2021.

\bibitem{HKS}
{\sc G.~Huang, V.~Kaloshin, and A.~Sorrentino}, {\em On the marked length
  spectrum of generic strictly convex billiard tables}, Duke Math. J., 167
  (2018), pp.~175--209.

\bibitem{Jour}
{\sc J.-L. Journ\'{e}}, {\em A regularity lemma for functions of several
  variables}, Rev. Mat. Iberoamericana, 4 (1988), pp.~187--193.

\bibitem{KK}
{\sc V.~Kaloshin and C.~E. Koudjinan}, {\em {M}arvizi-{M}elrose invariants of
  smoothly conjugated {B}irkhoff {B}illiards},  (2020).
\newblock Preprint.

\bibitem{HaKa}
{\sc A.~Katok and B.~Hasselblatt}, {\em Introduction to the modern theory of
  dynamical systems}, vol.~54 of Encyclopedia of Mathematics and its
  Applications, Cambridge University Press, Cambridge, 1995.

\bibitem{KhaSin}
{\sc K.~M. Khanin and Y.~G. Sina\u{\i}}, {\em A new proof of {M}. {H}erman's
  theorem}, Comm. Math. Phys., 112 (1987), pp.~89--101.

\bibitem{Lefeu}
{\sc T.~Lefeuvre}, {\em On the s-injectivity of the x-ray transform on
  manifolds with hyperbolic trapped set}, Nonlinearity, 32 (2019),
  pp.~1275--1295.

\bibitem{LoMar}
{\sc A.~Lopes and R.~Markarian}, {\em Open billiards: invariant and
  conditionally invariant probabilities on {C}antor sets}, SIAM J. Appl. Math.,
  56 (1996), pp.~651--680.

\bibitem{MM}
{\sc J.~M. Marco and R.~Moriy\'{o}n}, {\em Invariants for smooth conjugacy of
  hyperbolic dynamical systems. {I}}, Comm. Math. Phys., 109 (1987),
  pp.~681--689.

\bibitem{InvIII}
\leavevmode\vrule height 2pt depth -1.6pt width 23pt, {\em Invariants for
  smooth conjugacy of hyperbolic dynamical systems. {III}}, Comm. Math. Phys.,
  112 (1987), pp.~317--333.

\bibitem{Manning}
{\sc H.~McCluskey and A.~Manning}, {\em Hausdorff dimension for horseshoes},
  Ergodic Theory Dynam. Systems, 3 (1983), pp.~251--260.

\bibitem{Morita1991}
{\sc T.~Morita}, {\em The symbolic representation of billiards without boundary
  condition}, Trans. Amer. Math. Soc., 325 (1991), pp.~819--828.

\bibitem{Morita2004}
\leavevmode\vrule height 2pt depth -1.6pt width 23pt, {\em Construction of
  {$K$}-stable foliations for two-dimensional dispersing billiards without
  eclipse}, J. Math. Soc. Japan, 56 (2004), pp.~803--831.

\bibitem{Morita2007}
\leavevmode\vrule height 2pt depth -1.6pt width 23pt, {\em Meromorphic
  extensions of a class of zeta functions for two-dimensional billiards without
  eclipse}, Tohoku Math. J. (2), 59 (2007), pp.~167--202.

\bibitem{Nictamere}
{\sc M.~Nicol and A.~T\"{o}r\"{o}k}, {\em Whitney regularity for solutions to
  the coboundary equation on {C}antor sets}, Math. Phys. Electron. J., 13
  (2007), pp.~Paper 6, 20.

\bibitem{STNO}
{\sc L.~Noakes and L.~Stoyanov}, {\em Rigidity of scattering lengths and
  travelling times for disjoint unions of strictly convex bodies}, Proc. Amer.
  Math. Soc., 143 (2015), pp.~3879--3893.

\bibitem{NoST}
\leavevmode\vrule height 2pt depth -1.6pt width 23pt, {\em Travelling times in
  scattering by obstacles}, J. Math. Anal. Appl., 430 (2015), pp.~703--717.

\bibitem{NoSto}
\leavevmode\vrule height 2pt depth -1.6pt width 23pt, {\em Lens rigidity in
  scattering by unions of strictly convex bodies in {$\Bbb R^2$}}, SIAM J.
  Math. Anal., 52 (2020), pp.~471--480.

\bibitem{Otal}
{\sc J.-P. Otal}, {\em Le spectre marqu\'{e} des longueurs des surfaces \`a
  courbure n\'{e}gative}, Ann. of Math. (2), 131 (1990), pp.~151--162.

\bibitem{PaternainSU}
{\sc G.~P. Paternain, M.~Salo, and G.~Uhlmann}, {\em Spectral rigidity and
  invariant distributions on {A}nosov surfaces}, J. Differential Geom., 98
  (2014), pp.~147--181.

\bibitem{Pesinlecochino}
{\sc Y.~B. Pesin}, {\em Dimension theory in dynamical systems}, Chicago
  Lectures in Mathematics, University of Chicago Press, Chicago, IL, 1997.
\newblock Contemporary views and applications.

\bibitem{PetSto2}
{\sc V.~M. Petkov and L.~N. Stoyanov}, {\em Geometry of the generalized
  geodesic flow and inverse spectral problems}, John Wiley \& Sons, Ltd.,
  Chichester, second~ed., 2017.

\bibitem{PinRan}
{\sc A.~A. Pinto and D.~A. Rand}, {\em Smoothness of holonomies for codimension
  1 hyperbolic dynamics}, Bull. London Math. Soc., 34 (2002), pp.~341--352.

\bibitem{PRrig}
\leavevmode\vrule height 2pt depth -1.6pt width 23pt, {\em Rigidity of
  hyperbolic sets on surfaces}, J. London Math. Soc. (2), 71 (2005),
  pp.~481--502.

\bibitem{PRarxiv}
\leavevmode\vrule height 2pt depth -1.6pt width 23pt, {\em Geometric measures
  for hyperbolic sets on surfaces}.
\newblock arXiv:math/0605402, 2006.

\bibitem{RochaVarandas}
{\sc J.~Rocha and P.~Varandas}, {\em The centralizer of {$C^r$}-generic
  diffeomorphisms at hyperbolic basic sets is trivial}, Proc. Amer. Math. Soc.,
  146 (2018), pp.~247--260.

\bibitem{Sto}
{\sc L.~Stoyanov}, {\em Non-integrability of open billiard flows and
  {D}olgopyat-type estimates}, Ergodic Theory Dynam. Systems, 32 (2012),
  pp.~295--313.

\bibitem{StoLu}
\leavevmode\vrule height 2pt depth -1.6pt width 23pt, {\em Lens rigidity in
  scattering by non-trapping obstacles}, Arch. Math. (Basel), 110 (2018),
  pp.~391--402.

\bibitem{Whitney}
{\sc H.~Whitney}, {\em Analytic extensions of differentiable functions defined
  in closed sets}, Trans. Amer. Math. Soc., 36 (1934), pp.~63--89.

\bibitem{Yoccoz}
{\sc J.-C. Yoccoz}, {\em Conjugaison diff\'{e}rentiable des
  diff\'{e}omorphismes du cercle dont le nombre de rotation v\'{e}rifie une
  condition diophantienne}, Ann. Sci. \'{E}cole Norm. Sup. (4), 17 (1984),
  pp.~333--359.

\bibitem{Zel}
{\sc S.~Zelditch}, {\em Inverse spectral problem for analytic domains. {II}.
  {$\Bbb Z_2$}-symmetric domains}, Ann. of Math. (2), 170 (2009), pp.~205--269.

\end{thebibliography}
\end{document}